\newcommand{\E}{\mathbb{E}}
    \newcommand{\Prb}{\mathbb{P}}
		\newcommand{\cA}{\mathcal{A}}
		\newcommand{\cO}{\mathcal{O}}
		\newcommand{\cE}{\mathcal{E}}
		\newcommand{\cF}{\mathcal{F}}
	\newcommand{\cB}{\mathcal{B}}
			\newcommand{\cP}{\mathcal{P}}
	\newcommand{\sR}{\mathbb{R}}
		\newcommand{\dis}{\mathrm{d}}
						\newcommand{\rS}{\mathrm{S}}
	\newcommand{\sN}{\mathbb{N}}
		\newcommand{\rB}{\mathrm{B}}
	\newcommand{\rL}{\mathrm{L}}
				\DeclareMathOperator{\Inf}{Inf}
				\DeclareMathOperator{\Var}{Var}
    \newcommand{\sZ}{\mathbb{Z}}
    \newcommand{\sC}{\mathscr{C}}
    \newcommand{\ep}{\varepsilon}
    \newcommand{\ind}{\mathbf{1}}
 \theoremstyle{plain}   
 \newtheorem{thm}{Theorem}[section]
\newtheorem{lem}[thm]{Lemma}
\newtheorem{cor}[thm]{Corollary}
\newtheorem{prop}[thm]{Proposition}
\newtheorem{rk}[thm]{Remark}
\newtheorem{claim}[thm]{Claim}
\numberwithin{equation}{section}
\title{Almost sharp sharpness for Poisson Boolean percolation}
\date{}
 \author{Barbara Dembin}
 \author{Vincent Tassion}
 \affil[1]{D-MATH, ETH Z\"urich,
    Switzerland.}
\begin{document}

\selectlanguage{english}
\maketitle
\begin{abstract} We consider Poisson Boolean percolation on $\sR^d$ with power-law distribution on the radius with a finite $d$-moment for $d\ge 2$. We prove that subcritical sharpness occurs for all but a countable number of power-law distributions. This extends the results of Duminil-Copin--Raoufi--Tassion \cite{DCRT} where subcritical sharpness is proved under the assumption that the radii distribution has a $5d-3$ finite moment. Our proofs techniques are different from \cite{DCRT}: we do not use randomized algorithm and rely on specific independence properties of Boolean percolation, inherited from the underlying Poisson process.

 
 We also prove supercritical sharpness for any distribution with a finite $d$-moment and the continuity of the critical parameter for the truncated distribution when the truncation goes to infinity.
\end{abstract}
\section{Introduction}
\paragraph{Overview}

Boolean percolation was introduced by Gilbert in \cite{gilbert} as a continuous version of  Bernoulli percolation, introduced by Broadbent and Hammersley \cite{BroadbentHammersley}.  We consider a Poisson point process of intensity $\lambda>0$ on $\sR^d$ and on each point, we center a ball of potentially random radius. In Boolean percolation we are interested in the connectivity properties of the occupied set: it is defined as the subset of $\mathbb R^d$ consisting of all the points covered by at least one ball. 


This model undergoes a phase transition in $\lambda$ for the existence of an unbounded connected component of balls. For $\lambda<\lambda_c$, all the connected components are bounded, and for $\lambda>\lambda_c$, there exists at least one unbounded connected component.  

When the radii of the balls are bounded, Boolean percolation exhibits a very similar behaviour as standard Bernoulli percolation. In contrast, when the law of the radii has fat tails, the existence of large balls creates long-range dependencies in the occupied set, leading to different behaviours, and proving them often requires new methods. This is particularly true for the subcritical  phase: When the radii are bounded, the connection probability between two fixed points of $\mathbb R^d$ decays exponentially fast in the distance between the two points: there is subcritical sharpness.  But this cannot be the case when the radii have subexponential tails, because the connection probability is always larger than the probability that the two points are covered by a large ball. Nevertheless, a notion of sharpness can be defined in this case. This was established in \cite{DCRT} under a moment assumption. In this paper, we extend the result to a minimal moment assumptions, but we restrict to a particular family of laws for the radii. We also establish  supercritical sharpness, by relying on standard methods for Bernoulli percolation.
\paragraph{Definition of the model}
Let us more formally define Boolean percolation. Let $d\geq 2$. Denote by $\|\cdot\|$ the $\ell_2$-norm on $\mathbb R^d$. For $r>0$ and $x\in\sR^d$, set \[\mathrm B^x_r:=\left\{y\in\sR^d:\,\|y-x\|\leq r\right\}\qquad\text{and}\qquad\partial  \mathrm B^x_r:=\left\{y\in\sR^d:\,\|y-x\|= r\right\}\] for the closed ball of radius $r$ centered at $x$ and its boundary. For short, we will write $\mathrm B_r$ for $\mathrm B_ r^0$.
For a subset $\eta$ of $ \sR^d\times\sR_+$, we define the occupied set associated to $\eta$ by
\[\cO(\eta):=\bigcup_{(z,r)\in\eta}\mathrm B_r^z.\]
Let $\mu$ be a measure on $\sR_+$ and $\lambda>0$. Let $\eta$ be a Poisson point process of intensity $\lambda\, dz\otimes \mu$ where $dz$ is the Lebesgue measure on $\sR^d$. Write $\Prb_{\lambda,\mu}$ for the law of $\eta$ and $\E_{\lambda,\mu}$ for the expectation under the law $\Prb_{\lambda,\mu}$. We will work with measures $\mu$ such that
\begin{align}\label{cond:mu}
\int_{\sR_+}t^dd\mu(t)<\infty.
\end{align}
This hypothesis is natural for the study of percolation properties of the occupied set $\mathcal O(\eta)$. Indeed, Hall proved in \cite{hall} that this condition is necessary and sufficient to avoid that all the space is covered. 
We say that two points $x$ and $y$ in $\sR^d$ are connected by $\eta$ if there exists a continuous path in $\cO (\eta)$ that joins $x$ to $y$. We say that two sets $A$ and $B$ are connected if there exists $x\in A$ and $y\in B$ such that $x$ and $y$ are connected by $\eta$. We denote by $\{A\longleftrightarrow B\}$ this event.

The phase transition of Boolean percolation is defined as follows. 
Define the critical parameter by
$$\lambda_c(\mu):=\sup\left\{\lambda\geq 0: \lim_{r\rightarrow \infty}\Prb_{\lambda,\mu}\left( 0\longleftrightarrow \partial \mathrm B_r\right)=0\right\}.$$
Under the minimal assumption \eqref{cond:mu}, Gou\'er\'e proved in \cite{gouere} that $0<\lambda_c(\mu)<\infty$.
\paragraph{Subcritical sharpness}


For Bernoulli percolation, the terminology "subcritical sharpness" refers to the exponential decay of the connection probabilities in the subcritical regime, see \cite{menshikov86,AizenmanBarsky87,D-CT}. For Boolean percolation with subexponential radii distribution, such exponential decay does not hold for any $\lambda>0$. In order to define subcritical sharpness, we introduce the following critical point (following Gouéré--Théret \cite{GouereTheret})
 \[\widehat \lambda_c(\mu):=\inf\left\{\lambda\geq 0:\,\inf_{r>0}\Prb_{\lambda,\mu}(\mathrm  B_r\longleftrightarrow\partial \mathrm B_{2r})>0\right\}.\]
In the regime $\lambda<\widehat{\lambda}_c(\mu)$, renormalization arguments apply and provide an accurate description of the model, see Gouéré,Gouéré--Théret \cite{gouere,GouereTheret}. For example, when $\mu$ has a subexponential tail, it was proved in \cite{DCRT} that the probability that $0$ is connected to distance $n$  is equivalent to the probability that the origin is covered by one ball intersecting $\partial B_n$. 
\emph{Subcritical sharpness} corresponds to coincidence of the two critical points $\widehat \lambda_c(\mu)=\lambda_c(\mu)$

 
 In the case of bounded radius, Boolean percolation behaves at a high level in a similar way as standard percolation. The subcritical sharpness is known, and there is exponential decay in the subcritical regime \cite{ziesche,DCRT, faggionatohlafo}. In the case of unbounded radius, the subcritical sharpness has been proved by Duminil-Copin--Raoufi--Tassion \cite{DCRT}, when the distribution has a $5d-3$ finite moment, using randomized algorithms and OSSS inequality (they made a discretization of the space-radius to be able to use the OSSS inequality). Later  Last--Pecatti--Yogeshwaran  \cite{OSSScontinuous}  proved a continuous version of the OSSS inequality \cite{OSSS} and obtained sharpness for a general class of models, but their result does not include all the Boolean percolation with the minimal assumption \eqref{cond:mu}.  A key idea in the OSSS approach is to control the probability that a given ball is revealed when certain  exploration algorithms are run. This method fails for fat tails, because the revealment of large balls by the randomized algorithm is too large, and new ideas are needed to bypass this obstacle.

 In the special case of $d=2$, the subcritical sharpness has been proved by Ahlberg--Tassion--Teixeira \cite{ATT18} under minimal assumption \eqref{cond:mu} using crucially the planarity (the proofs relies on Russo--Seymour--Welsh theory).
 

In this paper, we investigate the subcritical sharpness for distribution $\mu$ under the minimal assumption \eqref{cond:mu}. We restrict our study to the following family of power-law distributions.
For $\delta>0$, set 
\begin{equation*}
  \mu_\delta:=\frac{1}{r^{d+1+\delta}}\ind_{r\geq 1}dr.  
\end{equation*}
The main result of this paper is the following.
\begin{thm}[Almost sharp subcritical sharpness]\label{thm:main}There exists $\mathcal D\subset(0,\infty)$ at most countable such that for all $\delta \in (0,\infty)\setminus \mathcal D$, we have 
\begin{equation}\label{star3}
\lambda_c(\mu_\delta)=\widehat \lambda_c(\mu_\delta).
\end{equation}
\end{thm}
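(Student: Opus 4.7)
The plan is to reduce Theorem~\ref{thm:main} to the following \emph{comparison inequality}: for all $\delta>0$ and all $\epsilon>0$,
\begin{equation}\label{eq:comp}
\lambda_c(\mu_\delta)\;\leq\;\widehat\lambda_c(\mu_{\delta+\epsilon}).
\end{equation}
Since the reverse inequality $\widehat\lambda_c(\mu_\delta)\leq\lambda_c(\mu_\delta)$ is standard (any $\lambda>\widehat\lambda_c$ satisfies $\inf_r\Prb_{\lambda,\mu}(\mathrm B_r\leftrightarrow\partial\mathrm B_{2r})>0$, which by a straightforward renormalization forces the existence of an unbounded cluster), the theorem will follow from \eqref{eq:comp} together with a monotonicity/continuity argument.

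I would first establish that $\delta\mapsto\widehat\lambda_c(\mu_\delta)$ is non-decreasing. This is a direct consequence of Poisson thinning: for $\delta_1<\delta_2$ one has $\mu_{\delta_2}\leq\mu_{\delta_1}$ pointwise on $[1,\infty)$, since the density $r^{-(d+1+\delta)}$ is decreasing in $\delta$ for $r\geq 1$. Hence $\eta_{\delta_2}$ can be realised as an independent $r\mapsto r^{-(\delta_2-\delta_1)}$-thinning of $\eta_{\delta_1}$, and this coupling preserves all connections, giving $\widehat\lambda_c(\mu_{\delta_1})\leq\widehat\lambda_c(\mu_{\delta_2})$ (and similarly for $\lambda_c$). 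Being monotone, $\delta\mapsto\widehat\lambda_c(\mu_\delta)$ has at most countably many discontinuities; let $\mathcal D$ denote this set. For $\delta\notin\mathcal D$ the function is right-continuous at $\delta$, so letting $\epsilon\downarrow 0$ in \eqref{eq:comp} yields $\lambda_c(\mu_\delta)\leq\widehat\lambda_c(\mu_\delta)$, proving \eqref{star3}.

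The heart of the proof is therefore \eqref{eq:comp}, which should be interpreted as a \textbf{sprinkling} statement. Fix $\lambda>\widehat\lambda_c(\mu_{\delta+\epsilon})$, so that annulus crossings occur with probability bounded away from $0$ at every scale under $\Prb_{\lambda,\mu_{\delta+\epsilon}}$. Using the thinning coupling above, pick $\eta$ distributed as $\Prb_{\lambda,\mu_\delta}$ and let $\eta'\subseteq\eta$ be its $r^{-\epsilon}$-thinning, distributed as $\Prb_{\lambda,\mu_{\delta+\epsilon}}$. Since connections in $\eta'$ are a fortiori connections in $\eta$, the full process $\eta$ inherits the positive crossing probabilities at every scale, and the ``extra'' balls $\eta\setminus\eta'$ provide an independent reservoir of centres to be used for sprinkling. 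One can then run a multi-scale renormalization argument: at each scale $R$ one defines a good event on a box of side $R$ ensuring both the existence of many crossings of sub-annuli and the presence of sufficiently many extra balls (coming from $\eta\setminus\eta'$) to glue the crossings across shorter distances. Proving that goodness propagates from scale $R$ to scale $2R$ with probability tending to one then yields an infinite cluster, i.e.\ $\lambda\geq\lambda_c(\mu_\delta)$.

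The main obstacle will be step three, i.e.\ making the sprinkling/renormalization work in the presence of the heavy tails that precisely prevent classical renormalization from operating directly (and for which the OSSS approach of \cite{DCRT} failed below a $5d-3$ moment). The strategy — consistent with what is announced in the abstract — will be to isolate the contribution of very large balls (whose density is controlled by the $d$-moment condition and by the slack $\epsilon$, so that they are sparse at every scale) and to exploit the independence of the Poisson process on disjoint regions, together with the Mecke/Palm formulas, to obtain quantitative control on the gluing step without resorting to randomized algorithms. The fact that the comparison is loose by an arbitrarily small amount $\epsilon>0$ is precisely what allows the sprinkling probability to beat the long-range dependencies created by the heavy tails.
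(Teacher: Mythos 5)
Your high-level reduction is close in spirit to what the paper does: the paper also argues by contradiction, sprinkles in $\delta$, and uses countability of the discontinuities of a monotone function (they use $\delta\mapsto\lambda_c(\mu_\delta)$, you use $\delta\mapsto\widehat\lambda_c(\mu_\delta)$; the packaging is essentially equivalent). Before getting to the substance, note a logical slip in your aside: you justify the ``standard'' inequality $\widehat\lambda_c\leq\lambda_c$ by saying that positive annulus-crossing probabilities at all scales force percolation by ``a straightforward renormalization.'' Taken literally, that asserts $\lambda>\widehat\lambda_c\Rightarrow\lambda\geq\lambda_c$, which is precisely the content of the theorem you are trying to prove and is not at all standard for unbounded radii. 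The actual easy direction goes the other way: $\lambda>\lambda_c$ implies $\Prb_\lambda(0\leftrightarrow\infty)>0$, which bounds annulus crossings from below and yields $\lambda\geq\widehat\lambda_c$.

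The genuine gap is in the ``heart'' of your argument, the comparison inequality $\lambda_c(\mu_\delta)\leq\widehat\lambda_c(\mu_{\delta+\epsilon})$. You propose: start from $\lambda>\widehat\lambda_c(\mu_{\delta+\epsilon})$, couple $\eta\sim\mu_\delta$ with its thinning $\eta'\sim\mu_{\delta+\epsilon}$, and run a multi-scale renormalization using the extra balls $\eta\setminus\eta'$ to ``glue.'' But this misses the key obstacle that the paper isolates: renormalization/Grimmett--Marstrand needs the good event to hold with probability \emph{close to $1$} at some scale, whereas $\lambda>\widehat\lambda_c$ only gives crossing probabilities \emph{bounded away from $0$}. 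Bridging from $\Omega(1)$ (in fact the paper only gets $n^{-\alpha}$ for a refined event) to $1-o(1)$ is precisely the sharp-threshold step, which is the technical core of the paper. The paper explicitly shows (Section~4.1) that this amplification \emph{cannot} be obtained by mere sprinkling in $\lambda$: very large balls act as dictators, so crossing-type events do not have a sharp threshold in $\lambda$. The extra balls $\eta\setminus\eta'$ are exactly a sprinkling in $\delta$, but to convert that sprinkling into the needed amplification one must prove a Talagrand/hypercontractivity inequality for the continuous Poisson model (a substantial obstacle in its own right, since discretizing destroys the rotational symmetry used to bound influences). Your proposal also elides the paper's Step~1 (the $\varphi$-functional/correlation-length argument for the truncated process, which produces a ball-to-big-ball connection event with probability at least $n^{-\alpha}$ to seed the exploration) and treats the three distinct steps --- correlation length, sharp threshold, Grimmett--Marstrand dynamic exploration with a ball seed --- as a single unspecified ``renormalization,'' which is where the actual difficulty lies.
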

\begin{rk}
The set $\mathcal D$  is defined as the set of discontinuity points of $\delta\mapsto\lambda_c(\mu_\delta)$. By monotonicity, we know that $\mathcal D$ is at most countable, but we expect that $\lambda_c(\delta)$ is actually continuous in $\delta$, namely that $\mathcal D=\emptyset$, but we were not able to prove this. 
\end{rk}

\begin{rk}We call this result almost sharp sharpness, because it states that for the laws $\{\mu_\delta, \delta\}$, Boolean percolation satisfies subcritical sharpness under the minimal assumption \eqref{cond:mu} (except possibly for countably many $\delta$). In particular, it implies that for  every $d_0> d$, there exists a distribution with infinite $d_0$-moment for which subcritical sharpness holds. 
\end{rk}

Our proof techniques are very different from \cite{DCRT}, in particular we do not use randomized algorithm. We prove a Talagrand formula \cite{Talagrand} for Boolean percolation to prove a sharp threshold. There is a real difficulty in applying Talagrand formula in this context. The big balls play the role of a dictator so we need to increase their intensity a lot to prove a sharp threshold. Doing a small sprinkling in $\lambda$ is not sufficient to obtain a sharp threshold in that context.

\paragraph{Supercritical sharpness}
 In the supercritical regime of bond percolation, the key property to start a renormalization is the so-called local uniqueness: with high probability a large box contains a unique macroscopic cluster. If this property is true for every $p>p_c$, we say that the model has supercritical sharpness. The main step to prove local uniqueness is to prove percolation in slabs using Grimmett--Marstrand argument \cite{GM}. They proved that for any $p>p_c$, for $k$ large enough there exists an infinite connected component in the slab $S_k:=\sR^2 \times [-k,k]^{d-2}$. 
For Bernoulli  bond percolation on $\mathbb Z^d$, supercritical sharpness is more involved and delicate than its subcritical counterpart. In contrast, for Boolean percolation, subcritical appears more challenging than supercritical percolation: as discussed in the previous section, establishing subcritical sharpness requires new ideas, while for supercritical sharpness,  the proof follows essentially the same lines as in the case of bond percolation. Some small adaptations are needed, but the proof is rather simpler for Boolean than the lattice version, thanks to rotational symmetries specific to a continuous model. In this paper, we prove that supercritical sharpness holds for any distribution satisfying \eqref{cond:mu}.
We define
\[\lambda_c^{S_k}(\mu):=\sup\left\{\lambda\geq 0: \lim_{r\rightarrow\infty}\Prb_{\lambda,\mu}\left(0\stackrel {\cO(\eta\cap (S_k\times \sR_+))}{\longleftrightarrow }\partial\mathrm B_r\right)=0\right\}.\]
The following theorem is proved using the seedless renormalization scheme of \cite{DCKT}, which is a refinement of the original argument \cite{GM}.
\begin{thm}\label{thm:main2}Let $\mu$ be a distribution that satisfies \eqref{cond:mu}. We have
\[\inf_{n\ge 1}\lambda_c^{S_n}(\mu|_{[0,n]})=\lambda_c(\mu)\,\]
where $\mu|_{[0,n]}$ is the restriction of the distribution on the interval $[0,n]$.
\end{thm}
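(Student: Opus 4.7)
The plan is to establish the two inequalities separately. The easy direction $\lambda_c(\mu) \le \inf_{n \ge 1} \lambda_c^{S_n}(\mu|_{[0,n]})$ follows from monotonicity: for each $n$, a Poisson process of intensity $\lambda\, dz \otimes \mu|_{[0,n]}$ intersected with $S_n \times \sR_+$ is a thinning of a Poisson process of intensity $\lambda\, dz \otimes \mu$ on $\sR^d \times \sR_+$, so any connection realised inside the truncated-slab occupied set is a fortiori a connection of the full occupied set. Consequently
\[
\Prb_{\lambda,\mu|_{[0,n]}}\!\left(0 \stackrel{\cO(\eta \cap (S_n \times \sR_+))}{\longleftrightarrow} \partial\mathrm B_r\right) \le \Prb_{\lambda,\mu}\!\left(0 \longleftrightarrow \partial \mathrm B_r\right),
\]
and the desired inequality on critical points follows from the definitions.

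For the reverse inequality I would fix $\lambda > \lambda_c(\mu)$ and construct, for some $n$, an infinite cluster inside the slab $S_n$ using only balls of radius at most $n$, via a sprinkling plus seedless renormalization argument in the spirit of \cite{DCKT}. Pick $\lambda_1 \in (\lambda_c(\mu), \lambda)$ and decompose the Poisson process at intensity $\lambda$ as the independent superposition $\eta_1 \cup \eta_2$ of two Poisson processes with intensities $\lambda_1$ and $\lambda - \lambda_1$ (for the measure $dz \otimes \mu$). Since $\lambda_1 > \lambda_c(\mu)$, the connection probabilities $\Prb_{\lambda_1,\mu}(0 \longleftrightarrow \partial \mathrm B_r)$ remain bounded below uniformly in $r$, so $\eta_1$ produces long crossings at every scale, while $\eta_2$ is reserved as sprinkling. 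I would then tile $S_n$ by mesoscopic boxes of side length $L$, declare a box \emph{good} when it contains, inside $S_n$ and using only balls of radius at most $n$, a cluster realising crossings compatible with those of its neighbours, and arrange the definition so that the good boxes dominate a 1-dependent site percolation on a slab of $\sZ^d$ which percolates as soon as the good-box probability is close enough to 1. A standard finite-size criterion then reduces the task to pushing this good-box probability arbitrarily close to 1 by choosing $L$, and then $n$, sufficiently large.

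The main obstacle is precisely this last step: the long crossings coming from $\eta_1$ may rely on balls of very large radius, which disappear when the radii are truncated at $n$, and the slab restriction additionally removes balls with centers far from $S_n$. The key input is the moment assumption $\int t^d\, d\mu(t) < \infty$, which forces the expected volume covered by balls of radius $>n$ to vanish as $n \to \infty$, so only a small volume of the $\eta_1$-crossings is destroyed on average by the truncation. The independent sprinkling $\eta_2$ then supplies additional small balls to reconnect the broken segments and to glue crossings of adjacent boxes into a single cluster, much as in the lattice Grimmett--Marstrand argument \cite{GM}; the rotational symmetry of Euclidean balls further simplifies this gluing step. Assembling these ingredients yields an infinite cluster in the slab-truncated occupied set at intensity $\lambda$, proving $\lambda_c^{S_n}(\mu|_{[0,n]}) \le \lambda$ for $n$ large enough, and taking the infimum over $n$ completes the proof.
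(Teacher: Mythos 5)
Your easy direction and your overall plan for the hard direction (truncate, split the intensity as $\lambda_1 + (\lambda - \lambda_1)$, and run a seedless Grimmett--Marstrand renormalization in the spirit of \cite{DCKT}) match the paper's approach, so the architecture is right. However, there is a genuine gap in how you propose to get the good-box probability close to $1$. In the supercritical phase the probability that a \emph{fixed} point reaches distance $L$ is only bounded below by $\theta(\lambda_1)>0$; it is not close to $1$, and it does not become close to $1$ just by taking $L$ (and then $n$) large. A static tiling where ``good'' boxes are boxes that are crossed, and a ``standard finite-size criterion,'' therefore does not close the argument directly: one needs to start from a \emph{seed} (a large connected set already present) and show that with high probability the seed extends into a neighbouring box, which is exactly why the paper runs a dynamic exploration as in \cite{DCKT} rather than a static renormalization. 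The corresponding statement is Lemma~\ref{lem:connectionset}, which bounds from below the probability that \emph{any} connected set $S\ni 0$ of diameter $\ge n$ reaches the face $F(N)$.

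The second missing ingredient is the control of two-arm events, which is the mechanism by which crossings of adjacent boxes are forced to glue. Your proposal attributes the gluing to rotational symmetry and to the sprinkling reconnecting broken segments, but that is not what the paper uses: the paper's condition~(c) in Proposition~\ref{prop:connectionset} requires $\Prb(A_2(k,K))$ and $\Prb(A_2(n,N))$ to be small, and this is obtained from uniqueness of the infinite cluster via Meester--Roy (Lemma~\ref{lem:twoarm}). Without this, the cluster emanating from your seed and the cluster emanating from the neighbouring box could each cross without meeting, and the renormalization would not produce a single infinite cluster. You correctly identify that the $d$-th moment assumption kills the contribution of large balls (this corresponds to Claim~\ref{claim:badballs}), but that is the comparatively easy part; the two substantive inputs above are what your sketch leaves out, and they are precisely where the work in Section~5 of the paper lies.
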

We deduce from this result the two following straightforward corollaries. We have supercritical sharpness for any distribution $\mu$.
\begin{cor}[Supercritical sharpness]\label{cor:1} Let $\mu$ be a distribution that satisfies \eqref{cond:mu}. We have
\[\lim_{k\rightarrow\infty}\lambda_c^{S_k}(\mu)=\lambda_c(\mu).\]
\end{cor}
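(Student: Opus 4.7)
The plan is to deduce the corollary from Theorem~\ref{thm:main2} using two elementary monotonicity properties of $\lambda_c^{S_k}$, combined with a coupling between the Poisson point processes associated to $\mu$ and $\mu|_{[0,n]}$. First I would check that $k\mapsto \lambda_c^{S_k}(\mu)$ is non-increasing: since $S_k\subset S_{k'}$ for $k\le k'$, any connection realised through $\cO(\eta\cap(S_k\times\sR_+))$ remains a valid connection through $\cO(\eta\cap(S_{k'}\times\sR_+))$, so the set of subcritical intensities can only grow with $k$. Next, $\lambda_c^{S_k}(\mu)\ge \lambda_c(\mu)$ for every $k$, because connection in the slab is a stronger event than connection in the full space. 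These two facts guarantee that $\lim_{k\to\infty}\lambda_c^{S_k}(\mu)$ exists and is bounded below by $\lambda_c(\mu)$.

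For the matching upper bound, I would fix $\varepsilon>0$ and appeal to Theorem~\ref{thm:main2} to obtain an integer $n\ge 1$ with $\lambda_c^{S_n}(\mu|_{[0,n]})\le \lambda_c(\mu)+\varepsilon$. Since $\mu|_{[0,n]}$ is dominated by $\mu$ as a measure, a thinning coupling of the associated Poisson point processes shows that percolation in $S_k$ under $\mu|_{[0,n]}$ implies percolation in $S_k$ under $\mu$, hence $\lambda_c^{S_k}(\mu)\le \lambda_c^{S_k}(\mu|_{[0,n]})$ for every $k$. Combining this with the monotonicity in $k$ applied to the truncated measure, I obtain for all $k\ge n$
\[
\lambda_c^{S_k}(\mu)\;\le\;\lambda_c^{S_k}(\mu|_{[0,n]})\;\le\;\lambda_c^{S_n}(\mu|_{[0,n]})\;\le\;\lambda_c(\mu)+\varepsilon.
\]
Taking $k\to\infty$ and then $\varepsilon\to 0$ yields $\lim_{k\to\infty}\lambda_c^{S_k}(\mu)\le \lambda_c(\mu)$, which together with the lower bound concludes the proof.

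There is no substantive obstacle here: the entire mathematical difficulty is already absorbed in Theorem~\ref{thm:main2}, and the corollary amounts to unfolding the infimum over $n$ into a limit in $k$, leveraging that enlarging the slab and enlarging the truncation both produce a richer process. The only point requiring (minor) care is the measure-monotonicity step, which I would justify by the standard representation of the Poisson point process of intensity $\lambda\,dz\otimes\mu$ as a superposition of independent processes with intensities $\lambda\,dz\otimes\mu|_{[0,n]}$ and $\lambda\,dz\otimes\mu|_{(n,\infty)}$.
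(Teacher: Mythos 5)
Your proof is correct and takes essentially the approach the paper intends: the paper states Corollary~\ref{cor:1} as a ``straightforward'' consequence of Theorem~\ref{thm:main2} without spelling out the details, and your argument supplies exactly the two monotonicity facts (in the slab width $k$ and in the measure, the latter via the standard superposition coupling) needed to turn the infimum over truncations into the stated limit in $k$.
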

The following corollary states that the function $\lambda_c$ is continuous with respect to the truncated distribution.
\begin{cor}[Continuity of $\lambda_c$ for truncated radius]\label{cor:2} Let $\mu$ be a distribution that satisfies \eqref{cond:mu}. We have
\begin{equation*}
    \lim_{n\rightarrow\infty}\lambda_c(\mu|_{[0,n]})=\lambda_c(\mu).
\end{equation*}
\end{cor}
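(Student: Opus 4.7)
The plan is to deduce Corollary \ref{cor:2} as a direct consequence of Theorem \ref{thm:main2}, combined with the elementary monotonicity of the model in the radii distribution and the trivial comparison between slab and full-space percolation.

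First I would observe that by a standard thinning of the Poisson point process (keeping only those atoms $(z,r)$ with $r\le n$), one obtains a natural coupling under which the occupied set associated with $\mu|_{[0,n]}$ at intensity $\lambda$ is contained in the occupied set associated with $\mu|_{[0,n+1]}$ at the same intensity, which in turn is contained in the one associated with $\mu$. This monotonicity immediately yields
\begin{equation*}
\lambda_c(\mu|_{[0,n]})\ge \lambda_c(\mu|_{[0,n+1]})\ge \lambda_c(\mu)\qquad \text{for all } n\ge 1,
\end{equation*}
so that $\ell:=\lim_{n\to\infty}\lambda_c(\mu|_{[0,n]})=\inf_{n\ge 1}\lambda_c(\mu|_{[0,n]})$ exists and satisfies $\ell\ge \lambda_c(\mu)$.

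For the converse inequality, I would use the obvious observation that any connection realized inside the slab $S_n$ is \emph{a fortiori} a connection in the full space $\sR^d$, so that for every $n\ge 1$ and every distribution $\nu$ one has $\lambda_c(\nu)\le \lambda_c^{S_n}(\nu)$. Applying this with $\nu=\mu|_{[0,n]}$, taking the infimum over $n$, and invoking Theorem \ref{thm:main2} gives
\begin{equation*}
\inf_{n\ge 1}\lambda_c(\mu|_{[0,n]})\;\le\;\inf_{n\ge 1}\lambda_c^{S_n}(\mu|_{[0,n]})\;=\;\lambda_c(\mu).
\end{equation*}
Combined with the previous paragraph, this establishes $\ell=\lambda_c(\mu)$, which is exactly the conclusion of the corollary.

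In this approach there is essentially no obstacle left once Theorem \ref{thm:main2} is in hand: the whole renormalization content — namely, that slab percolation with radii truncated at level $n$ already captures the full-space critical point in the limit — is already encoded there. The corollary is purely a matter of comparing critical points: monotonicity in the measure gives one inequality, and the slab-versus-full-space comparison, together with the choice to truncate the radii at the same scale as the slab thickness, gives the other.
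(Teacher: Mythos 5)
Your proof is correct, and it is essentially the argument the paper has in mind when it calls the corollary a ``straightforward'' consequence of Theorem \ref{thm:main2}: monotonicity in the truncation gives $\lambda_c(\mu|_{[0,n]})\ge\lambda_c(\mu)$ and the existence of the (decreasing) limit, while the trivial slab-versus-full-space comparison $\lambda_c(\nu)\le\lambda_c^{S_n}(\nu)$ applied to $\nu=\mu|_{[0,n]}$ closes the gap via Theorem \ref{thm:main2}.
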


\paragraph{Key steps and inputs for the proof Theorem \ref{thm:main}.}
We proceed by contradiction and assume that there exists a non empty regime $\widehat\lambda_c(\delta)<\lambda_c(\delta)$. In other words, we assume the existence of a regime $\lambda\in (\widehat\lambda_c(\delta),\lambda_c(\delta))$ for which there is not infinite cluster but the crossing probability $\mathbb P_\lambda(\rB_n\longleftrightarrow \partial \rB_{2n})$ does not go to $0$ (along an infinite  subsequence of $n$). We consider $\lambda\in (\widehat\lambda_c(\delta),\lambda_c(\delta))$, and we prove that for arbitrarily small $\varepsilon >0$, the process percolates at parameter $\lambda+\varepsilon$, which is a contradiction to $\lambda<\lambda_c(\delta)$. We proceed in 3 steps:

\begin{description}
\item[Step 1: sharpness for truncated radii.] If $\lambda>\widehat \lambda_c$, the cluster of $\rB_n$  cannot be too small: at least for some scales, it must intersect $\partial \rB_{2n}$ with positive probability. By using a union bound, we can deduce a good lower bound on the probability of $\rB_n\longleftrightarrow\partial \rB_N$ for $N\gg n$. On the other hand, if $\lambda <\lambda_c$ we know that the set of balls of radius $\le n$ do not percolate. Furthermore, by the known sharpness for bounded radii, the connection probabilities for the truncated process (obtained be keeping only the balls of radius  $\le n$)  decay exponentially fast. Hence $\rB_n$ cannot be connected too far without using large balls, therefore the cluster of $\rB_n$ must contain balls of radius $\rB_r$ for $r\ge n$. We can make this idea quantitative by using the $\varphi_p(S)$ argument of \cite{D-CT} (developed in \cite{ziesche} for Boolean percolation): For arbitrary $\alpha>0$, we obtain,
\begin{equation}
    \mathbb P_\lambda(\mathcal E_n)\ge \frac 1 {n^{\alpha}},
\end{equation}
where $\cE_{n}:= \left\{\text{$\mathrm B_n$ is connected  to an open ball of radius larger than $n$}\right\}$ (see figure \ref{fig}). 
\item[Step 2: sharp threshold in $\delta$ via hypercontractivity.]  The event $\mathcal E_n$ is rotation invariant, and we expect such event to satisfy a sharp threshold. (A small increase in the parameter is sufficient to increase the probability close to one. For a long time in the project, we did not think that this part would be difficult, but we faced two obstacles. The first one is conceptual: there is no general sharp threshold in $\lambda$ (see Section \ref{sec:example} for an example that illustrates this fact). We overcome this difficulty by proving a sharp threshold in $\delta$ instead: when varying $\delta$, the density of large balls increases much faster than the density of the small ones, which prevents the large balls to play a "dictator" role on a full range of parameters. The second difficulty comes from the fact that we want to prove a sharp threshold for rotational invariant events. A natural strategy would be to discretize the space and the radius and apply a discrete Talagrand formula. However, there is a theoretical obstacle in pursuing this strategy: for dimensions $d\ge 3$, there is no rotational invariant way to discretize the space. This is a consequence of the fact that the group of orthogonal transformations is not abelian in dimension $d\ge 3$. Hence, discretizing the model implies losing all the symmetries of the continuous one. For this reason, we had to prove a continuous version of Talagrand's formula, by using a suitable encoding and limiting procedure. By applying this formula to the event $\mathcal E_n$, we obtain that
\begin{equation}
    \mathbb P_{\lambda,\delta-\eta}(\mathcal E_n)\ge 1-\frac 1{n^\alpha}.
\end{equation}

\item[Step 3: Grimmett--Marstrand dynamic exploration:] thanks to the previous step we know that the event $\cE_n$ occurs with high probability for the distribution $\mu_{\delta-\ep}$.  The open ball in the definition of the event $\cE_n$ plays the role of the seed in Grimmett--Marstrand dynamic renormalization. Up to a small sprinkling $\varepsilon$ in $\lambda$, we can prove that there exists an infinite connected component with positive probability. It follows that \begin{equation}\label{eq:contradiction}\lambda+\varepsilon>\lambda_c(\mu_{\delta-\eta}).\end{equation}

\item[Conclusion.] Equation~\eqref{eq:contradiction} is not exactly the contradiction we were aiming for because we had to reduce $\delta$, nevertheless we still reach a contradiction if $\delta$ is a continuity point of $\lambda_c$.
\end{description}



\begin{center}
\begin{figure}[H]
\def\svgwidth{0.15\textwidth}
\hspace{6cm}
 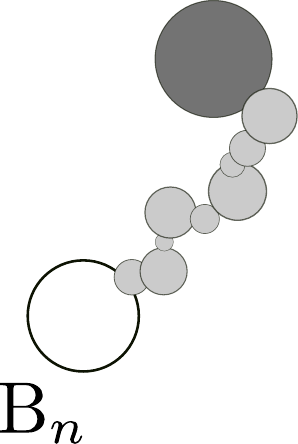
 \caption[figen]{\label{fig}Illustration of the event $\cE_{n}$}
\end{figure}
\end{center}

\paragraph{Organization of the paper} In the next section, we state the three key propositions corresponding to the three steps and prove Theorem \ref{thm:main}.  In Section \ref{sec:equalitylc}, we prove that in the fidicious regime, with not too small probability, balls are connected to larger open balls (this corresponds to Step 1). In Section \ref{sec:ST}, we prove a Talagrand formula for Boolean percolation and prove a sharp threshold result for rotational invariant events (this corresponds to Step 2 of the sketch of the proof). In Section \ref{sec:GM}, we prove that there is percolation up to a sprinkling in the intensity using a Grimmett--Marstrand dynamic exploration using big balls as seeds (this corresponds to Step 3). Finally, in Section \ref{sec:main2}, we prove Theorem \ref{thm:main2}.

\paragraph{Acknowledgements}
We thank Tobias Hartnick and Matthis Lehmk\"uhler for useful exchanges regarding discretization of the sphere. We thank Jean-Baptiste Gouéré for his careful reading and his useful comments.
This project has received funding from the European Research Council (ERC) under the European Union’s Horizon 2020 research and innovation program (grant agreement No
851565). 
\section{Proof of Theorem \ref{thm:main}}\label{sec:main}
In this section, we state the main propositions corresponding to the three steps of the sketch of proof and prove Theorem \ref{thm:main}.

For short, we will write $\lambda_c(\delta), \widehat\lambda_c(\delta)$ instead of $\lambda_c(\mu_\delta), \widehat\lambda_c(\mu_\delta)$ and $\mathbb P_{\lambda,\delta}$ instead of $\mathbb P_{\lambda,\mu_\delta}$.

For $\rho\ge 1$, we define
\[\cE_{n,N}(\rho):=\left\{\exists (x,r)\in \eta \cap((\mathrm B_{(\rho+1/2)N}\setminus \mathrm B_{\rho N})\times[n,+\infty)):  \mathrm B_n\stackrel{\cO(\eta\cap(\mathrm B_{(\rho+1/2)N} \times \sR))}{ \longleftrightarrow} \mathrm B_r^ x\right\}.\]
For technical reasons arising in the proof of Grimmett--Marstrand, we need this extra parameter $\rho$.
In the regime $(\widehat \lambda_c(\delta), \lambda_c(\delta))$, we can prove that there exists $n\le N$ such that for any $\rho \in[1,2d]$, the event $\cE_{n,N}(\rho)$ occurs with a not too small probability. This corresponds to Step 1 of the sketch of the proof (it is proved in Section \ref{sec:equalitylc}).
\begin{prop}\label{prop:seed}Let $\delta>0$. Assume that $\widehat\lambda_c(\delta)<\lambda_c(\delta)$. Let $\ep>0$ and $\lambda\in(\widehat \lambda_c(\delta),\lambda_c(\delta))$. For any $n_0\ge 1$, there exists  $n\ge n_0$ and $N\ge 2n $ such that 
\begin{equation*}
   \forall \rho \in[1,2d]\qquad\Prb_{\lambda,\delta}(\cE_{n,N}(\rho))\ge \frac{1}{n^ \ep}.
\end{equation*}
\end{prop}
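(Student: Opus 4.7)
I would exploit the fictitious gap $\lambda\in(\widehat\lambda_c(\delta),\lambda_c(\delta))$ by combining two opposite estimates. The lower side $\lambda>\widehat\lambda_c(\delta)$ provides $\mathbb P_{\lambda,\delta}(\mathrm B_r\leftrightarrow\partial\mathrm B_{2r})\geq c_0>0$ uniformly in $r\geq 1$. Iterating this positivity via FKG over dyadic annular crossings (inside boxes so as to preserve the relevant containment) should yield a polynomial ``one-arm in a box'' lower bound
\[
\mathbb P_{\lambda,\delta}\!\left(\mathrm B_n\stackrel{\eta\cap(\mathrm B_M\times\sR_+)}{\longleftrightarrow}\partial\mathrm B_M\right)\;\geq\;(n/M)^{\alpha},\qquad M\geq 2n,
\]
for some $\alpha=\alpha(\lambda,\delta)>0$. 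The upper side uses monotonicity $\lambda<\lambda_c(\mu_\delta)\leq\lambda_c(\mu_\delta|_{[0,n]})$ together with subcritical sharpness for bounded-radius Boolean percolation (the $\varphi_p(S)$-argument of \cite{D-CT} adapted to Boolean percolation by Ziesche) to give, writing $\eta^{\leq n}$ for the restriction of $\eta$ to Poisson points with radius $\leq n$,
\[
\mathbb P_{\lambda,\delta}\!\left(\mathrm B_n\stackrel{\eta^{\leq n}}{\longleftrightarrow}\partial\mathrm B_L\right)\;\leq\;C_n\,e^{-c_n L},
\]
for constants $c_n,C_n>0$ that may deteriorate in $n$.

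\textbf{Combining the estimates.} Given $\epsilon>0$ and $n_0\geq 1$, pick $n\geq n_0$ large and set $N=\lceil n^{1+\epsilon/(2\alpha)}\rceil$. Fix $\rho\in[1,2d]$ and apply the one-arm-in-a-box bound inside $\mathrm B_{(\rho+1/2)N}$: with probability at least $(n/((\rho+1/2)N))^{\alpha}\geq (2(2d+1))^{-\alpha}n^{-\epsilon/2}$, there is a continuous path from $\mathrm B_n$ to $\partial\mathrm B_{(\rho+1/2)N}$ using only Poisson points of $\eta$ with centers in $\mathrm B_{(\rho+1/2)N}$. Such a path necessarily crosses the target annulus $\{x:\rho N\leq\|x\|\leq(\rho+1/2)N\}$. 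I would then argue that, after absorbing the truncated-decay correction $C_n e^{-c_n N/4}$ (which is negligible once $n$ is large enough), at least one Poisson point contributing to this path has radius $\geq n$ and its center genuinely lies in the annulus: otherwise, either a stretch of length $\geq N/4$ would have to be covered purely by small balls (forbidden by the truncated exponential decay), or every crossing of the annulus would have to be sustained by very large balls centered inside $\mathrm B_{\rho N}$, which a Poisson counting argument using the $d$-moment condition $\int r^d\,d\mu_\delta(r)<\infty$ controls. This verifies $\mathcal E_{n,N}(\rho)$ with probability $\geq n^{-\epsilon}$, uniformly in $\rho\in[1,2d]$.

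\textbf{Main obstacle.} The most delicate point is the final geometric step: extracting a big ball whose \emph{center} truly lies in the prescribed annulus $[\rho N,(\rho+1/2)N]$, rather than merely an exceptionally large ball whose center lies elsewhere but whose reach happens to cover the annulus. The $d$-moment condition on $\mu_\delta$ is the right tool to suppress such exceptionally large balls, but the quantitative bookkeeping must be done with care, especially since the statement asks for the bound uniformly over all $\rho\in[1,2d]$. Two secondary subtleties are (i) turning the uniform positivity at each scale coming from $\widehat\lambda_c$ into a polynomial one-arm lower bound inside a box, which is standard for lattice models but requires some surgery in the Boolean setting to handle very large balls crossing the box boundary, and (ii) the tradeoff between the polynomial lower bound at scale $N$ and the $n$-dependent exponential decay constants $c_n,C_n$, which dictates that $N$ be polynomial in $n$ while $n$ must be taken arbitrarily large --- matching the freedom $n\geq n_0$ left open in the statement.
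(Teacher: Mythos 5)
The high-level architecture you describe --- a connectivity lower bound coming from $\lambda>\widehat\lambda_c(\delta)$, a truncated exponential decay coming from $\lambda<\lambda_c(\delta)$, and a geometric extraction of a large ball centred in the prescribed annulus --- does match the paper's proof, and your identification of the ``big ball with far-away centre'' issue as the last geometric obstacle is correct (the paper handles it with exactly the Poisson-counting bound you indicate). But the lower-bound half of your argument contains two genuine gaps, and they are precisely the places where the paper has to work.

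First, the FKG iteration over dyadic annular crossings does not yield a one-arm lower bound in dimension $d\ge3$: the events $\{\mathrm B_{2^jr}\longleftrightarrow\partial\mathrm B_{2^{j+1}r}\}$ may all occur with none of the crossing clusters meeting, so FKG bounds the probability of their intersection, not the probability that a single cluster spans from $\mathrm B_n$ to $\partial\mathrm B_M$. In $d=2$ one glues via RSW; in $d\ge3$ there is no analogue, so this step is not a ``surgery'' on a standard lattice argument --- it simply fails. The paper does not take this route at all. It works with the Ziesche/\cite{D-CT} functional $\varphi_\lambda^n$ and defines the truncated correlation length $\mathrm L_\lambda(n)$ as the first scale at which $\varphi_\lambda^n$ drops below $1/e$; the lower bound on connections at the good scale then comes \emph{directly} from property \ref{item:2} and is of order $1/\log n$, much stronger than polynomial. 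The eventual $n^{-\varepsilon'}$ in the proposition is just a union-bound loss over $O(n^{(d-1)\varepsilon})$ translates of $\mathrm B_n$ covering $\mathrm B_{n^{1+\varepsilon}}$, not an iteration loss.

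Second, and more consequentially, prescribing $N=\lceil n^{1+\epsilon/(2\alpha)}\rceil$ and asking the truncated decay $C_n e^{-c_n N/4}$ to be negligible silently assumes that $c_n\approx 1/\mathrm L_\lambda(n)$ does not decay faster than polynomially in $n$ --- i.e.\ that $\mathrm L_\lambda(n)$ is at most a power of $n$. Nothing in the hypotheses gives this. This is exactly the tension resolved by the paper's Lemma~\ref{lem:N}: assuming $\mathrm L_{\lambda-1/\log n}(n^{1+\varepsilon})\le\mathrm L_\lambda(n)^{1+\varepsilon/2}$ for all scales and all $\lambda$ in a window, one iterates along $N_{k+1}=N_k^{1+\varepsilon}$, $\lambda_{k+1}=\lambda_k-1/\log N_k$, and contradicts the a priori bound $\mathrm L_\lambda(n)\ge n$ (which itself requires Lemma~\ref{lem:Lbound}, using \emph{both} sides of the fictitious regime). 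The output is one specific scale $n$ and one specific $N\approx\mathrm L_{\lambda-1/\log n}(n^{1+\varepsilon})$ where the $\varphi$-lower bound and the truncated decay become compatible --- not a bound valid for every $N\ge 2n$. This multi-scale contradiction is the quantitative core of the proof and is entirely absent from your proposal.
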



In the second step of the proof, we prove a sharp threshold in $\delta$.  In Section \ref{sec:ST}, we prove an inhomogeneous Talagrand formula for Boolean percolation in Proposition \ref{prop:talpoisson}. The following proposition is proved in greater generality in Section \ref{sec:ST}.

 \begin{prop}\label{prop:ST}Let $\lambda,\delta>0$. There exists $\kappa>0$ such that the following holds. 
For every $\varepsilon\in (0,\delta/2\kappa)$, $n\ge \kappa$, $N\ge 2n$ and $\rho\in [1,2d]$,
\begin{equation*}
     \Prb_{\lambda,{\delta}}(\cE_{n,N}(\rho))\ge \frac{1 }{n^ \ep}\implies  \Prb_{\lambda,{\delta -\kappa \varepsilon}}(\cE_{n,N}(\rho))\ge 1- \frac{1 }{n^ \ep}.
 \end{equation*}
\end{prop}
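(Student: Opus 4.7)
The plan is to apply the continuous Talagrand formula of Proposition~\ref{prop:talpoisson} at parameter $(\lambda,\delta')$ for $\delta' \in [\delta-\kappa\varepsilon,\delta]$, and to leverage the rotational invariance of $\mathcal E_{n,N}(\rho)$ to derive a differential inequality in $\delta'$, which is then integrated. Set $f(\delta') := \Prb_{\lambda,\delta'}(\mathcal E_{n,N}(\rho))$. The event is increasing in the Poisson configuration, and the density $r^{-(d+1+\delta')}\ind_{r\ge 1}$ has $\delta'$-derivative equal to $-\log(r)$ times itself, so decreasing $\delta'$ adds positive mass to the intensity (preferentially at large radii); in particular $f$ is non-increasing in $\delta'$, and the goal reduces to lower-bounding $-f'(\delta')$.

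The key input is the rotational invariance of $\mathcal E_{n,N}(\rho)$: all the balls in its definition are centered at $0$, so the event is invariant under every orthogonal transformation fixing the origin. Hence the influence density of a configuration point $(x,r)$ depends only on $\|x\|$ and $r$, and is uniformly distributed on the sphere of radius $\|x\|$. Since every point that can influence the event lies in $\mathrm B_{(\rho+1/2)N}$ with $\rho \in [1,2d]$, averaging on spheres and combining with the Russo--Margulis-type identity for $-f'(\delta')$ gives a uniform bound on the maximum influence density of the form
\begin{equation*}
M(\delta') \,\le\, C\,\frac{-f'(\delta')}{N^{d-1}}
\end{equation*}
up to lower order factors. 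Injecting this bound into the Talagrand inequality of Proposition~\ref{prop:talpoisson} then produces a differential inequality of the form
\begin{equation*}
-f'(\delta') \,\ge\, c\, f(\delta')\bigl(1-f(\delta')\bigr)\,\log N \,\ge\, c\, f(\delta')\bigl(1-f(\delta')\bigr)\,\log n
\end{equation*}
valid for $\delta' \in [\delta-\kappa\varepsilon,\delta]$, with $c>0$ depending only on $\lambda$ and $\delta$.

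To conclude, I would integrate this inequality on $[\delta-\kappa\varepsilon,\delta]$ after dividing by $f(1-f)$:
\begin{equation*}
\log\frac{f(\delta-\kappa\varepsilon)}{1-f(\delta-\kappa\varepsilon)} \,-\, \log\frac{f(\delta)}{1-f(\delta)} \,\ge\, c\,\kappa\,\varepsilon\,\log n.
\end{equation*}
The hypothesis $f(\delta) \ge n^{-\varepsilon}$ gives $\log(f(\delta)/(1-f(\delta))) \ge -\varepsilon \log n - O(1)$. Taking $\kappa$ larger than $3/c$ (and $n\ge \kappa$ to absorb the $O(1)$ term) yields $\log(f(\delta-\kappa\varepsilon)/(1-f(\delta-\kappa\varepsilon))) \ge \varepsilon \log n$, that is, $f(\delta-\kappa\varepsilon) \ge 1 - n^{-\varepsilon}$ as required.

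The main obstacle is precisely the second step: obtaining a workable Talagrand-type formula in the continuous Poisson setting and then bounding $M(\delta')$ using only the $O(d)$ symmetry. Discretizing the space-radius is tempting, but in dimension $d\ge 3$ no lattice is rotationally invariant (the orthogonal group is non-abelian), so any discrete Talagrand inequality would lose the symmetry and the maximum influence bound would collapse. The whole argument therefore hinges on the genuinely continuous Talagrand formula proved in Section~\ref{sec:ST}, together with a careful limiting procedure.
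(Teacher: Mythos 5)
Your plan is essentially the paper's proof of Proposition~\ref{prop:ST}: a Russo-type derivative identity for $\partial_\delta \Prb_{\lambda,\delta}$, the continuous Talagrand inequality of Proposition~\ref{prop:talpoisson}, rotational invariance of $\cE_{n,N}(\rho)$ to control the maximal pivotal mass, and integration of the resulting logistic-type differential inequality on $[\delta-\kappa\varepsilon,\delta]$. The integration step and the choice of $\kappa$ match the paper.

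The one place where you are glossy is the step ``injecting $M(\delta')\le C(-f')/N^{d-1}$ into Talagrand produces $-f'\ge c\,f(1-f)\log n$.'' Two remarks. First, the correct scale in your claimed bound is $n^{d-1}$, not $N^{d-1}$: the event $\cE_{n,N}(\rho)$ is insensitive to balls centred in $\mathrm B_n$, so the centre $x_0$ of a maximally pivotal cell satisfies $\|x_0\|\gtrsim n$, and rotational invariance only yields $\sim n^{d-1}$ disjoint rotated copies of that cell (one must also argue, as the paper does, that one may take the radius index $m_0\ge 2$ so that $\log m_0>0$). Second, substituting $M\lesssim(-f')/n^{d-1}$ into Talagrand gives an \emph{implicit} inequality $-f'\ge c f(1-f)\log\bigl(n^{d-1}/(-f')\bigr)$, and extracting $-f'\ge c'f(1-f)\log n$ from it requires a dichotomy (either $-f'$ is already polynomially large in $n$, or $\log(n^{d-1}/(-f'))\gtrsim\log n$). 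The paper sidesteps this by doing the dichotomy \emph{before} invoking Talagrand: if some $\mathrm{Piv}_m^x\ge n^{-1/2}$, rotational invariance directly gives $-f'\gtrsim n^{d-3/2}\ge\log n$; otherwise $\max\mathrm{Piv}\le n^{-1/2}$ and Talagrand applies with $\log(1/\max\mathrm{Piv})\ge\tfrac12\log n$. Both routes lead to the same place, but the paper's case split avoids the implicit inequality, and you should make one of them explicit to close the argument.
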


Once we know that a big ball is connected to another open big ball with a very good probability, this big ball will play the role of the seed in Grimmett--Marstrand proof using dynamic renormalization. The following proposition (proved in Section \ref{sec:GM}) is a Grimmett--Marstrand type proposition that corresponds to the Step 3 of the sketch of proof.
\begin{prop}\label{prop:GM}Let $\mu$ be a distribution with a finite $d$-moment. There exists $\ep_0>0$ depending only on $d$ such that for all $\ep\le \ep_0$, $n\le N$, if we have
for every $\rho \in[1,2d]$
\begin{equation*}
    \Prb_{\lambda,\mu}(\cE_{n,N}(\rho))\ge 1-\ep
\end{equation*}
then $\lambda>\lambda_c(\mu)-\frac{\lambda K_d}{|\log \ep|}$ where $K_d>0$ depends only on $d$.
\end{prop}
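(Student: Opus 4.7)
The plan is to implement a Grimmett--Marstrand style dynamic renormalization, using the Poisson balls of radius $\ge n$ as seeds. The event $\cE_{n,N}(\rho)$ is tailor-made for this: starting from the ball $\rB_n$ at the origin, it produces with probability $\ge 1-\ep$ a new seed ball $\rB_r^x$ (with $r\ge n$) in the annulus $\rB_{(\rho+1/2)N}\setminus \rB_{\rho N}$, connected to $\rB_n$ through the occupied set in $\rB_{(\rho+1/2)N}$. The new seed contains a ball of radius $n$, so a translated (and rotated) copy of the event can be applied at the new seed to extend the cluster further. Letting $\rho$ range in $[1,2d]$ gives enough concentric shells to extend simultaneously in the $2d$ cardinal directions, enabling a branching exploration over a $\mathbb{Z}^d$ lattice of mesh order $N$.

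I would then set up a coarse site percolation on $\mathbb{Z}^d$, declaring a renormalized site good if, from the current seed at that site, the corresponding translated and rotated versions of $\cE_{n,N}(\rho)$ produce a new seed in each of the relevant directions. By the hypothesis each single extension succeeds with probability $\ge 1-\ep$. To lift this to joint independence across neighbouring renormalized sites, I would use sprinkling: the enhanced intensity $\lambda_* = \lambda\bigl(1 + K_d/|\log \ep|\bigr)$ is written as the superposition of the original process at intensity $\lambda$ with an independent process at intensity $\lambda K_d / |\log \ep|$, and the latter is further split into independent sub-processes used to drive successive independent attempts, following the seedless renormalization scheme of \cite{DCKT}.

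The $|\log \ep|$ factor comes in exactly through these repeated attempts: with the sprinkling divided into $M \asymp |\log \ep|$ independent rounds, the probability that all rounds fail at a given site is at most $\ep^{\Theta(M)}$, which is super-polynomially small, so that the renormalized site opening probability is far above the supercritical threshold for Bernoulli site percolation on $\mathbb{Z}^d$. A Liggett--Schonmann--Stacey type stochastic domination then produces an infinite cluster at the renormalized level, hence in the continuous Boolean process at intensity $\lambda_*$; this gives $\lambda_* > \lambda_c(\mu)$, i.e. $\lambda > \lambda_c(\mu) - \lambda K_d/|\log \ep|$, as claimed.

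I expect the main obstacle to be the careful bookkeeping of the sprinkling. On the one hand, one must verify that the $M \asymp |\log \ep|$ rounds can be made genuinely independent by allocating the sprinkled Poisson points into disjoint subsets, each with enough intensity ($\lambda K_d/|\log \ep|^2$ per round) to drive the success probability of a single attempt up to the required level on the spatial scale $N$. On the other hand, the finite-range constraint in the definition of $\cE_{n,N}(\rho)$, which restricts attention to balls centred in $\rB_{(\rho+1/2)N}$, must be reconciled with the geometric overlap of the translated/rotated events around neighbouring seeds; this likely requires thinning the coarse lattice by a factor proportional to $d$ (hence the need to take $\rho$ up to $2d$) so that the spatial footprints of good events at neighbouring sites remain sufficiently disjoint for the independence argument to go through.
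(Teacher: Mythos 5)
Your high-level framework is right: $\cE_{n,N}(\rho)$ is indeed used as the building block for a Grimmett--Marstrand dynamic renormalization in which the large ball $\mathrm B_r^x$ produced by the event serves as a seed for the next step, and sprinkling bridges the failure probability. You have also correctly identified that the $|\log\ep|$ factor must enter through the sprinkling intensity. However, the mechanism you propose for producing that factor does not work, and it differs from the paper in a way that matters.

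You write that the sprinkled intensity $\lambda K_d/|\log\ep|$ should be split into $M\asymp|\log\ep|$ independent rounds, each of intensity $\lambda K_d/|\log\ep|^2$, so that \enquote{all rounds fail} has probability $\ep^{\Theta(M)}$. This requires the rounds to fail independently, each with probability about $\ep$. But the failure probability $\ep$ comes from the base process at intensity $\lambda$, which is shared by all the rounds; the fresh material available per round has intensity of order $\lambda/|\log\ep|^2$, far too small to reproduce the event on its own. So the rounds are not independent and the $\ep^{\Theta(M)}$ bound is not justified. The actual argument (Lemma \ref{lem:sprinkling}, following Grimmett--Marstrand and \cite{DCKT}) is a single sprinkle combined with a \emph{gateway-counting} observation: embed the Poisson process in a denser one and let $W$ be the set of candidate balls through which the target connection could be completed. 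If the base connection probability exceeds $1-\exp(-3\lambda/\xi)$, then conditionally on failure $|W|$ is typically at least $t\asymp\lambda/\xi$, and an independent sprinkle of intensity $\beta\xi$ opens at least one gateway with probability $\approx 1-\exp(-\beta)$. Taking $\xi\asymp\lambda/|\log\ep|$ makes $\exp(-3\lambda/\xi)\asymp\ep^{\Theta(1)}$ so that the hypothesis $\Prb(\cE_{n,N}(\rho))\ge 1-\ep$ is exactly strong enough. This is where $|\log\ep|$ comes from — it is the number of gateways, not the number of sprinkling rounds.

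Two smaller points. First, the range $\rho\in[1,2d]$ is not there to produce $2d$ cardinal-direction extensions; it covers the possible distances $\|z-2Nx_t\|_2\in[N,(\sqrt d+2)N]$ between the old seed (anywhere in its box) and the new target box, which is what Lemma \ref{lem:balltobox} needs. Second, the dynamic exploration in the paper runs on $\sZ^2$ (a plane inside $\sR^d$), not on $\sZ^d$, and percolation of the exploration is closed via the martingale-type Lemma \ref{lem:explor} from \cite{DCKT} rather than via Liggett--Schonmann--Stacey; the exploration lemma handles the one-step conditional bound directly and avoids the bookkeeping of a full stochastic-domination argument.
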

Now we have all the ingredients to prove Theorem \ref{thm:main}.
\begin{proof}[Proof of Theorem \ref{thm:main}]
Let $\delta>0$ such that $\widehat\lambda_c(\delta)<\lambda_c(\delta)$. Let us prove that $\delta \in\mathcal D$. 
Let $\lambda \in (\widehat\lambda_c(\delta),\lambda_c(\delta)) $. Let $\kappa$ be as in Proposition \ref{prop:ST}.
 Let $\ep\in(0,\delta/2\kappa)$ and $n_0\ge \kappa$. By Proposition \ref{prop:seed}, there exist $n\ge n_0$ and $N\ge 2n $ such 
\begin{equation*}
  \forall \rho \in[1,2d]\qquad  \Prb_{\lambda,\delta}(\cE_{n,N}(\rho))\ge \frac{1}{n^ \ep}.
\end{equation*}
By Proposition \ref{prop:ST}, we obtain 
\begin{equation*}
  \forall \rho \in[1,2d]\qquad    \Prb_{\lambda,\delta-\kappa \ep}(\cE_{n,N}(\rho))\ge 1-\frac{1}{n^ \ep }.
\end{equation*}
By Proposition \ref{prop:GM}, it follows that
\[\lambda\ge\lambda_c(\delta-\kappa\ep)-\frac{ {\lambda}_c(\delta) K_d}{\ep \log n}.\]
There exists an infinite number of $n$ such that the previous inequality holds.
By letting first $n$ go to infinity and then $\ep$ go to $0$, we obtain that
\[\lambda\ge \lambda_c(\delta^-)\,\]
where $\lambda_c(\delta^-)$ is the left limit of the function $\delta\mapsto\lambda_c(\delta) $ at $\delta$.
We recall that $\lambda<\lambda_c(\delta)$, it yields that 
\[\lambda_c(\delta^-)<\lambda_c(\delta)\]
and $\delta \in\mathcal D$.
\end{proof}

\section{Connection to large balls in the fidicious regime}\label{sec:equalitylc}

In this section, we prove Proposition \ref{prop:seed}.
\subsection{Correlation length for truncated radii}

In this section we consider the law of radii
\begin{equation}
    \mu=\mu_\delta|_{[0,n]},
\end{equation}
and we write $\mathbb P_\lambda^{n}=\mathbb P_{\lambda,\mu}$ for the corresponding Boolean percolation measure, and $\mathbb E_\lambda^n$ for the corresponding expectation. For bounded radii, sharpness of the phase transition is known. In particular, one can define a correlation length in the subcritical regime, and  we collect here some quantitative statements that can be deduced from previously known arguments.

Denote by $\cB(\sR^ d)$ the Borel $\sigma$-algebra of $\sR^ d$.
Following Ziesche \cite{ziesche} (see also \cite{D-CT}), define for every $\lambda\ge0$, $n\ge1$ and  $S\in\cB(\sR^d)$ such that $\mathrm B_n\subset S$
\begin{equation}
    \varphi_{\lambda}^{n}(S):=\mathbb E_\lambda^n\big[\sum_{(z,r)\in\eta} \mathbf1[\mathrm B_r^z \cap\partial S \neq \emptyset, \mathrm B_n\stackrel{\{(w,s)\in\eta: \mathrm B_s^w\subset S\})}{\longleftrightarrow }\mathrm B_r^z  \big]
\end{equation}


This functional is instrumental in the proof of sharpness, and we will use the following properties. Let $\lambda\ge 0$,  $N\ge n\ge 1$. 
\begin{enumerate}[\bf P1]
    \item\label{item:1}  If there exists  $ S\in\cB(\sR^d)$ such that $\mathrm B_n \subset S\subset\mathrm B_N$ and $\varphi ^n_{\lambda}(S)\le 1/e$ then 
\[\forall \ell \ge N \qquad \Prb_{\lambda}^n(\mathrm B_n\longleftrightarrow\partial \mathrm B_\ell)\le \exp\left(-\left\lfloor{\ell}/{N}\right\rfloor\right).\]  

\item\label{item:2} If for all $ S\in\cB(\sR^d)$ such that $\mathrm B_n \subset S\subset \mathrm B_N$ we have $\varphi ^n_{\lambda}(S)\ge 1/e$ then
\[\forall \lambda'>\lambda\qquad \Prb_{\lambda'}^n(\mathrm B _n \longleftrightarrow\partial \mathrm B_N)\ge \frac{\lambda'-\lambda}{e\lambda'}.\]
\end{enumerate}

These properties are not exactly stated in \cite{ziesche} but they can be obtained from the proof of the first and third item of Theorem 3.1 in \cite{ziesche}. These properties give rise to a natural definition of a correlation length in the subcritical regime: For every $n\ge 1$ and every $\lambda\ge 0$,
let 
\begin{equation*}
   \mathrm L_\lambda(n):=\inf\left\{\ell\in[n,+\infty): \exists S\in\cB(\sR^d),\,\mathrm B_n\subset S\subset\mathrm B_{\ell},\,\varphi_\lambda^n(S)\le \frac{1}{e}\right\}.
\end{equation*}
where we use the convention $\inf\emptyset =+\infty$.  For fixed $\lambda>0$ and $n\ge 1$,  by using  Properties \ref{item:1} and \ref{item:2} and continuity, we have
\begin{align}
  \label{eq:1}
  &\forall \ell\ge \mathrm L_\lambda(n) && \Prb_{\lambda}^n(\mathrm B_n\longleftrightarrow\partial \mathrm B_\ell)\le \exp\left(-\left\lfloor{\ell}/{\mathrm L_\lambda(n)}\right\rfloor\right)\\
  &\forall \lambda'>\lambda&& \Prb_{\lambda'}^n(\mathrm B _n \longleftrightarrow\partial \mathrm B_{L_\lambda(n)})\ge \frac{\lambda'-\lambda}{e\lambda'}.\label{eq:2}
\end{align}

In the subcritical  regime and up to logarithmic corrections, the quantity $\mathrm L_\lambda(n)$  can be related to the more standards definition of correlation length for the truncated model (defined as the exponential rate of decay of the the connection probabilities).


\subsection{Behaviour of the correlation length in the fidicious regime}
By definition, the correlation length satisfies $\mathrm L_\lambda(n) \in [n,\infty]$. In order to prove  Proposition \ref{prop:seed}, we will use that the correlation length is not degenerated in the fidicious regime, which is the content of the following lemma.
\begin{lem}\label{lem:Lbound}
For any $\lambda\in (\widehat\lambda_c(\delta),\lambda_c(\delta))$, there exists $n_0\ge 1$ such that 
\begin{equation}
    \forall n \ge n_0\qquad 3n \le \mathrm{L}_\lambda(n)<+\infty.
\end{equation}
\end{lem}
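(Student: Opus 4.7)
The lemma consists of two inequalities that I would prove by separate arguments.

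For $\mathrm{L}_\lambda(n)<+\infty$: truncation monotonicity gives $\lambda_c(\mu_\delta|_{[0,n]})\ge\lambda_c(\delta)$, so $\lambda<\lambda_c(\delta)$ places the truncated model in the subcritical regime. Since the truncated distribution has bounded support, subcritical sharpness for Boolean percolation with bounded radii is known (\cite{ziesche,faggionatohlafo,DCRT}); in the $\varphi$-formulation developed for Boolean percolation in \cite{ziesche}, this is precisely the existence of a bounded $S\supset\mathrm{B}_n$ with $\varphi_\lambda^n(S)\le 1/e$, yielding $\mathrm{L}_\lambda(n)<+\infty$.

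For $\mathrm{L}_\lambda(n)\ge 3n$: by the contrapositive of Property~\ref{item:1} (applied with $\ell=N=3n$), it suffices to establish
\[
\mathbb{P}_\lambda^n\bigl(\mathrm{B}_n\longleftrightarrow\partial\mathrm{B}_{3n}\bigr)>\tfrac{1}{e}\qquad\text{for all $n$ large enough,}
\]
and then conclude by monotonicity in the target (any $\ell<3n$ gives a larger probability). I would derive this in two steps. \emph{Step A (full model).} Using $\lambda>\widehat\lambda_c(\delta)$, by definition there exists $c>0$ such that $\mathbb{P}_{\lambda,\delta}(\mathrm{B}_r\leftrightarrow\partial\mathrm{B}_{2r})\ge c$ for every $r>0$. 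Chaining these annular crossings at a few nested scales (e.g.\ $r=n$ and $r=3n/2$) via FKG, together with the rotational invariance of the model and a gluing argument exploiting the continuous Boolean structure, yields a uniform lower bound
\[
\mathbb{P}_{\lambda,\delta}\bigl(\mathrm{B}_n\leftrightarrow\partial\mathrm{B}_{3n}\bigr)\ge c'>\tfrac{1}{e}.
\]

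\emph{Step B (truncation comparison).} Using the coupling $\eta^n:=\eta\cap\{r\le n\}\subset\eta$ and the fact that the crossing event is increasing, any path witnessing $\mathrm{B}_n\leftrightarrow\partial\mathrm{B}_{3n}$ can be taken (by truncating at the first hitting of $\partial\mathrm{B}_{3n}$) inside $\overline{\mathrm{B}_{3n}}$. Hence the full-model crossing holds while the truncated one fails only if some ball of radius $>n$ intersects $\mathrm{B}_{3n}$. The expected number of such balls is
\[
\lambda\int_n^{\infty}\omega_d\,(r+3n)^d\, r^{-d-1-\delta}\,dr\;\le\; C(\lambda,d,\delta)\,n^{-\delta}\;\longrightarrow\;0,
\]
using $(r+3n)^d\le(4r)^d$ for $r\ge n$. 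Consequently $\mathbb{P}_\lambda^n(\mathrm{B}_n\leftrightarrow\partial\mathrm{B}_{3n})\ge\mathbb{P}_{\lambda,\delta}(\mathrm{B}_n\leftrightarrow\partial\mathrm{B}_{3n})-O(n^{-\delta})>1/e$ for $n$ large enough, completing the argument.

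The main obstacle is Step A: the condition $\lambda>\widehat\lambda_c(\delta)$ only provides a positive constant $c$ (possibly below $1/e$) at the factor $2$, and boosting this to a \emph{strict} lower bound $>1/e$ at the factor $3$ is delicate. In dimension $2$ this would follow from RSW; in dimension $d\ge 3$ no RSW is available, and one must instead rely on the rotational symmetries and the specific continuous-space structure of Boolean percolation to ensure that the nested annular crossings actually link up into a single long crossing.
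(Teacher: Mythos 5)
Your proof of $\mathrm L_\lambda(n)<+\infty$ via truncation monotonicity and known sharpness for bounded radii is fine and agrees with the paper's use of \eqref{eq:2}. The lower bound $\mathrm L_\lambda(n)\ge 3n$ is where there is a genuine gap, and it is precisely the one you flag at the end: your Step~A needs $\Prb_{\lambda,\delta}(\mathrm B_n\leftrightarrow\partial\mathrm B_{3n})>1/e$ (uniformly in large $n$), but the only input from $\lambda>\widehat\lambda_c(\delta)$ is that $\inf_r \Prb_{\lambda,\delta}(\mathrm B_r\leftrightarrow\partial\mathrm B_{2r})=c$ for some $c>0$, and $c$ may well be far below $1/e$. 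FKG only lets you \emph{multiply} crossing probabilities, which pushes you the wrong way, and in $d\ge 3$ there is no RSW-type gluing to combine nested annular crossings into a long crossing with a boosted probability. Rotational invariance alone does not produce such a boost. So Step~A, as stated, cannot be completed, and the whole second half of your proposal does not go through.

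The paper sidesteps this difficulty entirely by arguing at a \emph{much larger} scale and by contradiction. Suppose $\mathrm L_\lambda(n)\le 3n$ for infinitely many $n$, and set $N:=n^{1+\delta/(2d)}$. Then Property \ref{item:1} gives \emph{exponential} decay of $\Prb^n_\lambda(\mathrm B_n\leftrightarrow\partial\mathrm B_N)\le \exp(-\lfloor N/(3n)\rfloor)$, and since $N/n=n^{\delta/(2d)}\to\infty$ this is super-polynomially small in $n$; a union bound over a covering of $\mathrm B_N$ by $O((N/n)^d)$ balls of radius $n$ then shows the truncated crossing probability at scale $N$ also vanishes. Meanwhile, the contribution from balls of radius $\ge n$ hitting $\mathrm B_{2N}$ is only polynomially small: it is $O(N^d/n^{d+\delta})+O(\int_N^\infty t^d\,d\mu_\delta)$, which tends to $0$ precisely because $N$ was chosen as a \emph{small} power of $n$ (here $1+\delta/(2d)<1+\delta/d$). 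Adding the two bounds forces $\Prb_{\lambda,\delta}(\mathrm B_N\leftrightarrow\partial\mathrm B_{2N})\to 0$ along a subsequence, contradicting $\lambda>\widehat\lambda_c(\delta)$. In other words, rather than boosting a fixed-scale crossing probability past $1/e$, the paper exploits the \emph{rate} of decay inherent in $\mathrm L_\lambda(n)\le 3n$ and lets the gap between exponential and polynomial scales do the work. You should replace Step~A by this quantitative contradiction argument at scale $N=n^{1+\delta/(2d)}$; your Step~B computation is then essentially the paper's estimate \eqref{eq:interbigball} for the large-ball contribution, and survives unchanged.
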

\begin{proof}
  Let $\lambda>0$.   We prove the following two implications, from which the lemma follows.
  \begin{align*}
    \left(\exists n\ge1\qquad     \mathrm L_\lambda(n)=+\infty\right)\implies  \lambda  \ge \lambda_c(\delta),  \\
    \left( \mathrm L_\lambda(n)\le 3n \text{ for infinitely many }n\ge1 \right)\implies  \lambda  \le \widehat\lambda_c(\delta).
  \end{align*}
  The first implication is a direct consequence of Equation~\eqref{eq:2}. For the second implication, let $n$ such that $\mathrm L_\lambda(n)\le 3n$ and set $N:= n^{1+\frac{\delta}{2d}}$.  If $\mathrm B_N$ is connected to $\partial \mathrm B_{2N}$, either the connection uses only balls of radius smaller than or equal to $n$, or there must exist a ball of radius at least $n$ intersecting $\mathrm B_{2N}$. Therefore, by Markov inequality 
  \begin{equation}
    \label{eq:7}
       \Prb_{\lambda,\mu_\delta}(\mathrm B_{N}\leftrightarrow\partial \mathrm B_{2N})\le \Prb_{\lambda}^n(\mathrm B_{N}\leftrightarrow\partial \mathrm B_{2N})+\mathbb E_{\lambda,\mu}(|\eta\cap E| ),
  \end{equation}
  where $E:=\left\{(x,r)\in \sR^d\times [n,+\infty[: \|x\|_2\le 2N+r\right\}$.  Consider a covering of $\rB_{N}$ with at most $C (N/n)^d$ balls of radius $n$ with center on $\rB_N$ (where $C$ is a constant depending only on $d$). If $\rB_N$ is connected to $\partial \rB_{2N}$, then at least one of the balls of the covering must be connected to $\partial \rB_{2N}$. By the union bound,  translation invariance,  we obtain
  \begin{align}
    \label{eq:8}
     \Prb_{\lambda}^n(\mathrm B_{N}\leftrightarrow\partial \mathrm B_{2N})\le C\left(\tfrac N n\right)^d \mathbb P_\lambda^n(\rB_n\longleftrightarrow \partial \rB_N)\overset {\eqref{eq:1}}\le C\left(\tfrac N n\right)^d \exp\big(-\lfloor N/3n\rfloor\big),
  \end{align}
  where we used the hypothesis $\mathrm L_\lambda(n)\le 3n$ on the second inequality.

  Let us now bound the second term in \eqref{eq:7}. The quantity $|\eta\cap E|$ is a Poisson random variable with parameter  $(dz\otimes\mu_\delta)(E)$, hence
\begin{equation}\label{eq:interbigball}
\mathbb E_{\lambda,\mu}(|\eta\cap E| )=(dz\otimes\mu_\delta)(E)\le C' \int_{n}^{\infty}(N+t)^dd\mu_\delta(t)\le C''\frac{N^ d}{n^ {d+\delta}}+C''\int_N^ \infty t^ dd\mu_\delta(t),
\end{equation}
  where $C',C''$ are constants depending only on $d$. 

  Combining the previous inequalities that hold for an infinite number of $n$, we get that
  \begin{equation*}
    \inf_{N\ge 1}\Prb_{\lambda,\delta}(\mathrm B_{N}\longleftrightarrow\partial \mathrm B_{N})=0,
  \end{equation*}
  which concludes that $\lambda\le \widehat \lambda_c(\delta)$.
\end{proof}

\begin{lem}\label{lem:N}
Let $\ep>0$, $\lambda_0\in(\widehat\lambda_c(\delta),\lambda_c(\delta))$.  For any $n_0\ge 1$, there exist $n\ge n_0$ and $\lambda\in\left(\frac{\widehat\lambda_c(\delta)+\lambda_0}{2},\lambda_0\right)$ such that 
\begin{equation}
    \mathrm{L}_{\lambda-\frac{1}{\log n}}(n^ {1+\ep})\ge \mathrm{L}_\lambda(n)^ {1+\frac{\ep}{2}}.
\end{equation}
\end{lem}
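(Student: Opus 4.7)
I would argue by contradiction, using a bootstrap in the spirit of the Duminil-Copin--Tassion sharpness arguments. Assume the conclusion fails: then there exist $\ep>0$, $\lambda_0\in(\widehat\lambda_c(\delta),\lambda_c(\delta))$ and $n_0\ge 1$ such that for every $n\ge n_0$ and every $\lambda\in I:=\bigl(\tfrac{\widehat\lambda_c(\delta)+\lambda_0}{2},\lambda_0\bigr)$, the strict inequality
\[
\mathrm L_{\lambda-1/\log n}(n^{1+\ep})<\mathrm L_\lambda(n)^{1+\ep/2}
\]
holds. The plan is to iterate this across a scale sequence and force the correlation length to eventually grow strictly more slowly than $n$, contradicting the trivial a priori bound $\mathrm L_\lambda(n)\ge n$ coming from the definition of $\mathrm L_\lambda$ as an infimum over $\ell\ge n$.

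Concretely, I would fix $\mu_0\in I$ strictly below $\lambda_0$ (for instance $\mu_0:=(7\lambda_0+\widehat\lambda_c(\delta))/8$) and define recursively $n_{k+1}:=n_k^{1+\ep}$ and $\mu_{k+1}:=\mu_k-1/\log n_k$, so that $n_k=n_0^{(1+\ep)^k}$. The total parameter drop is
\[
\mu_0-\lim_k\mu_k=\sum_{j\ge 0}\frac{1}{(1+\ep)^j\log n_0}=\frac{1+\ep}{\ep\log n_0},
\]
which can be made arbitrarily small by taking $n_0$ large, so that every $\mu_k$ remains in $I$ and the negated hypothesis is applicable at each pair $(n_k,\mu_k)$. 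Applying it successively and telescoping yields
\[
\mathrm L_{\mu_k}(n_k)<\mathrm L_{\mu_0}(n_0)^{(1+\ep/2)^k}.
\]
Combining this with $\mathrm L_{\mu_k}(n_k)\ge n_k=n_0^{(1+\ep)^k}$ and taking logarithms gives
\[
\Bigl(\tfrac{1+\ep}{1+\ep/2}\Bigr)^k\log n_0<\log \mathrm L_{\mu_0}(n_0),
\]
whose left-hand side blows up while the right-hand side is finite, since $\mu_0<\lambda_c(\delta)$ implies $\mathrm L_{\mu_0}(n_0)<+\infty$ by Lemma~\ref{lem:Lbound}.

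The only delicate step is bookkeeping: one has to pick $\mu_0$ and $n_0$ so that all iterates $(\mu_k)_{k\ge 0}$ stay inside the open window $I$, which is precisely why $n_0$ must be taken large. Once this is arranged, the actual mechanism producing the contradiction is the exponential mismatch between the scale inflation factor $1+\ep$ and the correlation-length inflation factor $1+\ep/2$; no subtler probabilistic input is needed beyond the inequality $\mathrm L_\lambda(n)\ge n$ and the finiteness of $\mathrm L_{\mu_0}(n_0)$ in the subcritical regime.
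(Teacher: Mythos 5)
Your proof is correct and is essentially identical in structure to the paper's: both argue by contradiction, iterate the negated hypothesis along the scales $n_k=n_0^{(1+\ep)^k}$ with a slowly decreasing parameter sequence, and derive a contradiction from the mismatch between the scale growth exponent $1+\ep$ and the correlation-length growth exponent $1+\ep/2$, using $\mathrm L_\lambda(n)\ge n$ and the finiteness of $\mathrm L$ from Lemma~\ref{lem:Lbound}. The only cosmetic difference is that the paper distinguishes the threshold $n_0$ produced by the negation from the freely chosen starting scale $N_0\ge n_0$ of the iteration, whereas you reuse the symbol $n_0$ for both; this is harmless once one notes that the negated hypothesis applies at every scale $\ge n_0$, so the iteration can be started arbitrarily high.
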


\begin{proof}
For contradiction, let $n_0\ge 1$, and assume that for every $n\ge n_0$ and $\lambda \in\left(\frac{\widehat\lambda_c(\delta)+\lambda_0}{2},\lambda_0\right)$
  \begin{equation}
    \label{eq:4}
       \mathrm{L}_{\lambda-\frac{1}{\log n}}(n^ {1+\ep})\le \mathrm{L}_\lambda(n)^ {1+\frac{\ep}{2}}.
     \end{equation}
     Let $N_0\ge n_0 $, and set   $N_k:=N_0^{(1+\varepsilon)^k}$ and $\lambda_k:=   \lambda_0-\sum_{0\le i\le k-1}\frac1{\log(N_i)}$  for every $k\ge 1$.  By choosing $N_0$ large enough we can  ensure that $\lambda_k>\frac{\widehat\lambda_c(\delta)+\lambda_0}{2}$ for every $k$. Thanks to Lemma \ref{lem:Lbound}, we have that $L_{\lambda_0}(N_0)<+\infty$. By induction, Inequality~\eqref{eq:4} implies     \begin{equation}
       \label{eq:6}
       \forall k\ge 0\qquad\mathrm  L_{\lambda_k}(N_k)\le\mathrm L_{\lambda_0}(N_0)^{(1+\frac{\ep}{2})^k},
     \end{equation}
     which contradicts the lower bound $L_{\lambda}(n)\ge n$.
     
\end{proof}


\begin{proof}[Proof of Proposition \ref{prop:seed}]
Let $\delta>0$. Assume that $\widehat\lambda_c(\delta)<\lambda_c(\delta)$. Let $\ep>0$, $\lambda_0\in(\widehat\lambda_c(\delta),\lambda_c(\delta)) $. By Lemma \ref{lem:Lbound}, there exists $n_0\ge 1$ such that 
\begin{equation}\label{eq:defn0}
   \forall n \ge n_0\quad   \forall \lambda\ge \frac{\widehat\lambda_c(\delta)+\lambda_0}{2}\qquad \mathrm{L}_\lambda(n)\ge \mathrm{L}_{\frac{\widehat\lambda_c(\delta)+\lambda_0}{2}}(n)\ge3n .
\end{equation}
By Lemma \ref{lem:N}, there exist $n\ge n_0$ and  $\lambda\in(\frac{\widehat\lambda_c(\delta)+\lambda_0}{2},\lambda_0)$ such that 
\begin{equation}\label{eq:ineqN}
    \mathrm{L}_{\lambda-\frac{1}{\log n}}(n^ {1+\ep})\ge \mathrm{L}_\lambda(n)^ {1+\frac{\ep}{2}}.
\end{equation}
Set 
\[N:=\frac1{9d}\cdot { \mathrm{L}_{\lambda-\frac{1}{\log n}}(n^ {1+\ep})}.\]
Thanks to inequalities \eqref{eq:1} and \eqref{eq:2}, we have 
\begin{equation}\label{eq:expdecay}
 \forall \ell\ge \rL_\lambda(n)\qquad\Prb_\lambda^n(\mathrm B_{n}\longleftrightarrow\partial \mathrm B_{\ell} )\le \exp\left(-\left\lfloor{\ell}/{\rL_\lambda(n)}\right\rfloor\right)
 \end{equation}
 and
 \begin{equation}\label{eq:eqNub}
 \Prb^{n^{1+\ep}}_\lambda(\mathrm B_{n^{1+\ep}}\longleftrightarrow\partial\mathrm B_{9dN})\ge \frac{1}{\log n}.
 \end{equation}
 By the union bound, there exists a constant $c_d$ depending only on $d$ such that
  \begin{equation*}
   \Prb^{n^{1+\ep}}_\lambda(\mathrm B_{n^{1+\ep}}\longleftrightarrow\partial\mathrm B_{9dN})\le c_d n^{(d-1)\ep} \,\Prb_{\lambda}^{n^{1+\ep}}(\mathrm B_{n}\longleftrightarrow\partial\mathrm B_{3dN}) 
  \end{equation*}
  where we use that by inequality \eqref{eq:defn0}, we have $\rL_{\lambda-1/\log n}(n^ {1+\ep})=9dN\ge 3n^{1+\ep}$. 
  Hence, we obtain using inequality \eqref{eq:eqNub}
 \begin{equation}\label{eq:nottoobadconnexion}
 \Prb_{\lambda}^{n^{1+\ep}}(\mathrm B_{n}\longleftrightarrow\partial\mathrm B_{3dN}) \ge \frac{1}{c_d n^{(d-1)\ep}\log n }\ge\frac{3}{n^ {\ep'}}
 \end{equation}
 where $\ep'=d\ep$ (up to choosing a larger $n_0$ depending on $\ep$, we can assume that $n$ is large enough such that the last inequality holds).
 Let $\rho \in[1, 2d]$.
By union bound, we have
\begin{equation}\label{eq:ubnottoobad}
    \begin{split}
        \Prb_{\lambda}^ n( \mathrm B_{(\rho+1/4) N} \longleftrightarrow \partial \mathrm B_{(\rho+1/2) N})&\le c_d \left(\frac{\rho N}{n}\right)^ {d-1}\Prb_{\lambda}^ n( \mathrm B_n \longleftrightarrow \partial  \mathrm B_{N/2})\\&\le   c_d \left(\frac{2d N}{n}\right)^ {d-1} \exp\left(-\frac{N^ {\frac{\ep}{2+\ep}}}{(9d)^ 2}\right)
    \end{split}
\end{equation}
where we used inequalities \eqref{eq:ineqN} and \eqref{eq:expdecay} in the last inequality.
By Lemma \ref{lem:Lbound}, we have $3dN\ge n^ {1+\ep}$, up to choosing a larger $n_0$ depending on $\ep$, we can assume that $n$ is large enough such that  
\[ c_d \left(\frac{2d N}{n}\right)^ {d-1} \exp\left(-\frac{N^ {\frac{\ep}{2+\ep}}}{(9d)^ 2}\right)\le\frac{1}{n^ {\ep'}}\]
 Combining inequalities \eqref{eq:nottoobadconnexion} and \eqref{eq:ubnottoobad}, it follows that for any $\rho\in[1,2d]$
\begin{equation*}
\begin{split}
        \Prb_{\lambda,\mu_\delta}( \cE_{n,N}( \rho))&\ge\Prb_{\lambda}^{n^{1+\ep}}(\mathrm B_{n}\longleftrightarrow\partial\mathrm B_{3dN})- \Prb_{\lambda}^n( \mathrm B_{(\rho+1/4) N} \longleftrightarrow \partial \mathrm B_{(\rho+1/2) N})\\&-\Prb_{\lambda,\mu_\delta}(\exists (x,r)\in \mathrm B_{\rho N} \times\sR_+: \mathrm B_r^ x\cap \partial  \mathrm B_{(\rho+1/4) N})\\&\ge \frac{2}{n^ {\ep'}}-\frac{c_d}{n^ \delta}\ge \frac{1}{n^ {\ep'}}.
        \end{split}
\end{equation*}
The result follows from the fact that $ \Prb_{\lambda,\mu_\delta}( \cE_{n,N}( \rho))\le  \Prb_{\lambda_0,\mu_\delta}( \cE_{n,N}( \rho))$.

\end{proof}

\section{Sharp threshold}\label{sec:ST}
In this section, we prove a Talagrand formula for inhomogeneous weights. By a proper encoding of a Poisson point process, we prove a Talagrand formula for Boolean percolation. We apply this result to prove Proposition \ref{prop:ST}.
\subsection[Why do we need to sprinkle in delta]{Why do we need to sprinkle in $\delta$?}\label{sec:example}
A sprinkling in $\lambda$ is not sufficient to prove a sharp threshold in full generality. Let us illustrate this fact by a simple example. Denote by $\cF$ the event that there exists an open ball of radius at least $n^ {d/(d+\delta)}$ centered inside the ball $\mathrm B_n$. We have
for every $\lambda>0$, \[\Prb_{\lambda,\mu_\delta}(\cF)= 1-\exp(-\alpha_d\lambda),\] where $\alpha_d$ is the volume of the unit ball in $\mathbb R^d$. Hence, even if the event $\cF$ has a lot of symmetries (the event is invariant under rotations), there is no sharp threshold in $\lambda$. The big balls, that are present with a very small probability, play a dictator role in the sense that the presence of one ball is enough for the event to occur. In the event $\cE_{n,N}(\rho)$ (defined in Section \ref{sec:main}), the big balls may also play a dictator role. As for the event $\cF$, we cannot hope to prove a sharp threshold in $\lambda$ for the event $\cE_{n,N}(\rho)$.
 By slightly reducing $\delta$, the density of big balls increase faster compare to the density of small balls. Going back to the event $\cF$, we have for $\ep>0$ \[\Prb_{\lambda,\mu_{\delta-\ep}}(\cF)=1-\exp(-\lambda n^ {d\ep/(d+\delta)}).\] Hence, the event $\cF$ has a sharp threshold in $\delta$.

\subsection{Sharp threshold for inhomogeneous percolation}
In this section, we state general results on sharp threshold for inhomogeneous weights.
We will need the following result that is a Talagrand formula that holds for inhomogeneous Bernoulli percolation.
Let $N\ge1$, and $p_1,\dots,p_N\in[0,1]$, set
\[\mathrm P_{p_1,\dots,p_N}:=\otimes_{i=1}^N \mathrm {Ber}(p_i)\]
where $\mathrm{Ber}(p)$ is the distribution on $\{0,1\}$ of Bernoulli of parameter $p$.
For $i\in\{1,\dots,N\}$ define $\tau_i$ the function on $\{0,1\}^{N}$ that switches the value of the $i$-th bit.
\begin{prop}[Inhomogeneous Talagrand]\label{lem:inhomogeneous Talagrand}There exists a universal constant $C>0$ such that for any family $(p_i)_{1\le i \le N}\in(0,1/2]^N$ for any $f:\{0,1\}^N \rightarrow\{0,1\}$.
We have 
\[\sum_{i=1}^N p_i |\log p_i|\Inf_i(f)\ge C\Var(f)\log \left(\frac{1}{\max _{1\le i \le N}p_i\Inf_i(f)}\right)\,\]
where
\[\Inf_i(f):=\mathrm P_{p_1,\dots,p_N}(f\circ\tau_{i}\ne f).\]
\end{prop}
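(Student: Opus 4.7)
The plan is to adapt Talagrand's original proof of the sharp threshold inequality \cite{Talagrand} from the uniform product measure on $\{0,1\}^N$ to the inhomogeneous measure $\mathrm P_{p_1,\dots,p_N}$. The weighting $p_i|\log p_i|$ appearing on the left-hand side is precisely what comes out of the optimal two-point hypercontractive constant for a Bernoulli factor $\mathrm{Ber}(p_i)$ with small parameter; the rest of the proof is a Fourier-analytic bookkeeping exercise on top of that input.

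The first step is to set up Fourier analysis adapted to the product measure. For each $i$, define $\chi_i(\omega_i):=(\omega_i-p_i)/\sqrt{p_i(1-p_i)}$, so that $\{1,\chi_i\}$ is orthonormal in $L^2(\{0,1\},\mathrm{Ber}(p_i))$, and let $\chi_S:=\prod_{i\in S}\chi_i$. Writing $f=\sum_S\hat f(S)\chi_S$ and using that $f\in\{0,1\}$ (so $\ind_{f\circ\tau_i\ne f}=(f-f\circ\tau_i)^2$), a direct computation gives
\[\Inf_i(f)=\frac{1}{p_i(1-p_i)}\sum_{S\ni i}\hat f(S)^2\quad\text{and}\quad \Var(f)=\sum_{S\ne\emptyset}\hat f(S)^2.\]

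The second step is to invoke the sharp asymmetric two-point hypercontractive inequality for $\mathrm{Ber}(p)$: there is a universal constant $c>0$ such that, for every $p\in(0,1/2]$, the noise operator $T_\rho$ on $L^2(\mathrm{Ber}(p))$ is $(2,1+\rho^2)$-hypercontractive as soon as $\rho^2\le c\,p|\log p|$. This is the Oleszkiewicz--Wolff refinement of Bonami--Beckner, and by tensorization one obtains hypercontractivity for the full product measure $\mathrm P_{p_1,\dots,p_N}$ with any $\rho^2\le c\min_i p_i|\log p_i|$. The third step is Talagrand's argument proper: the Fourier coefficients at level $|S|>k$ contribute at most $k^{-1}\sum_S|S|\hat f(S)^2=k^{-1}\sum_i p_i(1-p_i)\Inf_i(f)$ to $\Var(f)$, which is already controlled by the left-hand side up to a logarithmic factor. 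For the low-frequency part, I would combine the hypercontractive bound with the pointwise Boolean identity $|f\circ\tau_i-f|\le\ind_{i\text{ pivotal}}$ and use Hölder interpolation to reexpress the $L^{1+\rho^2}$-norm of the discrete gradient in terms of $\Inf_i(f)$; the weight $p_i|\log p_i|$ enters exactly here, through the admissible value of $\rho$. Optimizing the cutoff $k$ and the noise $\rho$ then produces the logarithmic factor $\log\bigl(1/\max_ip_i\Inf_i(f)\bigr)$ and yields the claimed inequality.

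The main obstacle will be to carry the sharp constants cleanly through the asymmetric hypercontractive step, so that the correct weight $p_i|\log p_i|$ (and not a weaker power of $p_i$) appears on the left. All analytic ingredients are available in the literature and the overall architecture is standard, but the bookkeeping of exponents in the Hölder interpolation under a truly inhomogeneous product measure requires some care.
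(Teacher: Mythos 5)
Your approach is genuinely different from the paper's: instead of adapting Talagrand's Fourier-analytic argument with biased hypercontractivity, the paper reduces to the symmetric case by \emph{encoding} each $\mathrm{Ber}(p_i)$ factor as a function of $\ell_i$ fair bits (with $p_i = m_i 2^{-\ell_i}$, $Y_i = \mathbf 1[\sum_k X_{i,k}2^{-k} \ge 1-p_i]$), computing the influences of the encoding bits on the lifted function $\widetilde f$, and then invoking the \emph{uniform} Talagrand inequality once on the enlarged hypercube. The weight $p_i|\log p_i|$ appears there as the sum $\sum_j \Prb(\Delta_j\pi\ne 0)$ over the $\ell_i$ encoding bits, not from a hypercontractive constant.

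There is a concrete gap in your sketch. The claim that $T_\rho$ on $L^2(\mathrm{Ber}(p))$ is $(1+\rho^2,2)$-hypercontractive whenever $\rho^2 \le c\,p|\log p|$ is false. Testing on $\chi = (\omega - p)/\sqrt{p(1-p)}$ gives $\|T_\rho\chi\|_2 = \rho$ while $\|\chi\|_q \asymp p^{1/q-1/2}$, so with $q = 1+\rho^2$ the inequality forces roughly $\rho \le p^{1/2-\rho^2}$; plugging in $\rho^2 = cp|\log p|$ makes the left side $\sqrt{cp|\log p|}$ while the right side is $\sqrt{p}\,e^{cp\log^2(1/p)} \sim \sqrt{p}$, which is strictly smaller once $p$ is small. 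The correct admissible scaling is $\rho^2 \lesssim p$, not $p|\log p|$. Beyond this local error, a deeper structural problem remains: in the Fourier route the noise parameter $\rho$ is tied to the hypercontractivity of the whole product (in the step $\|T_\rho D_i f\|_2\le\|D_i f\|_{1+\rho^2}$, the relevant constant is governed by $\min_{j\ne i} p_j$, not by $p_i$). A naive execution therefore produces a bound whose constant depends on $\min_i p_i$ uniformly rather than the genuinely per-coordinate weight $p_i|\log p_i|$ appearing in the statement, and recovering the inhomogeneous form this way would require a substantially more delicate coordinate-by-coordinate interpolation than your outline suggests. This is precisely what the paper's encoding trick bypasses: it converts the inhomogeneity of the parameters into combinatorics of the binary expansion, where the per-coordinate weight falls out directly.
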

\begin{rk}
For simplicity, we prove a version of inhomogeneous Talagrand for $p$'s that are close to $0$. We did not try to optimize the proof to obtain a more symmetric version.
\end{rk}
We were not able to find a proof of this inhomogeneous version of Talagrand formula in the literature. For completeness, we include a proof of Proposition \ref{lem:inhomogeneous Talagrand} in the appendix.

\subsection{Inhomogeneous Talagrand for Boolean percolation}
In this section, we prove a Talagrand formula for Boolean percolation. 
For $x\in\sZ^d$ and $n\ge 1$, define
\[\rS_n^x:= \left(x+\left[-\frac{1}{2},\frac{1}{2}\right)^d\right)\times [n,n+1)\,.\]
Let $\delta>0$ and $\lambda>0$. For short, we will write $\Prb_{\delta}$ for $\Prb_{\lambda,\mu_{\delta}}$. The parameter $\lambda$ will remain fixed in all this section.

For $\cE$ a finite increasing event, denote
\[\mathrm{Piv}_n^x(\cE):=\int_{(z,r)\in\rS_n^x}\Prb_{\delta}(\eta \notin \cE, \eta\cup\{(z,r)\}\in \cE)dzd\mu_\delta(r).\]
Set $\Omega$ the set of countable subsets of $\sR^ d\times\sR_+$. We define the following distance between points in $\Omega$: for any $\eta,\eta'\in\Omega$
\[ \dis(\eta,\eta'):=\left\{\begin{array}{ll}+\infty &\mbox{ if $|\eta|\ne|\eta'|$}\\
\inf_{\pi:\eta\mapsto \eta' \text{ bijection}}\sup\{\|x-x'\|+|r-r'|: (x',r')=\pi((x,r))\}&\mbox{otherwise}\end{array}\right.\]
where we use the convention that $|\eta|=|\eta'|$ if both sets are infinite.
We will denote by $\partial \cE$ the boundary of $\cE$ for the topology associated with the distance $\dis$ in the space $\Omega$. 
For $\eta\in\Omega$ and $N\ge1$, we define the neighborhood $\mathcal N_N(\eta,\ep)$ at distance $\ep$ restricted to $\rB_N$ 
\[\mathcal N_N(\eta,\ep):=\left\{\eta'\in\Omega:\,\dis(\eta\cap (\mathrm B_N\times\sR_+),\eta'\cap (\mathrm B_N\times\sR_+))\le \ep\right\}\]
Let $N$ be such that the event $\cE$ only depends on balls centered in $\mathrm B_N$.
We will need the following notion of approximate pivotality at level $\ep>0$
\[\mathrm{Piv}_n^{x,\ep}(\cE):=\int_{(z,r)\in\rS_n^x}\Prb_{\delta}(\exists \eta'\in\mathcal N_N(\eta,\ep):\eta' \notin \cE,\eta'\cup\{(z,r)\}\in \cE)dzd\mu_\delta(r).\]

The following proposition is a Talagrand formula for increasing event in Boolean percolation.
\begin{prop}\label{prop:talpoisson} Let $\lambda>0$, $\delta_0,\delta_1>0$. There exists $C_0>0$ such that the following holds. For any $\delta\in(\delta_0,\delta_1)$, For any finite increasing event $\cE$ such that $\Prb_{\delta_0}(\partial\cE)=0$, we have \[\sum_{x\in \mathbb Z^d, n\ge 1}\log n\,\mathrm{Piv}_{n}^ x(\cE) \ge C_0\Prb_{\delta}(\cE)(1-\Prb_{\delta}(\cE))\log \left(\frac{1}{\max_{x\in\sZ^d, n\ge 1}\mathrm{Piv}_{n}^ x(\cE)}\right).\]
\end{prop}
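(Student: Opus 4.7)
The strategy is to derive Proposition~\ref{prop:talpoisson} from its discrete inhomogeneous Bernoulli counterpart (Proposition~\ref{lem:inhomogeneous Talagrand}) by a careful discretization of the Poisson point process followed by a limiting argument.

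\textbf{Setup and discretization.} Since $\cE$ is a finite increasing event, it depends only on balls centered in some $\mathrm B_{N_0}$; by the finite $d$-moment of $\mu_\delta$ one may also truncate radii above a large $R$ with arbitrarily small error on $\Prb_\delta(\cE)$ and on the pivotalities. For each $k\ge1$, partition $\mathrm B_{N_0}\times[0,R]$ into cells $(C_j)_{j \in J_k}$ of diameter at most $1/k$ that refine the boxes $\rS_n^x$. To each cell associate the Bernoulli $X_j^{(k)} := \mathbf 1(\eta \cap C_j \ne \emptyset)$ with parameter $p_j^{(k)} := 1-\exp(-\lambda(dz \otimes d\mu_\delta)(C_j))$, and define $f^{(k)} : \{0,1\}^{J_k} \to \{0,1\}$ by placing one ball at the center of each occupied cell and testing membership in $\cE$. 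Since $\Prb_{\delta_0}(\partial \cE) = 0$ and the Radon--Nikodym derivative of $\Prb_\delta$ with respect to $\Prb_{\delta_0}$ on the truncated cylinder is bounded uniformly for $\delta \in (\delta_0, \delta_1)$, the approximation $f^{(k)} \to \mathbf 1_\cE$ holds in $L^1(\Prb_\delta)$ and hence $\Var(f^{(k)}) \to \Prb_\delta(\cE)(1-\Prb_\delta(\cE))$.

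\textbf{Applying the discrete inequality and identifying terms.} Proposition~\ref{lem:inhomogeneous Talagrand} applied to $f^{(k)}$ yields
\[
\sum_{j \in J_k} p_j^{(k)}\, |\log p_j^{(k)}|\, \Inf_j(f^{(k)}) \;\ge\; C\, \Var(f^{(k)})\, \log\!\left(\frac{1}{\max_j p_j^{(k)}\,\Inf_j(f^{(k)})}\right).
\]
Grouping cells by the box $\rS_n^x$ in which they lie, one has $|\log p_j^{(k)}| = (d+1+\delta)\log n + D\log k + O(1)$ uniformly in $\delta \in (\delta_0, \delta_1)$, with $D = D(d)$; meanwhile a Riemann-sum argument (using the boundary-null hypothesis to pass the pointwise pivotality $\pi(z,r) = \Prb_\delta(\eta \notin \cE,\; \eta \cup \{(z,r)\} \in \cE)$ to the limit) gives $\sum_{j \subset \rS_n^x} p_j^{(k)}\, \Inf_j(f^{(k)}) \to \lambda\, \mathrm{Piv}_n^x(\cE)$ as $k \to \infty$. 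On the right-hand side, $\max_j p_j^{(k)}\, \Inf_j(f^{(k)})$ is bounded above by $\max_j p_j^{(k)} = O(k^{-D})$ and, by selecting a cell near a maximum of the pointwise pivotality inside the box achieving $\max_{x,n}\mathrm{Piv}_n^x(\cE)$, bounded below by $c\,k^{-D}\,\max_{x,n}\mathrm{Piv}_n^x(\cE)$ up to subleading corrections.

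\textbf{Passage to the limit and main obstacle.} Both sides of the discrete inequality carry a divergent $D\log k$ contribution: on the LHS a term $D\log k \cdot \lambda \sum_{x,n}\mathrm{Piv}_n^x(\cE)$, and on the RHS a term $D\log k \cdot C\Var(f^{(k)})$; the Poisson Margulis--Russo identity gives $\sum_{x,n}\mathrm{Piv}_n^x(\cE) = \tfrac{d}{d\lambda}\Prb_\lambda(\cE)$, controlling the LHS's leading contribution. The $k$-independent residue of the inequality is the desired estimate, with constant $C_0 = C_0(\lambda, \delta_0, \delta_1)$. The main obstacle lies precisely in this cancellation: the $\log k$ contributions must match cleanly, which requires quantifying the $O(1)$ remainders in $|\log p_j^{(k)}|$ uniformly in $n$, identifying the exact leading $\log k$ term of $\log(1/\max_j p_j^{(k)}\Inf_j)$ and matching it with that on the LHS, and exchanging the $k\to\infty$ limit with the sum over $(x,n)$, which I would justify by truncating the $(x,n)$-sum to a finite range and controlling the tail via the $d$-moment. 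A secondary subtlety is the passage from pointwise pivotality to the exact $\mathrm{Piv}_n^x(\cE)$, which can be handled by first working with the $\epsilon$-approximate pivotality $\mathrm{Piv}_n^{x,\epsilon}(\cE)$ at scale $\epsilon \sim 1/k$ before letting $\epsilon \to 0$ through the boundary-null assumption.
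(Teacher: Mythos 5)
Your discretization-and-limit strategy runs into a fatal problem that you correctly identify but do not resolve: the divergent $D\log k$ terms on the two sides of the discrete inequality do not cancel. After grouping, the LHS behaves like $D\log k\cdot\lambda\sum_{x,n}\mathrm{Piv}_n^x(\cE) + (\text{$k$-independent target LHS}) + o(1)$ and the RHS like $D\log k\cdot C\,\Prb_\delta(\cE)(1-\Prb_\delta(\cE)) + (\text{$k$-independent target RHS}) + o(1)$. An inequality of the form $A_1\log k + A_0 \ge B_1\log k + B_0$ holding for all $k$ only yields $A_0\ge B_0$ when $A_1=B_1$; if $A_1>B_1$ (which is what a Poincar\'e-type bound $\lambda\sum\mathrm{Piv}\ge C\Var$ would force), the inequality is trivially satisfied for large $k$ and tells you nothing about the $k$-independent terms. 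There is no reason for $\lambda\frac{d}{d\lambda}\Prb_\lambda(\cE)$ to equal $C\Var$ for the universal Talagrand constant $C$, so the ``$k$-independent residue'' you invoke cannot be extracted. Invoking Margulis--Russo only tells you the coefficient $A_1$ is finite; it does not make the two coefficients agree.

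The reason this happens is structural: in a uniform cell decomposition at scale $1/k$, there are $\sim k^D$ cells each with parameter $\sim k^{-D}$ and comparable influence, so $\sum_j p_j\Inf_j$ stays of order one while the weight $|\log p_j|\sim D\log k$ diverges, injecting an incurable $\log k$ into the LHS without a matching gain. The paper avoids this entirely with a different encoding. Instead of cells, it encodes each potential point in a box $\rS_n^x$ with (i) count variables $\alpha_k$ whose parameters are $p(k,\lambda g(n))\sim g(n)$, so $|\log p|\sim(d+\delta)\log n$ is precisely the weight that should appear, and (ii) \emph{dyadic} bits $\beta_j,\gamma_j^{(l)}$ with parameter $1/2$ (so $|\log p|=\log 2$, a constant) whose influences decay geometrically like $2^{-j}$ in the resolution level $j$, because the set of configurations for which refining the $j$-th dyadic digit of position or radius changes pivotality is itself of measure $\sim 2^{-j}$. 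Consequently, as the truncation level $K\to\infty$, the contribution of the fine-resolution bits sums to a finite quantity proportional to $\mathrm{Piv}_n^x(\cE)$, and no $\log K$ divergence appears. That geometric decay is the key idea your approach is missing; without it, the limit $k\to\infty$ degenerates instead of producing the stated $k$-independent bound.
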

\begin{rk}
This proposition holds for increasing events for the two following partial orders. An event $\cE$ is coloring increasing if for any $\eta\in\cE$ for any $\eta'$ such that $\cO(\eta)\subset \cO(\eta')$ then $\eta'\in\cE$.

We also define the partial order $\prec$ on $\sR^d\times \sR_+$ as follows. We say that $\eta\preceq \eta'$ if for any $(x,r)\in \eta$ there exists $(y,r')\in\eta'$ such that $\mathrm B_r^x \subset \mathrm B _ {r'}^y$.
We say that an event $\cE$ is increasing for $\prec$ if  $\eta\in\cE$ and $\eta\preceq \eta'$ then $\eta'\in\cE$.

\end{rk}
\begin{rk} We believe that the hypothesis $\Prb_{\delta}(\partial\cE)=0$ holds for any increasing event. Note that we will only work on connection events. For connection events, if $\eta\in\partial \cE$ then there exists $(z,r),(z',r')\in\eta $ such that $\|z-z'\|=r+r'$. It follows that $\Prb_{\delta}(\partial\cE)=0$.

\end{rk}

\begin{rk} At a first look, it may seem that  the formula in Proposition \ref{prop:talpoisson}  involves an underlying discretization of the space. We would like to emphasize that the pivotal quantities  $\mathrm{Piv}_{n}^ x(\cE)$ are defined in terms of the continuous model itself and not a discretized version of it.  One can check that the formula is equivalent to the existence of a constant $C>0$ such that
\[\int_{(z,r)\in\sR^d \times \sR_+}I_{z,r}(\cE)\log r \,dzd\mu(r)\ge C \Prb(\cE)(1-\Prb(\cE))\log \left(\frac{1}{\sup_{(z,r)\in\sR^d \times \sR_+} I_{z,r}(\cE)}\right)\]
where $I_{z,r}(\cE)=\Prb(\eta \notin \cE, \eta\cup\{(z,r)\}\in\cE)$.

\end{rk}
We will need the following lemma to prove Proposition \ref{prop:talpoisson}.
\begin{lem}\label{lem:approxpiv}Let $\cE$ be a finite increasing event such that for any $\lambda,\delta>0$, $\Prb_{\delta}(\partial\cE)=0$. We have for any $x\in\sZ^ d$ and $n\ge 1$
\[\lim_{\ep\rightarrow 0}\mathrm{Piv}_n^{x,\ep}(\cE)=\mathrm{Piv}_n^x(\cE).\]
\end{lem}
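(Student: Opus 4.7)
Write $A_\ep(z,r) := \{\exists\, \eta' \in \mathcal N_N(\eta,\ep):\ \eta' \notin \cE,\ \eta' \cup \{(z,r)\} \in \cE\}$ and $A_0(z,r) := \{\eta \notin \cE,\ \eta \cup \{(z,r)\} \in \cE\}$. Taking the trivial witness $\eta' = \eta$ gives $A_0(z,r) \subset A_\ep(z,r)$ and hence $\mathrm{Piv}_n^{x,\ep}(\cE) \ge \mathrm{Piv}_n^x(\cE)$. Since $\ep \mapsto A_\ep(z,r)$ is decreasing in $\ep$, downward monotone convergence inside the integral reduces the problem to showing that $\Prb_\delta\big(\bigcap_{\ep > 0} A_\ep(z,r)\big) = \Prb_\delta(A_0(z,r))$ for Lebesgue$\otimes\mu_\delta$-a.e.\ $(z,r) \in \rS_n^x$.

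The crux is the set-theoretic inclusion
\[\bigcap_{\ep > 0} A_\ep(z,r) \;\subset\; A_0(z,r) \,\cup\, \{\eta \in \partial \cE\} \,\cup\, \{\eta : \eta \cup \{(z,r)\} \in \partial \cE\}.\]
For $\eta$ in the left-hand side, pick witnesses $\eta'_\ep$ and, using that $\cE$ only depends on balls centered in $\rB_N$, replace $\eta'_\ep$ outside $\rB_N \times \sR_+$ by $\eta$; neither $\mathcal N_N(\eta,\ep)$-membership nor the $\cE$-constraints are affected. Since $\dis$ is finite only between configurations of equal cardinality, the new $\eta'_\ep$ satisfies $\dis(\eta'_\ep, \eta) \le \ep$ and correspondingly $\dis(\eta'_\ep \cup \{(z,r)\}, \eta \cup \{(z,r)\}) \le \ep$. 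At any configuration outside $\partial \cE$ the indicator $\mathbf 1_\cE$ is continuous, so if neither $\eta$ nor $\eta \cup \{(z,r)\}$ lies on $\partial \cE$, passing to the limit in $\eta'_\ep \notin \cE$ and $\eta'_\ep \cup \{(z,r)\} \in \cE$ forces $\eta \in A_0(z,r)$; otherwise one of the two boundary alternatives holds.

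It remains to check that the two boundary terms integrate to zero over $\rS_n^x$. The first is $\mu_\delta([n,n+1)) \cdot \Prb_\delta(\partial \cE) = 0$ by hypothesis. For the second, the Slivnyak--Mecke formula applied to $f(\eta,p) := \mathbf 1_{\rS_n^x}(p)\, \mathbf 1_{\partial \cE}(\eta \cup \{p\})$ yields
\[\lambda \int_{\rS_n^x} \Prb_\delta\!\left(\eta \cup \{(z,r)\} \in \partial \cE\right) dz\, d\mu_\delta(r) \;=\; \E_\delta\!\left[|\eta \cap \rS_n^x|\, \mathbf 1_{\partial \cE}(\eta)\right] \;=\; 0,\]
since $\mathbf 1_{\partial \cE}(\eta) = 0$ almost surely. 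Combined with the first paragraph, this closes the argument.

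I expect the main technical subtlety to lie in the continuity step of paragraph two. The distance $\dis$ is infinite between configurations of differing cardinality, and a point of $\eta$ located on $\partial \rB_N$ could in principle change the cardinality of the restriction under small perturbations, invalidating the convergence $\eta'_\ep \to \eta$. However the set of $\eta$ having a point on $\partial \rB_N$ has $\Prb_\delta$-measure zero and can be absorbed into the a.e.\ statement; the same remark applies to the negligible set of $(z,r)$ for which $z \in \partial \rB_N$.
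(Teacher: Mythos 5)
Your proof is correct and essentially matches the paper's: both isolate the same two boundary contributions $\{\eta\in\partial\cE\}$ and $\{\eta\cup\{(z,r)\}\in\partial\cE\}$, kill the first by the hypothesis $\Prb_\delta(\partial\cE)=0$, and kill the second by the Mecke formula (which the paper phrases as a Margulis--Russo $\lambda$-derivative from \cite{Last14}, computationally the same thing). The only cosmetic difference is that you pass to the $\ep\to 0$ limit first via monotonicity of $A_\ep$ and a set-theoretic inclusion for $\bigcap_\ep A_\ep$, whereas the paper bounds $\mathrm{Piv}_n^{x,\ep}$ by a union bound at fixed $\ep$ and then sends $\ep\to 0$ by dominated convergence.
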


\begin{proof} Let $N$ be such that $\cE$ only depends on points centered in $ \mathrm B_N\times \sR_+$. 
Let $\ep>0$ and $(z,r)\in\rS_n^x$, we have
\begin{equation*}
    \begin{split}
        \Prb_{\delta}&(\exists \eta'\in\mathcal N_N(\eta,\ep):\eta' \notin \cE,\eta'\cup\{(z,r)\}\in \cE)\\&\le   \Prb_{\delta}( \eta \in \mathcal N_N(\partial\cE,\ep) )+
        \Prb_{\delta}( \eta\cup\{(z,r)\} \in \mathcal N_N(\partial\cE,\ep))+ \Prb_{\delta}(\eta \notin \cE,\eta\cup\{(z,r)\}\in \cE).
    \end{split}
\end{equation*}
It follows that 
\begin{equation*}
\begin{split}
   \mathrm{Piv}_n^{x,\ep}(\cE)\le&\mathrm{Piv}_n^x(\cE)+\Prb_{\delta}( \mathcal N_N(\partial\cE,\ep))+\int_{(z,r)\in\rS_n^x}\Prb_{\delta}( \eta\cup\{(z,r)\}\in  \mathcal N_N(\partial\cE,\ep) )dzd\mu_\delta(r).
    \end{split}
\end{equation*}
We have
\[\lim_{\ep\rightarrow 0}\Prb_{\delta}(\mathcal N_N(\partial\cE,\ep))=\Prb_{\delta}(\partial \cE)=0.\]
Similarly, by dominated convergence
\[\lim_{\ep\rightarrow 0}\int_{(z,r)\in\rS_n^x}\Prb_{\delta}( \eta\cup\{(z,r)\}\in \mathcal N_N(\partial\cE,\ep))dzd\mu_\delta(r)=\int_{(z,r)\in\rS_n^x}\Prb_{\delta}(\eta\cup\{(z,r)\}\in\partial \cE)dzd\mu_\delta(r).\]
Using Proposition 2.1 in \cite{Last14}, we have
\begin{equation*}
    \begin{split}
\int_{(z,r)\in\rS_n^x}\Prb_{\delta}(\eta\cup\{(z,r)\}\in\partial \cE)dzd\mu_\delta(r)&\le\int_{(z,r)\in\sR^d\times\sR_+}\Prb_{\delta}(\eta\cup\{(z,r)\}\in\partial \cE,\eta\notin \cE)dzd\mu_\delta(r)\\&= \frac{\partial}{\partial \lambda}\Prb_\delta(\partial \cE)=0
\end{split}
\end{equation*}
The result follows.

\end{proof}
Denote by 
\[F(t):=\int_{1}^{t}\frac{d+\delta}{r^{d+1+\delta}}dr\,.\]
For $n\ge 1$ define
\[\forall t\in[0,1]\qquad F_n^{-1}(t):=
 \inf\left\{s \ge n: \frac{F(s)-F(n)}{F(n+1)-F(n)}\ge t\right\}\,.\]
We will frequently use the following claim in the proof of Proposition \ref{prop:talpoisson}. The proof follows from an easy computation.
\begin{claim}\label{claim:reglaw}There exists a constant $c$ depending on $\delta$ and $d$ such that the following holds. Let $n\ge 1$, $i\ge1$. Let $t_0<t_1\in[n,n+1)$.
We have
\[F_n^{-1}(t_1)-F_n^{-1}(t_0)\le c(t_1-t_0),\]
Moreover, if
\[F_n^{-1}(t_1)-F_n^{-1}(t_0)\le\frac{1}{2^i},\]
then
\[t_1-t_0\le \frac{1}{2^i}.\]
\end{claim}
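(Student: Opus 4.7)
}

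My plan is to view both parts as bi-Lipschitz estimates for the quantile map $F_n^{-1}$, so that the whole claim reduces to the elementary fact that the density $F'(r)=(d+\delta)/r^{d+1+\delta}$ has bounded oscillation on $[n,n+1]$, uniformly in $n\ge 1$. (I read the range $[n,n+1)$ stated for $t_0,t_1$ as a typo for $[0,1)$, the natural domain of $F_n^{-1}$.)

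The key observation I will use is that for every $r\in[n,n+1]$ and every $n\ge 1$,
\[\frac{d+\delta}{(n+1)^{d+1+\delta}}\ \le\ F'(r)\ \le\ \frac{d+\delta}{n^{d+1+\delta}},\]
so the ratio $\sup_{[n,n+1]}F'/\inf_{[n,n+1]}F'$ is at most $(1+\tfrac{1}{n})^{d+1+\delta}\le 2^{d+1+\delta}$.

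For the first inequality, I would set $s_j:=F_n^{-1}(t_j)\in[n,n+1]$ for $j\in\{0,1\}$. The defining identity of $F_n^{-1}$ reads
\[(t_1-t_0)\bigl(F(n+1)-F(n)\bigr)\ =\ F(s_1)-F(s_0)\ =\ \int_{s_0}^{s_1}F'(r)\,dr,\]
and then bounding the rightmost integral below by $(s_1-s_0)\inf_{[n,n+1]}F'$ and the factor $F(n+1)-F(n)=\int_n^{n+1}F'(r)\,dr$ above by $\sup_{[n,n+1]}F'$ gives $s_1-s_0\le 2^{d+1+\delta}(t_1-t_0)$. This proves the first bound with $c=2^{d+1+\delta}$.

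For the second implication, I would run the same identity with the roles of $\sup$ and $\inf$ swapped, which produces $t_1-t_0\le 2^{d+1+\delta}(s_1-s_0)$. Assuming $s_1-s_0\le 1/2^i$ then yields $t_1-t_0\le 2^{d+1+\delta}/2^i$. The harmless multiplicative constant $2^{d+1+\delta}$ can either be absorbed into a single common $c$ throughout the claim, or handled by replacing $i$ by $i-\lceil d+1+\delta\rceil$; this loss is immaterial in the subsequent use of the claim. I do not expect any genuine obstacle here: the claim just says that $F_n^{-1}$ is bi-Lipschitz with constants depending only on $d$ and $\delta$, and this follows directly from the uniform control of $F'$ on $[n,n+1]$.
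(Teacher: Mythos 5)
Your argument is correct and is the natural proof; the paper does not give one (it says the claim ``follows from an easy computation''), so there is no authors' proof to compare against. Two of your remarks deserve to be recorded. First, reading the domain $t_0<t_1\in[n,n+1)$ as a typo for $[0,1)$ is the only sensible interpretation, since $F_n^{-1}$ maps $[0,1]$ onto $[n,n+1]$. Second, and more substantively, the second implication as literally stated is false: since $F'$ is strictly decreasing on $[n,n+1]$, one has $(F_n^{-1})'(0)=(F(n+1)-F(n))/F'(n)<1$, so near $t=0$ an $s$-interval of length $1/2^i$ can correspond to a $t$-interval strictly longer than $1/2^i$. The correct statement, which is exactly what your symmetrized bound yields, is $t_1-t_0\le 2^{d+1+\delta}(s_1-s_0)$, so the second conclusion should read $t_1-t_0\le c/2^i$ with the same constant $c$ as in the first part. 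As you observe, this extra factor is harmless downstream: inequality \eqref{eq:comprpir} already carries a $c$, and the windows $[\mathfrak r-2^{-j},\mathfrak r+2^{-j}]$ in the influence bounds for $\beta_j$ and $\gamma_j^{(l)}$ need only be widened to $[\mathfrak r-c2^{-j},\mathfrak r+c2^{-j}]$, which merely adjusts the numerical constants in the final estimates.
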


\begin{proof}[Proof of Proposition \ref{prop:talpoisson}]
Fix $\lambda$ and $\delta_0<\delta_1$.
Set 
\[g(n):=\frac{F(n+1)-F(n)}{d+\delta}\,\]
and
\[ p(k,t):=\Prb(\cP(t)\ge k+1|\cP(t)\ge k)\]
where $\cP(t)$ is a Poisson random variable of parameter $t$.
\paragraph{Encoding the Poisson point process.}
For each $\rS_n^x$, we associate a Bernoulli random variable $\alpha_0$ of parameter $p(0,\lambda g(n))$. This random variable will encode the presence of at least one point in the cell $\rS_n^ x$.
For each $(\rS_n^x,k)$ with $k\ge 1$, we associate a Bernoulli random variable $\alpha_k$ of parameter $p(k,\lambda g(n))$ (that will encode the presence of a $k$-th ball in the cell $\rS_n^x$), an i.i.d. family $(\beta_i)_{i\ge 1}$ of Bernoulli random variables of parameter $1/2$ and an i.i.d. family $(\gamma_i^{(1)},\dots,\gamma_i^{(d)})_{i\ge 1}$ of Bernoulli random variables of parameter $1/2$ (that will encode respectively the radius and the position of the $k$-th ball of the cell). Write $\mathbb P$ for the joint law of these families of Bernoulli random variable.
Finally, define for $m\in\sN\cup\{+\infty\}$
\[z_k^m(\rS_n^ x)=z_k^m:=x+ \left(\sum_{i= 1}^m \frac{\gamma_i^{(1)}}{2^i},\dots, \sum_{i= 1}^m\frac{\gamma_i^{(d)}}{2^i}\right)-\left(\frac{1}{2},\dots,\frac{1}{2}\right)\]
and 
\[r_k^m(\rS_n^ x)=r_k^m:= F_n^{-1}\left(\sum_{i= 1}^m\frac{\beta_i}{2^i}\right)\,.\]
When $m=+\infty$, we write $z_k$ and $r_k$ instead of  $z_k^ {+\infty}$ and $r_k^ {+\infty}$.
It is easy to check that $z_k$ is distributed uniformly on $x+[-1/2,1/2)^d$ and $r_k$ is distributed given the distribution $(d+\delta)\mu_\delta$ conditioned on being in the interval $[n,n+1]$. 
Denote by $\eta_{\rS_n^ x}$ the following random set:
\[\eta_{\rS_n^ x}:=\left\{(z_k,r_k): \prod_{i=0}^{k-1}\alpha_i=1\right\}.\]
We have
\[\Prb(|\eta_{\rS_n^ x}|=k)=\Prb\left(\alpha_k=0, \prod_{i=0}^{k-1}\alpha_i=1\right)=(1-p(k,\lambda g(n)))\prod_{i=0}^{k-1}p(i,\lambda g(n))=\Prb( \cP(\lambda g(n))=k).\]
Hence, the set $\eta_{\rS_n^ x}$ is a Poisson point process of intensity $\lambda dz\otimes \mu_\delta$ restricted to the cell $\rS_n^x$. It yields that the set $\eta=\cup_{x,n}\eta_{\rS_n^ x}$ is a Poisson point process of intensity $\lambda dz\otimes \mu_\delta$. In particular, we have
\begin{equation}\label{eq:equalitylaw}
    \Prb(\eta\in\cE)=\Prb_{\delta}(\cE).
\end{equation}


\paragraph{Approximation of the Poisson point process using a finite encoding.} In order to apply Talagrand formula, we need to approximate the Poisson point process with a finite encoding. First, since the event $\cE $ is finite, there exists $N$ such that the event only depends on balls in $\cup_{x\in\mathrm B_{N}\cap \sZ^d,n\ge 1}\rS_n^x$.
Let $K\ge 1$. Denote by $\cF_K$ the event where there exists a cell with more than $K$ balls or a ball whose radius is larger than $KN$:
\[\cF_K:=\left\{\exists(x,n)\in(\mathrm B_N\cap\sZ^d)\times [0,KN]: \,|\eta_{\rS_n^ x}|\ge K \right\}\cup\left\{\eta\cap (\mathrm B_N\times[KN,+\infty))\ne\emptyset \right\} .\]
It is easy to check that
\[\lim_{K\rightarrow\infty}\Prb(\cF_K)=0\,.\]
Write $\overline z_k$ and $\overline r_k$ for $z_k^K$ and $r_k^K$. 
We now define $\overline \eta$ the approximate version of $\eta$ defined by truncating up to level $K$:
\[\overline \eta_{\rS_n^ x}:=\left\{(\overline z_k,\overline r_k): \prod_{i=0}^{k-1}\alpha_i=1, \,k\le K\right\}\]
and \[\overline \eta:=\bigcup_{(x,n)\in(\mathrm B_N\cap\sZ^d)\times [0,KN]} \overline \eta_{\rS_n^ x}.\]
Note that on the event $\cF_K^c$, we have
\begin{equation}\label{eq:diseta}
   \overline \eta \in\mathcal N_N(\eta, \ep_K)
\end{equation}
where $\ep_K$ depends on $K$ and goes to $0$ when $K$ goes to infinity.
Fix $(x,n)\in(\mathrm B_N\cap\sZ^d)\times [0,KN]$. 
For $(z,r)\in \rS_n^x$, we define its projection $(\pi(z),\pi_n(r))$ as follows
\[(\pi(z),\pi_n(r)):=\left(2^{-K}\lfloor 2^Kz\rfloor, F_n^{-1}\left(2^{-K}\left\lfloor 2^K\frac{F(r)-F(n)}{F(n+1)-F(n)}\right\rfloor\right)\right)\,.\]
In particular we have 
\[(\overline z_k,\overline r_k)=(\pi(z_k),\pi_n(r_k))\,.\]
Thanks to Claim \ref{claim:reglaw}, we have for $r\in[n,n+1)$
\begin{equation}\label{eq:comprpir}
    0\le r-\pi_n(r)\le c2^{-K}.
\end{equation}
We need to compute the influence of all the variables on the event $\ind_{\overline \eta\in\cE}$.
We define the influence as follows 
\[\Inf_{\alpha_0}^ {(x,n)}(\cE):=\Prb(\ind_{\overline \eta \in\cE}\circ\tau_{\alpha_0}^{(x,n)}\ne \ind_{\overline \eta \in\cE})\]
where $\tau_{\alpha_0}^{(x,n)}$ is the function that switches the value of the bit corresponding to the variable $\alpha_0$ of the cell $\rS_n^x$. The other influences are defined similarly.

\paragraph{Influence of $\alpha_0$.} The variable $\alpha_0$ dictates if there is at least one point in $\rS_n^x$. If the variable is influential then the cell $\rS_n^x$ is pivotal. We say that the cell is pivotal if $\overline \eta\setminus \rS_n^x \notin\cE$ and $\overline \eta\setminus \rS_n^x\cup\{(x,n+1+\sqrt d)\}\in\cE$.
It yields that
\begin{equation*}
\begin{split}
\Inf_{\alpha_0}^ {(x,n)}(\cE)&\le \Prb( \overline \eta \setminus \rS_n^x\notin \cE, \overline \eta\setminus \rS_n^x\cup\{(x,n+1+\sqrt d)\}\in \cE)\\&=\frac{\Prb( \overline \eta \notin \cE, \overline \eta\cup\{(x,n+1+\sqrt d)\}\in \cE)}{\Prb(\overline \eta\cap\rS_n^x=\emptyset)} .
\end{split}
\end{equation*}
We have
\begin{equation*}
    \begin{split}
        \Prb&( \overline \eta \notin \cE, \overline \eta\cup\{(x,n+1+\sqrt d)\}\in \cE)\\
        &= \frac{1}{g(n+2d)}\int_{(z,r)\in\rS_{n+2d}^x} \Prb( \overline \eta \notin \cE, \overline \eta\cup\{(x,n+1+\sqrt d)\}\in \cE)dzd\mu_\delta(r)\\
        &\le\frac{1}{g(n+2d)}\int_{(z,r)\in\rS_{n+2d}^x} \Prb( \overline \eta \notin \cE, \overline \eta\cup\{(z,r)\}\in \cE)dzd\mu_\delta(r)
    \end{split}
\end{equation*}
and using \eqref{eq:diseta}
\begin{equation*}
  \frac{1}{g(n+2d)}\int_{(z,r)\in\rS_{n+2d}^x} \Prb( \overline \eta \notin \cE, \overline \eta\cup\{(z,r)\}\in \cE)dzd\mu_\delta(r)\le \Prb(\cF_K)+\frac{\mathrm{Piv}_{n+2d}^{x,\ep_K}(\cE)}{g(n+2d)} .
\end{equation*}
Besides, we have
\[\Prb(\overline \eta\cap\rS_n^x=\emptyset)=\exp(-\lambda g(n))\ge \exp(-\lambda)\,.\]
Finally, we have
\[\Inf_{\alpha_0}^ {(x,n)}(\cE)\le \left(\Prb(\cF_K)+\frac{\mathrm{Piv}_{n+2d}^{x,\ep_K}(\cE)}{g(n+2d)}\right)\exp(\lambda).\]
\paragraph{Influence of $\alpha_k$ for $k\le K-1$.}
For $\alpha_k$ to be influential, we must have that 
$\prod_{i=0}^{k-1}\alpha_i=1$. This occurs with probability 
$\Prb( \cP(\lambda g(n))\ge k)$. Moreover, the cell $\rS_n^x$ must be pivotal.
By the same computations as above, we have
\[\Inf_{\alpha_k}^ {(x,n)} (\cE)\le \Prb( \cP(\lambda g(n))\ge k) \left(\Prb(\cF_K)+\frac{\mathrm{Piv}_{n+2d}^{x,\ep_K}(\cE)}{g(n+2d)}\right)\exp(\lambda).\]

Fix $1\le k\le K$. We now consider the influence of variables encoding the position and the radius of the $k$-th ball of the cell $\rS_n^ x$.
\paragraph{Influence of $\gamma_j^{(1)}$ for $j\le K$.} For $\gamma_j^{(1)}$ to be influential, the $k$-th ball of the cell $\rS_n^ x$ has to be present, that is $\prod_{i=0}^{k-1}\alpha_i=1$. This occurs with probability 
$\Prb( \cP(\lambda g(n))\ge k)$.

Denote by $\Lambda$ the random box of side length $2^{-j+1}$ centered at a point in $2^{-j+1}\sZ^d$ that contains $\overline z_k$. Set $\overline \eta^{(k)}=\overline \eta\setminus \{(\overline z_k,\overline r_k)\}$. For $\gamma_j^{(1)}$ to be pivotal the function $z\mapsto \ind_{\overline \eta^{(k)}\cup\{(z,\overline r_k)\}\in \cE}$ must be non-constant on $\Lambda$.
Let us assume that the function is non-constant on $\Lambda$.
Set 
\[a:=\inf\{t\ge n :\exists z\in\Lambda\quad\overline\eta^{(k)}\cup\{(z,t)\}\in \cE\}\]
and
\[b:=\sup\{t\le n+1 : \exists z\in\Lambda\quad\overline\eta^{(k)}\cup\{(z,t)\}\notin \cE\}.\]
It is easy to check that $\overline\eta^{(k)}\cup\{(w,a+d2^{-j+1})\}\in \cE$ for all $w\in\Lambda$. Hence, we have $a\le b\le a+d2^{-j+1}$.
If $\gamma_j^{(1)}$ is pivotal, then $\overline r_k=\pi_n(r_k)\in[a,b]$ and by inequality \eqref{eq:comprpir}, we have $\overline r_k\in [a,b+c2^{-K}]$.
Using Claim \ref{claim:reglaw}, we have
\begin{equation*}
    \begin{split}
          & \int_{(z,r)\in\rS_{n}^x}\Prb\left(\overline\eta^{(k)}\notin \cE,\overline\eta^{(k)}\cup\{(\pi(z),\pi_n(r))\}\in \cE, \pi_n(r)\in[a,b]\right)dzd\mu_\delta(r)\\
              &\quad\le \int_{(z,r)\in\rS_{n}^x}\Prb\left(\overline\eta\setminus \rS_n^ x\notin \cE,\overline\eta\setminus \rS_n^ x\cup\{(z,n+d)\}\in \cE, r\in[a,b+c2^{-K}]\right)dzd\mu_\delta(r)\\
                &\quad\le 2c^2d2^{-j+1}g(n)\int_{z\in (x+[-1/2,1/2)^d)}\Prb\left(\overline\eta\setminus \rS_n^ x\notin \cE,\overline\eta\setminus \rS_n^ x\cup\{(z,n+d)\}\in \cE\right)dzd\mu_\delta(r)\\
        &\quad\le c^2d2^{-j+2}\frac{g(n)}{g(n+d)}\int_{(z,r)\in\rS_{n+d}^x}\Prb\left(\overline\eta\setminus \rS_n^ x\notin \cE,\overline\eta\setminus \rS_n^ x\cup\{(z,n+d)\}\in \cE\right)dz\\
         &\quad\le c^2d2^{-j+2} \frac{g(n)}{g(n+d)}(\mathrm{Piv}_{n+d}^{x,\ep_K}(\cE)+g(n+d)\Prb(\cF_K))\exp(\lambda).
    \end{split}
\end{equation*}
Finally, we have
\[\Inf_{\gamma_j^ {(1)}}^ {(x,n,k)}(\cE)\le  \Prb( \cP(\lambda g(n))\ge k)c^2d2^{-j+2}\left(\frac{\mathrm{Piv}_{n+d}^{x,\ep_K}(\cE)}{g(n+d)}+\Prb(\cF_K)\right)\exp(\lambda).\]
The same computations also hold for $\gamma_j^{(l)}$ for $l\in\{2,\dots,d\}$.

\paragraph{Influence of $\beta_j$ for $j\le K$.} Again for $\beta_j$ to be influential, the $k$-th ball has to be present, that is, we have $\prod_{i=0}^{k-1}\alpha_i=1$. This event occurs with probability 
$\Prb( \cP(\lambda g(n))\ge k)$. Set $\overline\eta^{(k)}=\overline\eta\setminus \{(\overline z_k,\overline r_k)\}$.
For $\beta_j$ to be influential the function $r\mapsto \ind_{\overline\eta^{(k)} \cup\{(\overline z_k,r)\}\in\cE}$ must be non-constant on $[n,n+1]$.
Set for $z\in (x+[-1/2,1/2)^ d)$
\[\mathfrak r(z):=\inf\{t\ge n :\overline\eta^{(k)} \cup\{(z,t)\}\in\cE \} .\]
By Claim \ref{claim:reglaw}, it is easy to check that if $\overline r_k\notin [\mathfrak r(\overline z_k)-2^ {-j},\mathfrak r(\overline z_k)+2^ {-j}]$ then $\beta_j$ is not pivotal.
By similar computations as in the previous case, we have
\begin{equation*}
    \begin{split}
    \int_{(z,r)\in\rS_{n}^x}\Prb&\left(\begin{array}{c}\overline\eta^{(k)}\notin \cE,\overline\eta^{(k)}\cup\{(\pi(z),\pi_n(r))\}\in \cE,\\ \pi_n(r)\in[\mathfrak r(\pi(z))-2^ {-j},\mathfrak r(\pi(z))+2^ {-j}]\end{array}\right)dzd\mu_\delta(r)\\
      &\le \int_{(z,r)\in\rS_{n}^x}\Prb\left(\begin{array}{c}\overline\eta\setminus \rS_n^ x\notin \cE,\overline\eta\setminus \rS_n^ x\cup\{(z,n+d)\}\in \cE,\\ r\in[\mathfrak r(\pi(z))-2^ {-j},\mathfrak r(\pi(z))+2^ {-j}+c2^{-K}]\end{array}\right)dzd\mu_\delta(r)\\
         &\le c^22^{-j+2} \frac{g(n)}{g(n+d)}(\mathrm{Piv}_{n+d}^{x,\ep_K}(\cE)+g(n+d)\Prb(\cF_K))\exp(\lambda).
    \end{split}
\end{equation*}
Finally,
\[\Inf_{\beta_j}^ {(x,n,k)}(\cE)\le \Prb( \cP(\lambda g(n))\ge k)c^22^{-j+2}\left( \frac{\mathrm{Piv}_{n+d}^{x,\ep_K}(\cE)}{g(n+d)}+\Prb(\cF_K)\right)\exp(\lambda).\]

\paragraph{Application of inhomogeneous Talagrand.}
First, it is easy to check that $p(k,g(n))$ is increasing in $k$ and
\[\forall k\ge 0\qquad |\log p(k,\lambda g(n))|\le |\log p(0,\lambda g(n))|.\]
Besides, there exists a constant $C_0$ depending only on $\delta_0,\delta_1$, $d$ and $\lambda$ such that for any $n\ge 1$ and $ \delta\in[\delta_0,\delta_1]$
\[ |\log p(0,\lambda g(n))|=-\log \left(1-\exp\left(-\frac{\lambda}{d+\delta}\left(\frac{1}{n^{d+\delta}}- \frac{1}{(n+1)^{d+\delta}}\right)\right)\right)\le C_0\log (n+1).\]
Using the equality 
\[\sum_{k\ge 1}\Prb(\cP(\lambda g(n))\ge k)=\lambda g(n),\] we get
\begin{equation*}
\begin{split}
    \sum_{(x,n)\in(\mathrm B_N\cap\sZ^d)\times[0,N]}&\Bigg(\sum_{k=0}^{K-1}\Big( p(k,\lambda g(n))|\log p(k,\lambda g(n)) |\Inf_{\alpha_k}^ {(x,n)}(\cE)\\
    &\hspace{3cm}+\frac{\log 2}{2}\sum_{j= 1}^ K \Big( \sum_{l=1}^ d\Inf_{\gamma_j^ {(l)}}^ {(x,n,k+1)}(\cE)+ \Inf_{\beta_j}^ {(x,n,k+1)}(\cE)\Big)\Big)\Bigg)\\
    &\le  \lambda e^ \lambda\sum_{(x,n)\in(\mathrm B_N\cap\sZ^d)\times[0,N]}g(n)\Bigg(C_0\log(n+1)\,\left(\frac{\mathrm{Piv}_{n+2d}^ {x,\ep_K}(\cE)}{g(n+2d)}+ \Prb(\cF_K)\right)\\
    &\hspace{3cm}+ 4c^ 2(d^2+1)\log 2\left(\frac{\mathrm{Piv}_{n+d}^ {x,\ep_K}(\cE)}{g(n+d)}+\Prb(\cF_K)\right)\Bigg).
    \end{split}
\end{equation*}
We can now apply Proposition \ref{lem:inhomogeneous Talagrand} and the previous inequality
\begin{equation}\label{eq:eqconc}
    \begin{split}
         &\lambda e^ \lambda C' \sum_{(x,n)\in(\mathrm B_N\cap\sZ^d)\times[0,N]}\log(n+1)  \,(\mathrm{Piv}_{n+2d}^ {x,\ep_K}(\cE)+ g(n)\Prb(\cF_K))\\
        &\ge C\Prb(\overline \eta \in \cE)(1-\Prb(\overline \eta \in \cE))\left|\log \left(\max_{(x,n)\in(\mathrm B_N\cap\sZ^d)\times[d,N]}2g(n)\lambda e^ \lambda c^2\left(\frac{\mathrm{Piv}_{n}^ {x,\ep_K}(\cE)}{g(n+d)}+\Prb(\cF_K)\right)\right)\right|
    \end{split}
\end{equation}
where $C'$ is a constant depending only on $\delta_0,\delta_1$, $\lambda$ and $d$.
Recall that we have the equality in law \eqref{eq:equalitylaw}.
Besides, we have
\[|\Prb(\overline \eta \in \cE)-\Prb( \eta \in \cE)|\le \Prb(\cF_K)+\Prb(\mathrm B(\partial \cE,\ep_K)).\]
Since $\Prb(\eta \in\partial \cE)=0$, we have
\[\lim_{K\rightarrow\infty}\Prb(\overline \eta \in \cE)=\Prb(\eta \in \cE)\,.\]
The result follows by letting $K$ goes to infinity in inequality \eqref{eq:eqconc} and by Lemma \ref{lem:approxpiv}.

\end{proof}

\subsection{Sharp threshold for Boolean percolation}
In this section, we prove a more general version of Proposition \ref{prop:ST} using the result of the previous section.
We say that $\cE$ is rotational invariant if for any rotation $\rho$ of center $0$, we have \[\rho(\cE):=\{ \{(\rho(x),r): (x,r)\in \eta\}: \eta\in E\}=\cE.\]
The following proposition proves a sharp threshold for finite rotational invariant events not depending on the balls centered in $\mathrm B_n$.

\begin{prop} 
Let $\lambda,\delta>0$. There exists $\kappa>0$ such that the following holds. 
For every $\varepsilon\in (0,\delta/2\kappa)$, $n\ge \kappa$  and 
$\cE\subset \sR^d\times \sR_+$ finite increasing event that does not depend on balls centered in $\mathrm B_ n$, such that $\cE$ is rotational invariant and $\Prb_{\delta_0}(\partial\cE)=0$ for any $\delta_0>0$, we have
\begin{equation*}
     \Prb_{{\delta}}(\cE)\ge \frac{1 }{n^ \ep}\implies  \Prb_{{\delta -\kappa \varepsilon}}(\cE)\ge 1- \frac{1 }{n^ \ep}.
\end{equation*}
\end{prop}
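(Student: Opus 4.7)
The plan is to transform Proposition \ref{prop:talpoisson} into a logarithmic differential inequality for $\delta\mapsto\mathbb{P}_{\lambda,\delta}(\cE)$ and then integrate it across an interval of width $\kappa\varepsilon$. Throughout, write $I(z,r):=\mathbb{P}_\delta(\eta\notin\cE,\,\eta\cup\{(z,r)\}\in\cE)$.

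I first derive a Russo--Mecke identity in $\delta$. Since $\partial_\delta d\mu_\delta(r)=-\log r\cdot d\mu_\delta(r)$, the thinning coupling---valid because $\mu_\delta(dr)/\mu_{\delta'}(dr)=r^{\delta'-\delta}\le 1$ for $\delta\ge\delta'$ and $r\ge 1$---combined with a standard first-order expansion and Mecke's formula yields
\[
-\frac{d}{d\delta}\mathbb{P}_\delta(\cE)\;=\;\lambda\int_{\sR^d\times\sR_+}\log r\cdot I(z,r)\,dz\,d\mu_\delta(r).
\]
Breaking the integral over the cells $\rS_m^x$ and using $\log r\ge\log m$ on $[m,m+1)$ bounds the right-hand side below by $\lambda\sum_{x\in\sZ^d,\,m\ge 1}\log m\cdot\mathrm{Piv}_m^x(\cE)$, which is exactly the left-hand side of Proposition~\ref{prop:talpoisson}. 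Applying that proposition gives
\[
-\frac{d}{d\delta}\mathbb{P}_\delta(\cE)\;\ge\;\lambda C_0\,\mathbb{P}_\delta(\cE)\,(1-\mathbb{P}_\delta(\cE))\,\log\Bigl(\frac{1}{\max_{x,m}\mathrm{Piv}_m^x(\cE)}\Bigr).
\]

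The crux is then to show $\max_{x,m}\mathrm{Piv}_m^x(\cE)\le n^{-\beta}$ for some constant $\beta=\beta(\lambda,\delta,d)>0$ independent of $\cE$. The hypothesis that $\cE$ does not depend on balls centered in $\rB_n$ forces $I(z,r)=0$ for $\|z\|\le n$, while the rotational invariance of $\cE$ implies $I(z,r)=\bar I(\|z\|,r)$. For $m$ above a small fractional power of $n$ the bound follows from the tail of $\mu_\delta$: $\mathrm{Piv}_m^x(\cE)\le\mu_\delta([m,m+1))\le m^{-(d+\delta)}$. For smaller $m$, a nonzero pivotal cell must satisfy $\|x\|\ge n-\sqrt d/2$, and rotational invariance identifies its pivotal value with that of $\gtrsim n^{d-1}$ other lattice cells at the same distance from the origin; averaging over this orbit, combined with a Mecke-type control on the total pivotal mass at radius scale $m$, yields the required decay. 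The orbit size $n^{d-1}$ is precisely what forbids any single cell from playing a dictator role.

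With the bound $\max_{x,m}\mathrm{Piv}_m^x(\cE)\le n^{-\beta}$ in hand, the differential inequality becomes $-\frac{d}{d\delta}\log\frac{p(\delta)}{1-p(\delta)}\ge\lambda C_0\beta\log n$ with $p(\delta):=\mathbb{P}_\delta(\cE)$. Choosing $\kappa:=2/(\lambda C_0\beta)$ and integrating between $\delta-\kappa\varepsilon$ and $\delta$, the hypothesis $p(\delta)\ge n^{-\varepsilon}$ gives $\log(p(\delta-\kappa\varepsilon)/(1-p(\delta-\kappa\varepsilon)))\ge\varepsilon\log n$, which rearranges into $p(\delta-\kappa\varepsilon)\ge 1-n^{-\varepsilon}$, as desired. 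The main obstacle is the uniform-in-$\cE$ bound on the maximum pivotal: while the large-$m$ regime reduces to the tail of $\mu_\delta$---this is exactly why the sprinkling must be carried out in $\delta$ rather than in $\lambda$, as decreasing $\delta$ boosts the density of big balls proportionally more than that of the small ones---the small-$m$ regime demands a careful conversion of the rotational symmetry of $\bar I$ into a quantitative control of its supremum, a task made delicate in $d\ge 3$ by the non-existence of a fully rotation-invariant lattice discretisation of the sphere.
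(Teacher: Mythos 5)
Your overall plan matches the paper's: compute the Russo--Mecke derivative in $\delta$, feed it into Proposition~\ref{prop:talpoisson}, and integrate the resulting logistic differential inequality over a $\delta$-interval of width $\kappa\varepsilon$. The derivative formula and the final integration step are both correct.

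The gap is in the middle. You try to establish a uniform bound $\max_{x,m}\mathrm{Piv}_m^x(\cE)\le n^{-\beta}$, with $\beta$ depending only on $\lambda,\delta,d$, and then apply the Talagrand inequality in a single stroke. The paper does not prove such a uniform bound, and your sketch of it does not hold up. Two specific problems. First, you invoke "rotational invariance identifies its pivotal value with that of $\gtrsim n^{d-1}$ other \emph{lattice} cells at the same distance from the origin," but in $d\ge 3$ rotations do not map lattice cells to lattice cells; the orbit consists of $\gtrsim n^{d-1}$ disjoint rotated copies of a cube, not lattice cells, and the identity of pivotal value holds for those rotated copies only. Second and more seriously, the "Mecke-type control on the total pivotal mass at radius scale $m$" is never specified: by Margulis--Russo the total pivotal mass equals $\lambda^{-1}\partial_\lambda\mathbb{P}_\delta(\cE)$, which is not bounded a priori, and the trivial volume bound on the orbit shell gives $\mathrm{Piv}_m^x\lesssim g(m)$, i.e.\ nothing new for small $m$. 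Without an actual upper bound on the shell-wise pivotal mass for small $m$, the claimed decay is not derived.

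The paper avoids the issue entirely with a dichotomy. If some $\mathrm{Piv}_m^x(\cE)\ge n^{-1/2}$, then rotational invariance of $(z,r)\mapsto\Prb_\delta(\eta\notin\cE,\eta\cup\{(z,r)\}\in\cE)$ (applied directly inside the derivative integral, not to lattice cells) produces $\gtrsim n^{d-1}$ disjoint rotated copies of that cell with the same pivotal mass, giving $-\partial_\delta\Prb_\delta(\cE)\gtrsim n^{d-3/2}$, which already dominates $\log n$ for $d\ge 2$ and $n\ge\kappa$, so the logistic inequality holds without Talagrand at all. Otherwise $\max\mathrm{Piv}\le n^{-1/2}$ and Talagrand applies with the factor $\log(1/\max\mathrm{Piv})\ge\tfrac12\log n$. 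This gives the differential inequality in both cases without ever having to prove that the maximal pivotal is uniformly small, which is the claim your argument leaves unsupported. To fix your proof you should replace the uniform bound by the same case distinction.
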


\begin{proof} Let $\lambda,\delta_1>0$. In this proof $\lambda$ is omitted from the notations as it remains fixed.
Set $\delta_0=\delta_1/2$. In what follows, we will work with $\delta\in[\delta_0,\delta_1]$. Let $\kappa$ that will be defined later. Let $n\ge \kappa$ and $\cE$ as in the statement of the proposition.

Let us first compute the derivative of $\delta \mapsto \Prb_{\delta}(\cE)$.
We have
\begin{equation*}
\begin{split}
    \mu_{\delta-t}= \frac{1}{r^{d+1+\delta-t}}\ind_{r\ge 1}dr= r^t\mu_\delta
    &=\left(1+\exp( t \log r) -1 \right)\mu_\delta\\
    &= \left(1 + t\log r + t F_r(t)\right)\mu_\delta
\end{split}
\end{equation*}
where $F_r(t):=(\exp( t \log r) -1)/t -\log r  $.
We have 
\[\lim_{t\rightarrow 0}\sup_{r\in[0,N]}|F_r(t)|=0\,.\]
We can apply Theorem 6.6 of \cite{Last14} to compute the derivative of $\delta \mapsto \Prb_{\delta}(\cE)$, we get
\begin{equation*}
    \frac{\partial}{\partial\delta}\Prb_{\delta}(\cE)=-\lambda\int_{(z,r)\in\sR^d\times \sR_+}\Prb_{\delta}(\eta \notin \cE,\eta\cup\{(z,r)\}\in \cE)\log r \,dzd\mu_\delta(r).
\end{equation*}
We now distinguish two cases.
\paragraph{First case} We assume that there exists $x\in\sZ^d$ and $m\ge 1$ such that $\mathrm{Piv}_m^x(\cE)\ge 1/\sqrt{n}$. Without loss of generality, we can assume that $m\ge 2$ since $\mathrm{Piv}_2^x(\cE)\ge c \mathrm{Piv}_1^x(\cE)$ for some constant $c$ depending on $\delta_1$. Since $\cE$ does not depend on balls centered in $\mathrm B_n$, we have $x\notin \mathrm B_{n-d}$.
It is easy to check that there exist $c_d>0$ depending only on $d$, $\rS_1,\dots ,\rS_K\subset \sR^d$ with $K\ge c_d n^{d-1}$ such that all the $\rS_i$ are disjoint and for every $i\in\{1,\dots,K\}$, there exists a rotation sending $x+[-1/2,1/2)^d$ on $\rS_i$.
By rotation invariance, it follows that
\[\int_{(z,r)\in\rS_i\times[m,m+1)}\Prb_{\delta}(\eta \notin \cE,\eta\cup\{(z,r)\}\in \cE)dzd\mu_\delta(r)\ge \frac{1}{\sqrt{n}}\]
and
 \[-\frac{\partial}{\partial\delta}\Prb_{\delta}(\cE)=\lambda\int_{(z,r)}\Prb_{\delta}(\eta \notin \cE,\eta\cup\{(z,r)\}\in \cE)\log r \,dzd\mu_\delta(r)\ge c_d \lambda \log d \,n^{d-3/2}\,.\]
 
 \paragraph{Second case} We assume that for every $x\in\sZ^d$ and $m\ge d$, we have $\mathrm{Piv}_m^x(\cE)\le 1/\sqrt{n}$.
  By Proposition \ref{prop:talpoisson}, there exists $C_0$ (depending on $\lambda$ and $\delta_1$) such that 
\begin{equation}\label{eq:approp3.4}
\sum_{x\in \mathbb Z^d, k\ge 1}\log k\,\mathrm{Piv}_{k}^ x(\cE) \ge C_0\,\Prb_{\delta}(\cE)(1-\Prb_{\delta}(\cE))\log \left(\frac{1}{\max_{x\in\sZ^d, k\ge 1}\mathrm{Piv}_{k}^ x(\cE)}\right)
\,.
\end{equation}
 We have 
 \begin{equation*}
     \sum_{x\in \mathbb Z^d, k\ge 1}\log k\,\mathrm{Piv}_{k}^ x(\cE)\le -\frac{1}{\lambda}\frac{\partial}{\partial\delta}\Prb_{\delta}(\cE).
 \end{equation*}
 By using inequality \eqref{eq:approp3.4}:
 \begin{equation}\label{eq:derivee}
    -\frac{\partial}{\partial\delta}\Prb_{\delta}(\cE)\ge \frac{2}{\kappa}\Prb_{\delta}(\cE)(1-\Prb_{\delta}(\cE))\log n
\end{equation}
 where $\kappa$ depends on $\lambda$ and $\delta_1$.
 
\paragraph{Conclusion} Finally, set  for $t\in [0, \delta_1/2\kappa] $
\[\delta(t):=\delta_1-t\kappa\]
and
\[f(t):=\Prb_{\delta(t)}(\cE).\]
We have for  $t\in[0,\delta_1/2\kappa]$, $\delta(t)\in[\delta_0,\delta_1]$.
 We have for $t\in[0,\delta_1/2\kappa]$ using inequality \eqref{eq:derivee} for the second case
\begin{equation*}
    \begin{split}
        f'(t)\ge -\kappa\frac{\partial}{\partial\delta}\Prb_{\delta(t)}(\cE)
        \ge 2f(t)(1-f(t))\log n
    \end{split}
\end{equation*}
 Hence, we get
 \[f'(t)\ge  \min (C_d\lambda n^{d-3/2},2 f(t)(1-f(t))\log n)\]
 where $C_d$ depends on $d$.
 Up to increasing $\kappa$, we can assume that for 
$n\ge \kappa$, we have $C_d\lambda n^{d-3/2}\ge\log n$. Hence, for $n\ge \kappa$, we have
 \[f'(t)\ge  2 f(t)(1-f(t))\log n.\]
By integrating between $0$ and $t_0\in[0,\delta_1/2\kappa]$, we obtain
\begin{equation*}
    \log \left (\frac{f(t_0)}{1-f(t_0)}\frac{1-f(0)}{f(0)}\right) \ge2 t_0\log n.
\end{equation*}
It yields
\begin{equation*}
  \frac{ 1}{(1-f(t_0))f(0)}\ge \exp\left ( 2t_0\log n \right)
\end{equation*}
and
\begin{equation*}
  f(t_0)\ge 1-  n^\ep \exp\left ( -2t_0\log n \right).
\end{equation*}
Hence, for $n\ge \kappa$, we have
\begin{equation*}
  \Prb_{\delta_1-\ep \kappa}(\cE)\ge 1- \frac{1}{n^ \ep}.
\end{equation*}
The result follows.
 
\end{proof}

\section{Grimmett--Marstrand dynamic renormalization}\label{sec:GM}
In this section $\mu$ is a general distribution with a finite $d$-moment. We aim to prove Proposition \ref{prop:GM}.
For $B,E,F\subset \mathbb R^d$ and $n\ge 1$, denote
\[\cE_n(B,E,F):=\left\{\exists (x,r)\in \eta \cap(E\times[n,+\infty)):  B\stackrel{\cO(\eta\cap(F\times \sR))}{ \longleftrightarrow} \mathrm B_r^ x\right\}.\]
For short, we will write $\cE_n(E,F)$ instead of $\cE_n(\mathrm B_n,E,F)$.

For $n\ge 1$, we define $\Lambda_n:=[-n,n]^d$.
We will first need the following lemma.

\begin{lem}\label{lem:balltobox}There exists a positive constant $c'_d$ depending only on $d$ such that for $N\ge n\ge 1$, $\ep>0$ such that for any $\rho \in[1,\sqrt{d}+2]$ we have
\begin{equation*}
    \Prb_{\lambda,\mu}(\cE_n(\mathrm B_{(\rho + 1/2)N}\setminus \mathrm B_{\rho N},B_{(\rho + 1/2)N} )\ge 1-\ep,
\end{equation*}
then 
for any $z\in \mathbb R^d$ such that $N\le \|z\|_2\le (\sqrt d +2)N$, we have
\begin{equation*}
    \Prb_{\lambda,\mu}(\cE_n(z+\Lambda_N, \mathrm B_{3\sqrt{d} N}))\ge 1-\ep^{1/c'_d}.
\end{equation*}

\end{lem}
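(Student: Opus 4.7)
The plan is to deduce the lemma from the hypothesis by combining rotational invariance with the FKG inequality for Poisson point processes. Fix $z\in\sR^d$ with $N\le \|z\|_2\le (\sqrt d+2)N$, and choose $\rho\in[1,\sqrt d+2]$ so that $\rho N\le \|z\|_2\le (\rho+1/2)N$. Then the sphere $\partial\mathrm B_{\|z\|_2}$ is contained in the annulus $A:=\mathrm B_{(\rho+1/2)N}\setminus\mathrm B_{\rho N}$, on which the hypothesis provides $\Prb_{\lambda,\mu}(\cE_n(A,\mathrm B_{(\rho+1/2)N}))\ge 1-\ep$. The uniformity of the hypothesis in $\rho$ is precisely what lets me select $\rho$ adapted to $z$.

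The next step is purely geometric. By a standard packing argument on the sphere $\partial\mathrm B_{\|z\|_2}$, I choose finitely many rotations $R_1,\dots,R_K$ about the origin, with $K=K_d$ depending only on $d$, such that the balls $\mathrm B(R_i(z),N/2)$ cover $\partial\mathrm B_{\|z\|_2}$. For any $y\in A$, its radial projection $y'\in\partial\mathrm B_{\|z\|_2}$ satisfies $\|y-y'\|\le N/2$, and $\|y'-R_i(z)\|\le N/2$ for some $i$; hence $\|y-R_i(z)\|\le N$, and since $\mathrm B_N\subset\Lambda_N$ one gets $y\in R_i(z+\Lambda_N)$. In particular $A\subset\bigcup_{i=1}^K R_i(z+\Lambda_N)$. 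Combined with the elementary inequality $(\rho+1/2)N\le 3\sqrt d\,N$ (which makes every connection inside $\mathrm B_{(\rho+1/2)N}$ admissible for the target event), this yields the crucial inclusion
\begin{equation*}
\cE_n\bigl(A,\mathrm B_{(\rho+1/2)N}\bigr)\ \subset\ \bigcup_{i=1}^{K} \cE_n\bigl(R_i(z+\Lambda_N),\mathrm B_{3\sqrt d\,N}\bigr).
\end{equation*}

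From here the conclusion is quick. Each event on the right has the same probability $p:=\Prb_{\lambda,\mu}(\cE_n(z+\Lambda_N,\mathrm B_{3\sqrt d\,N}))$, by rotational invariance of $\Prb_{\lambda,\mu}$ together with that of $\mathrm B_n$ and $\mathrm B_{3\sqrt d\,N}$. The events $\cE_n(\,\cdot\,,\,\cdot\,)$ are increasing in $\eta$, so their complements are decreasing, and the FKG inequality for Poisson point processes gives
\begin{equation*}
\ep\ \ge\ \Prb_{\lambda,\mu}\bigl(\overline{\cE_n(A,\mathrm B_{(\rho+1/2)N})}\bigr)\ \ge\ \Prb_{\lambda,\mu}\Bigl(\bigcap_{i=1}^{K}\overline{\cE_n(R_i(z+\Lambda_N),\mathrm B_{3\sqrt d\,N})}\Bigr)\ \ge\ (1-p)^K,
\end{equation*}
whence $p\ge 1-\ep^{1/K}$, and one may take $c'_d:=K_d$.

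I do not expect any substantial obstacle beyond geometric bookkeeping. The one point worth checking carefully is the dimensional dependence of the covering number $K_d$, which is standard since we cover a sphere of radius at most $(\sqrt d+2)N$ by balls of radius $N/2$, a ratio depending only on $d$; the inequalities on $\rho$ and the containment $\mathrm B_N\subset\Lambda_N$ are elementary. The conceptual content is simply FKG plus rotational invariance applied to the Poisson measure.
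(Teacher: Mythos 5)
Your proof is correct and takes essentially the same approach as the paper: cover the annulus by $O_d(1)$ rotated copies of a fixed region, apply the square-root trick (Harris/FKG plus rotation invariance), and conclude by the monotone inclusions $\mathrm B_N\subset\Lambda_N$ and $\mathrm B_{(\rho+1/2)N}\subset\mathrm B_{3\sqrt d N}$. The only cosmetic difference is that the paper covers the annulus with balls $\mathrm B_N^{x_i}$ centered on $\partial\mathrm B_{\rho N}$ (taking $\rho=\|z\|_2/N$) and enlarges to $z+\Lambda_N$ at the end, whereas you cover directly with rotated boxes $R_i(z+\Lambda_N)$ after a slightly different choice of $\rho$ with $\rho N\le\|z\|_2\le(\rho+1/2)N$; both are valid and yield the same constant up to naming.
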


\begin{proof} Let $\rho\in[1,\sqrt{d}+2]$.
There exists a constant $c'_d$ depending only on $d$ and a family of balls $(\mathrm B_N^ {x_i})_{1\le i \le m_0}$ with $m_0\le c'_d $, such that for all $i\le m_0$, $x_i\in \partial \mathrm B_{\rho N}$ and  \[\mathrm B_{(\rho+1/2) N}\setminus\mathrm B_{\rho N}\subset \bigcup_{1\le i \le m_0}\mathrm B_N^ {x_i} .\]
It follows that
\[ \Prb_{\lambda,\mu}\left(\bigcup_{1\le i \le m_0}\cE_n(\mathrm B_N^ {x_i},\mathrm B_{(\rho+1/2)N})\right)\ge 1-\ep.\]
By invariance under rotation around $0$, and using the square root trick, we have
\begin{equation*}
    \Prb_{\lambda,\mu}(\cE_n(\mathrm B_N+\rho N \textbf{e}_1,\mathrm B_{(\rho+1/2)N}))\ge 1-\ep ^{1/c'_d}
\end{equation*}
where $\textbf{e}_1=(1,0,\dots,0)$.
Finally, we get for any $z\in \mathbb \sR^d$ such that $N\le \|z\|_2\le (\sqrt d +2)N$
\begin{equation*}
    \Prb_{\lambda,\mu}(\cE_n(z+\Lambda_N, \mathrm B_{3\sqrt{d} N}))\ge  \Prb_{\lambda,\mu}(\cE_n(\mathrm B_N+\|z\|_2\textbf{e}_1, \mathrm B_{\|z\|_2+N/2}))\ge 1-\ep ^{1/c'_d}.
\end{equation*}
\end{proof}
\subsection{Sprinkling}
Let $n\ge N$ and $\lambda>0$. 
For $A\subset \sR^d$, we denote by $\Delta A$ the set of all balls intersecting the boundary of $A$, that is,
\[\Delta A:=\left \{(x,r)\in \sR^ d\times \sR_+:\mathrm B_r^ x\cap \partial A \neq \emptyset\right\}.\] In the following sprinkling lemma we consider both the connection with a deterministic set (that correspond to the set $C$) and with a random set (balls in the set $B$).
\begin{lem}[Sprinkling lemma] \label{lem:sprinkling}Let $\xi>0$, $\beta>0$ and $\lambda>0$. Let $A\subset R\subset \sR^d$. Let $B\subset (R\times \sR_+)\setminus \Delta A$. Let $C\subset R\setminus A$. Let us assume that 
\begin{equation}\label{eq:assumptionconnection}
    \Prb_{\lambda,\mu}(A\stackrel {R}{\longleftrightarrow}\cO( B\cap \eta)\cup C)\ge 1-\exp(-3\lambda/\xi)
\end{equation}
then we have
\begin{equation*}
    \Prb(A\stackrel {\cO((\eta\cup \eta')\cap(R\times \sR_+))}{\longleftrightarrow}\cO(B\cap \eta)\cup C\,|\,\eta\cap \Delta A=\emptyset )\ge 1-\exp(-\beta)-\exp(-\lambda/\xi)
\end{equation*}
where $\eta$ (respectively $\eta'$) is a Poisson point process of intensity  $ \lambda dz\otimes \mu$ (respectively  $ \beta\xi dz\otimes \mu$).
\end{lem}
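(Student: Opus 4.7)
The plan is to reduce the conditional probability to an unconditional one via an auxiliary event, and then combine the hypothesis with the sprinkling $\eta'$ through a union-bound/coupling argument. The starting observation is that, by hypothesis, $B\subset (R\times\sR_+)\setminus\Delta A$, so the target set $\cO(B\cap \eta)\cup C$ is measurable with respect to $\eta$ restricted to $(\Delta A)^c$ (together with the deterministic set $C$), and its distribution is therefore unchanged by conditioning on $F:=\{\eta\cap\Delta A=\emptyset\}$. Introducing the auxiliary event
\[H:=\{A\stackrel{R}{\longleftrightarrow}\cO(B\cap\eta)\cup C \text{ using only balls of } \eta\cap(\Delta A)^c \text{ and balls of } \eta'\},\]
one checks that $H$ is measurable with respect to $(\eta\cap(\Delta A)^c,\eta')$ and hence independent of $F$, so that
\[\Prb(G'\mid F)\;\geq\;\Prb(H\mid F)\;=\;\Prb(H),\]
where $G'$ denotes the event whose probability we want to bound. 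It thus suffices to bound $\Prb(H)$ from below in the unconditional law.

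To do so, I would split the sprinkling $\eta'$ into $\lceil\beta\rceil$ independent Poisson sub-processes of intensity $\xi\,dz\otimes\mu$, each viewed as an independent ``replacement attempt'' for the balls of $\eta\cap\Delta A$ that might be used by the hypothesized connection. Applied directly to $\eta$, the hypothesis provides a connection with probability at least $1-\exp(-3\lambda/\xi)$. The factor of three in the exponent is budgeted to absorb two distinct losses: one from a coupling that substitutes a sub-process of $\eta'$ for $\eta\cap\Delta A$, and one from the fact that the connection, as produced by the hypothesis, may use balls in $\Delta A$ whose distribution is killed by $F$. One then argues that either at least one of the $\lceil\beta\rceil$ independent sprinkling attempts succeeds, an event of probability at least $1-\exp(-\beta)$ by independence, or the weakened form of the hypothesis still yields the connection with residual error $\exp(-\lambda/\xi)$. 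A union bound then delivers the desired estimate $1-\exp(-\beta)-\exp(-\lambda/\xi)$.

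The main obstacle lies in the coupling needed to implement the replacement. Because $\beta\xi$ may be arbitrarily smaller than $\lambda$, the process $\eta'|_{\Delta A}$ does not stochastically dominate $\eta|_{\Delta A}$, so a naive substitution of one for the other fails. Instead, one must exploit the geometric structure of $\Delta A$: only a thin sub-region is actually traversed by a hypothesized connection, and one needs to show that each of the $\lceil\beta\rceil$ independent copies of $\eta'$ restricted to this sub-region is, with moderate probability, sufficient to provide a replacement ball. Controlling the joint behaviour of these attempts so that the total failure probability is bounded by $\exp(-\beta)+\exp(-\lambda/\xi)$ is the technical heart of the argument.
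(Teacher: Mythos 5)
Your initial reduction is sound: since $B\subset(R\times\sR_+)\setminus\Delta A$, the target $\cO(B\cap\eta)\cup C$ is $\eta\cap(\Delta A)^c$-measurable, and on the conditioning event $F=\{\eta\cap\Delta A=\emptyset\}$ the conditional probability in question equals the unconditional $\Prb(H)$ for your auxiliary event $H$. The gap is in what comes next. You acknowledge that $\eta'|_{\Delta A}$ does not stochastically dominate $\eta|_{\Delta A}$, and you propose to split $\eta'$ into $\lceil\beta\rceil$ independent copies of intensity $\xi$, each of which should provide a ``replacement ball'' with moderate probability. This does not close: for the union of $\lceil\beta\rceil$ attempts to fail with probability at most $\exp(-\beta)$, each attempt would have to succeed with probability of order one, but nothing in the hypothesis controls the probability that a single intensity-$\xi$ sprinkle in $\Delta A$ reconstitutes a bridge, and $\xi$ may be arbitrarily small relative to $\lambda$. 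The hypothesis~\eqref{eq:assumptionconnection} speaks only to the $\lambda$-process; it carries no information about an intensity-$\xi$ process restricted to a ``thin sub-region'' of $\Delta A$. So the mechanism by which the exponent $\exp(-3\lambda/\xi)$ is transferred into the bound $\exp(-\beta)+\exp(-\lambda/\xi)$ is left entirely open, and this is precisely the content of the lemma.

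The paper's proof takes a genuinely different route, which is worth knowing. It realizes every intensity simultaneously via a single dense Poisson process $\overline\eta$ of intensity $m\lambda_1\,dz\otimes\mu\otimes du$ on $\sR^d\times\sR_+\times[0,1]$, with $\lambda_1=2(\lambda+\beta\xi)$, each point carrying an independent uniform mark $U$. Thinning by $U\le\lambda/\lambda_1$ gives $\eta_\lambda$; thinning by $U\le(\lambda+\beta\xi)/\lambda_1$ gives the sprinkled process, with $\eta_\lambda\subset\eta_{\lambda+\beta\xi}$ automatically. The decisive idea is then not to \emph{replace} a particular bridge ball, but to count the set $W$ of candidate bridge sites in $\Delta A$ whose ball would, if present, connect to $\cO(B\cap\eta_\lambda)\cup C$ through $\eta_\lambda\setminus\Delta A$. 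Crucially, $W$ is determined by $\pi(\overline\eta)$ and the marks outside $\Delta A$, hence is independent of the marks on $W$ itself. Failure of connection forces $(1-\lambda/\lambda_1)^{|W|}$ to be not too small, so the hypothesis~\eqref{eq:assumptionconnection} forces $|W|\ge t$ with high probability for $t=\lambda_1/\xi$. Given $|W|\ge t$ and given $\eta_\lambda\cap\Delta A=\emptyset$, the independent marks give probability at least $1-(1-\beta\xi/\lambda_1)^t\ge 1-\exp(-\beta)$ that at least one $W$-site is activated by the sprinkling. It is the \emph{multiplicity} of candidate bridges, combined with the shared-source coupling, that rescues the argument; your proposal lacks both ingredients.
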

\begin{proof}[Proof of Lemma \ref{lem:sprinkling}]
To prove this lemma, it is more convenient to work in a discrete setting instead of the continuous setting. For that purpose, we introduce a very dense set of points in an enlarged space in a similar way as in \cite{ATT18}. Set $\overline \Omega=\sR^d\times \sR_+\times [0,1]$. Let $\overline \eta $ be a Poisson point process of intensity $m\lambda_1 dz\otimes\mu\otimes du$ where $\lambda_1=2(\lambda+\beta \xi)$.
On top of every point $z$ of $\overline \eta$, we add independently a uniform random variable $U_z$ on $[0,1]$.
Set \[\eta_\lambda :=\left\{(x,r): \exists u\le\frac{1}{m}\quad (x,r,u)\in \overline \eta,\quad U_{(x,r,u)}\le \frac{\lambda}{\lambda_1}\right\}.\]
It is easy to check that the set $\eta_\lambda$ is a Poisson point process of intensity  $ \lambda dz\otimes \mu$.
It follows that 
\begin{equation*}
    \E\left [ \Prb\Big(A\stackrel {\cO(\eta_\lambda\cap(R\times \sR_+))}{\longleftrightarrow}\cO(B\cap \eta_\lambda)\cup C\,|\,\overline\eta\Big)\right]=\Prb_{\lambda,\mu}(A\stackrel {R}{\longleftrightarrow}\cO(B\cap \eta)\cup C).
\end{equation*}
Denote $\pi$ the projection from $\overline \Omega$ to $\mathbb R^d\times\sR_+$
\[\pi(\overline \eta):=\{(x,r):(x,r,u)\in\overline \eta\}.\]
By similar argument than in \cite{ATT18}, we have
\begin{equation}\label{eq:vardense}
   \Var\left( \Prb\Big(A\stackrel {\cO(\eta_\lambda\cap(R\times \sR_+))}{\longleftrightarrow}\cO(B\cap \eta_\lambda)\cup C\,|\,\pi(\overline\eta)\Big)\right)\le \frac{1}{m}.
\end{equation}
Let $W$ be the sites in $(\Delta A\times [0,1/m])\cap \overline\eta$ that are connected to  $\cO(B\cap\eta_\lambda)\cup C$ in $\cO((\eta_\lambda\setminus \Delta A)\cap (R\times \sR_+))$, that is
\begin{equation*}
    W:=\left\{(x,r,u)\in \left(\Delta A\times \left[0,\frac{1}{m}\right]\right) \cap \overline\eta: \mathrm B_r^x \stackrel{\cO((\eta_\lambda\setminus \Delta A)\cap (R\times \sR_+)))}{\longleftrightarrow }\cO(B\cap\eta_\lambda)\cup C\right\}
\end{equation*}
Conditionally on $\pi(\overline\eta)$, the sites in $W$ are independent of $(U_z)_{z\in W}$.
We have
\begin{equation*}
  \Prb\left(A\stackrel {\cO(\eta_\lambda\cap(R\times \sR_+))}{\not\longleftrightarrow}\cO(B\cap\eta_\lambda)\cup C\,\Big|\,\pi(\overline\eta)\right)\ge(1-\lambda/\lambda_1)^{t-1}  \Prb(|W|< t \,|\,\pi(\overline\eta)).
\end{equation*}
Let $\cE_0$ be the following event 
\begin{equation*}
    \cE_0:=\left\{\eta_\lambda\cap\Delta A=\emptyset\right\}.
\end{equation*}
It yields that
\begin{equation*}
    \begin{split}
    \Prb&\Big(A\stackrel {\cO(\eta_{\lambda+\beta\xi}\cap(R\times \sR_+))}{\longleftrightarrow}\cO(B\cap\eta_\lambda)\cup C\,|\,\cE_0,\pi(\overline \eta)\Big)\\&\hspace{3cm}\ge\Prb(\exists z\in W\quad U_{z}\in (\lambda/\lambda_1, (\lambda+\beta \xi)/\lambda_1)], \,|W|\ge t\,|\,\pi(\overline \eta))\\
    &\hspace{3cm}\ge (1-(1-\beta \xi/\lambda_1)^t)\Prb(|W|\ge t\,|\, \pi(\overline \eta))\\
    &\hspace{3cm}\ge (1-(1-\beta \xi/\lambda_1)^t)\left(1-\frac{\Prb\Big(A\stackrel {\cO(\eta_\lambda\cap(R\times \sR_+))}{\not\longleftrightarrow}\cO(B\cap\eta_\lambda)\cup C\,|\,\pi(\overline \eta))\Big)}{(1-\lambda/\lambda_1)^{t-1}}\right).
    \end{split}
\end{equation*}
Finally by choosing $t= \lambda_1/ \xi$ and using $-2x\le \log(1-x)\le -x$ for $x\in[0,1/2)$, we get
\begin{equation*}
\begin{split}
     \Prb&\Big(A\stackrel {\cO(\eta_{\lambda+\beta\xi}\cap(R\times \sR_+))}{\longleftrightarrow}\cO(B\cap\eta_\lambda)\cup C\,|\,\cE_0,\pi(\overline \eta)\Big)\\&\hspace{2cm}\ge (1-\exp(-\beta))\Big(1-\exp(2\lambda/\xi)\Prb\Big(A\stackrel {\cO(\eta_\lambda\cap(R\times \sR_+))}{\not\longleftrightarrow}\cO(B\cap\eta_\lambda)\cup C\,|\,\pi(\overline \eta)\Big)\Big)\\&\hspace{2cm}\ge 1-\exp(-\beta)-\exp(2\lambda/\xi)\Prb\Big(A\stackrel {\cO(\eta_\lambda\cap(R\times \sR_+))}{\not\longleftrightarrow}\cO(B\cap\eta_\lambda)\cup C\,|\,\pi(\overline \eta)\Big).
     \end{split}
\end{equation*}
Using the assumption \eqref{eq:assumptionconnection} and using inequality \eqref{eq:vardense}, for every $m$ large enough such that $\Prb_{\lambda,\mu}(\eta\cap \Delta A=\emptyset)\ge 4/\sqrt{m}$, we have with positive probability using Markov inequality
\begin{equation*}
\begin{split}
     &  \frac{\Prb\Big(A\stackrel {\cO((\eta\cup \eta')\cap(R\times \sR_+))}{\longleftrightarrow}\cO(B\cap\eta)\cup C,\eta\cap \Delta A=\emptyset \Big)+4/\sqrt{m}}{\Prb(\eta\cap \Delta A=\emptyset)-4/\sqrt{m}}\\&\hspace{7cm
    }\ge 1-\exp(-\beta)-\exp(-\lambda/\xi)-\frac{4\exp(2\lambda/\xi)}{\sqrt{m}}
       \end{split}
\end{equation*}
where $\eta$ (respectively $\eta'$) is a Poisson point process of intensity  $ \lambda dz\otimes \mu$ (respectively  $ \beta\xi dz\otimes \mu$).
The result follows by letting $m$ go to infinity.
\end{proof}
\subsection {Dynamic exploration}In this section, we implement a Grimmett--Marstrand dynamic exploration to prove Proposition \ref{prop:GM}.
This section is inspired from \cite{DCKT}.
Fix $\beta\ge 0$ large enough such that
\[\exp(-\beta) \le \frac{1-p_c^{site}}{3}\]
and $\ep_0>0$ small enough such that
\[\ep_0^{1/3} \le \frac{1-p_c^{site}}{3}\]
where $p_c^{site}$ is the critical parameter of Bernoulli site percolation on $\sZ^2$. Thanks to this choice, we have
\begin{equation}\label{eq:pcsite}
    1-\exp(-\beta)-\ep_0^{1/3}>p_c^{site}.
\end{equation}
Let $c'_d$ be the constant of Lemma \ref{lem:balltobox}.
Let us assume that there exists $\ep\le \ep_0^{c'_d}$ and $n\le N$ such that
we have
for every $\rho \in[1,\sqrt{d}+2]$
\begin{equation*}
    \Prb_{\lambda,\mu}(\cE_n(\mathrm B_{(\rho + 1/2)N}\setminus \mathrm B_{\rho N},\mathrm B_{(\rho + 1/2)N} )\ge 1-\ep.
\end{equation*}
From now on, $\ep,n,N$ are fixed such that the previous inequality holds.
By Lemma \ref{lem:balltobox},
for all $z\in \mathbb \sR^d$ such that $N\le \|z\|_2\le (\sqrt d +2)N$, we have
\begin{equation}\label{eq:seedgoodproba}
    \Prb_{\lambda,\mu}(\cE_n(z+\Lambda_N,\mathrm B_{3\sqrt{d} N}))\ge 1-\ep^{1/c'_d}\ge 1-\ep_0.
\end{equation}
For every $x\in\sZ^2$, set \[\Lambda_x :=2Nx+\Lambda_N\subset\sR^ d\qquad\text{and}\qquad\widetilde B_x:=2Nx+\mathrm B_{4\sqrt{d} N}\subset\sR^ d\]
where we identify $x=(x_1,x_2)$ with $(x_1,x_2,0,\dots,0)\in\sZ^d$.
Let $\eta$ be a Poisson point process of intensity $\lambda dz\otimes\mu$.
Let $(\eta^ x, x\in \sZ^2)$ be a family of independent Poisson point process of intensity $\beta \xi dz \textbf{1}_{\widetilde B_x}\otimes \mu$ where $\xi$ is such that
$\ep^{1/c'_d}= \exp(-3\lambda /\xi)$.
We aim to prove that the box $\Lambda_0$ intersects an infinite connected component of balls with positive probability in $\cO(\eta\cup_{x\in\sZ^2}\eta^x)$.
For a configuration $\eta$ we denote by $\sC(\eta)$ the set of balls in $\eta$ that are connected to $\mathrm B_n$ in $\cO(\eta)$, that is,
\[\sC(\eta):=\bigcup \left\{\mathrm B_r^x: (x,r)\in \eta, \mathrm B_n \stackrel{\cO(\eta)}{\longleftrightarrow}\mathrm B_r^x\right\}.\]
Set $X_0:=(A_0,B_0)=(\{0\},\emptyset)$, $\mathcal B_0:=\mathrm B_n$ and $\eta_0:=\eta$. Let $t\ge 0$. We build $\eta_{t+1}$, $\mathcal B_{t+1}$ and $X_{t+1}$ from $\eta_t$, $\mathcal B_t$ and $X_t$ as follows. 
If there is no edge in $(\sZ^2,\E^2)$ connecting $A_t$ to $(A_t\cup B_t) ^c$, then set $X_{t+1}:=X_t$, $\mathcal B_{t+1}:=\mathcal B_{t}$ $\eta_{t+1}:=\eta_t$.
Otherwise, let us denote by $x_{t}$ the extremity in $(A_t\cup B_t)^c$ of this edge (if there are several such edges, we choose one using a deterministic rule) and let $t_0< t$ be such that $x_{t_0}$ is the other extremity of the edge. By definition, we have $x_{t_0}\in A_t$.
Define
\[\eta_{t+1}:=\eta_t \cup \eta^{x_t}\]
and
\[(A_{t+1},B_{t+1}):=\left\{\begin{array}{ll} (A_t\cup\{x_t\},B_t)&\mbox {if the event $\cE_n(\mathcal B_{t_0},\Lambda_{x_t},\widetilde B_{x_t})$ occurs for $\eta_{t+1}$} \\
(A_t,B_t\cup\{x_t\})&\mbox{otherwise.}
\end{array}\right.\]
If $x_t\in A_{t+1}$, to define $\mathcal B_t$, we pick according to some deterministic rule a ball in
\[\left\{\mathrm B_r^ x: \,(x,r)\in \eta_{t+1} \cap(\Lambda_{x_t}\times[n,+\infty)), \, \mathcal B_{t_0}\stackrel{\cO(\eta_{t+1}\cap(\widetilde B_{x_t}\times \sR_+))}{ \longleftrightarrow} \mathrm B_r^ x\right\}.\]
Note that \[\eta_\infty:=\bigcup_{t\ge 0}\eta_t \subset \left(\eta\cup \bigcup_{x\in\sZ^2}\eta^x \right)\] and if the exploration never ends then the ball $\mathrm B_n$ is connected to infinity in $\cO(\eta_\infty)$.
We aim to prove that 
\[\Prb(B_{t+1}=B_t|X_0,\dots,X_t)\ge 1-\exp(-\beta)-\ep ^{1/(3c'_d)}>p_c^{site}.\]
Let us condition on $\sC(\eta_0),\dots,\sC(\eta_t)$.
Let $x_t$ be as defined and $x_{t_0}$ corresponding to the other endpoint. By construction $x_{t_0}\in A_t$ and $\mathcal B_{t_0}$ is connected to  $\mathrm B_n $ in $\cO(\eta_t)$, that is  $\mathcal B_{t_0}\subset \sC(\eta_t)$. It follows that 
\begin{equation*}
\begin{split}
    \Prb&( \cE_n(\mathcal B_{t_0},\Lambda_{x_t},\widetilde B_{x_t}) \text{ occurs for $\eta_{t+1}$}|\sC(\eta_0),\dots,\sC(\eta_t)) 
    \\&\hspace{5cm}=\Prb( \cE_n(\sC(\eta_t),\Lambda_{x_t},\widetilde B_{x_t}) \text{ occurs for $\eta_{t+1}$}|\sC(\eta_t)) .
\end{split}
\end{equation*}
If $ \cE_n(\mathcal B_{t_0},\Lambda_{x_t},\widetilde B_{x_t})$  occurs for $\eta_{t}$ then we are done and $x_t\in \cA_{t+1}$. Otherwise, we want to do a sprinkling to get another chance for this event to occur. 
Let $z\in \Lambda_{x_{t_0}}$. Since $\|x_{t_0}-x_t\|_\infty =1$, it follows that
\[N\le \|z-2Nx_t\|_2\le (\sqrt d +2)N.\]
Note that $\sC(\eta_t)$ contains a ball of radius at least $n$ centered in $\Lambda_{x_{t_0}}$, thanks to inequality \eqref{eq:seedgoodproba}, it follows that
\[\Prb(\cE_n(\sC(\eta_t),\Lambda_{x_t},\widetilde B_{x_t})\text{ occurs for $\eta'$}|\sC(\eta_t))\ge 1-\ep^{1/c'_d}\]
where $\eta'$ is an independent Poisson point process of intensity $\lambda dz\otimes \mu$.
Note that when we condition on $\sC(\eta_t)$ the points in $\eta_t$ such that the corresponding ball does not intersect $\sC(\eta_t)$ are unexplored and $\eta_t \cap \Delta \sC(\eta_t)=\emptyset$.
\begin{equation*}
\begin{split}
    \Prb&(\cE_n(\sC(\eta_t),\Lambda_{x_t},\widetilde B_{x_t})\text{ occurs for $\eta_{t+1}$}|\sC(\eta_t))\\
    &\qquad=\Prb(\cE_n(\sC(\eta_t),\Lambda_{x_t},\widetilde B_{x_t})\text{ occurs for $\eta'\cup\eta^ {x_t}$}|\,\eta' \cap \Delta \sC(\eta_t)=\emptyset,\sC(\eta_t))
    \end{split}
\end{equation*}
where $\eta'$ is an independent Poisson point process of intensity $\lambda dz\otimes \mu$.
By using the Lemma \ref{lem:sprinkling} conditioning on $\sC(\eta_t)$ (for $A=\sC(\eta_t) $, $B=(\Lambda_{x_t}\times[n,+\infty))\setminus \Delta \sC(\eta_t)$ and $C=\emptyset$) and inequality \eqref{eq:pcsite}, it follows that 
\[ \Prb( \cE_n(\mathcal B_{t_0},\Lambda_{x_t},\widetilde B_{x_t}) \text{ occurs for $\eta_{t+1}$}|\sC(\eta_0),\dots,\sC(\eta_t))\ge 1-\exp(-\beta)-\ep^{1/(3c'_d)}>p_c^{site}.\]
We need the following lemma to conclude. 
\begin{lem}[Lemma 9 in \cite{DCKT}]\label{lem:explor}Let $X_t = (A_t, B_t)$ be a random exploration sequence and assume that there exists some $q >p_c^{site}$ such that for every $t\ge 0$
\[\Prb(B_{t+1}=B_t|X_0,\dots,X_t)\ge q\quad \text{a.s.,}\]
then the process X percolates with probability larger than a constant $c = c(q) >0$ that
can be taken arbitrarily close to 1 provided that q is close enough to 1.
\end{lem}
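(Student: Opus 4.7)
The plan is to couple the abstract exploration $(X_t)_{t\ge 0}$ with an i.i.d.\ Bernoulli site percolation on $\sZ^2$ at parameter $q$, in such a way that the cluster of the origin in the Bernoulli model is always contained in $A_\infty := \bigcup_t A_t$. The conclusion then follows from the classical facts that for $q > p_c^{\mathrm{site}}(\sZ^2)$ the cluster of the origin is infinite with probability $\theta(q) > 0$, and that $\theta(q) \to 1$ as $q \to 1$ (a standard Peierls argument on the closed sites).

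To construct the coupling, I would enlarge the probability space to carry an i.i.d.\ family $(U_t)_{t\ge 0}$ of Uniform$[0,1]$ random variables independent of everything else. Writing $p_t := \Prb(x_t \in A_{t+1} \mid X_0,\dots,X_t) \ge q$, one realizes the step-$t$ decision via $\{x_t \in A_{t+1}\} = \{U_t \le p_t\}$ without changing the joint law of $(X_t)$. For every vertex $x$ revealed at some time $t$ in the exploration, set $Y_x := \mathbf 1\{U_t \le q\}$; for vertices never revealed, sample $Y_x$ from an independent fresh Bernoulli$(q)$ field. Since $U_t$ is independent of the past, $Y_{x_t}$ is Bernoulli$(q)$ conditional on the history, and therefore $(Y_x)_{x\in\sZ^2}$ is an i.i.d.\ Bernoulli$(q)$ random field. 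Crucially, $Y_{x_t} = 1$ forces $U_t \le q \le p_t$, hence $x_t \in A_{t+1}$.

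With the coupling in place, let $\mathcal C$ denote the cluster of $0$ in $(Y_x)_{x\in\sZ^2}$. I claim that whenever the exploration terminates at a finite time $T$, one has $\mathcal C \subseteq A_T$, so $\mathcal C$ is finite. Indeed, at termination there is no edge from $A_T$ to $(A_T \cup B_T)^c$, so every vertex adjacent to $A_T$ lies in $A_T \cup B_T$; along any open path $0 = y_0 \sim y_1 \sim \cdots \sim y_k = y$ in $(Y_x)$, an induction yields $y_i \in A_T$, because $y_{i-1}\in A_T$ implies $y_i \in A_T \cup B_T$, and $y_i \in B_T$ would force $Y_{y_i} = 0$ by the coupling, contradicting $Y_{y_i} = 1$. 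Taking the contrapositive, $|\mathcal C| = \infty$ forces $|A_\infty| = \infty$, i.e.\ $X$ percolates, so
\[ \Prb(X \text{ percolates}) \;\ge\; \Prb(|\mathcal C| = \infty) \;=\; \theta(q), \]
which is positive for $q > p_c^{\mathrm{site}}$ and converges to $1$ as $q \to 1$.

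The main obstacle is a careful measure-theoretic setup of the coupling: one must verify that the uniforms $(U_t)$ can be chosen to simultaneously reproduce the conditional law of the step-$t$ decision of the exploration and to yield an i.i.d.\ Bernoulli$(q)$ field $(Y_x)$. Once this is done, the domination $\mathcal C \subseteq A_\infty$ and the conclusion follow by bookkeeping together with the standard input from planar site percolation.
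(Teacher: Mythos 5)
The paper states this result as a direct quotation of Lemma~9 from \cite{DCKT} and does not reprove it, so there is no in-paper argument to compare against; evaluated on its own, your coupling with i.i.d.\ Bernoulli$(q)$ site percolation is the standard proof and is correct in substance. Two small points should be tightened. First, the cleanest way to dispense with the measure-theoretic worry you flag is to index the auxiliary uniforms by \emph{vertices} rather than by time steps: take $(V_x)_{x\in\sZ^2}$ i.i.d.\ Uniform$[0,1]$ and realize the step-$t$ decision as $\{x_t\in A_{t+1}\}=\{V_{x_t}\le p_t\}$. Since $x_t$ and $p_t$ are determined by $X_0,\dots,X_t$, hence by $V_{x_0},\dots,V_{x_{t-1}}$, while $V_{x_t}$ is independent of these (each vertex is revealed at most once), this reproduces the conditional law of the exploration, and $Y_x:=\mathbf 1\{V_x\le q\}$ is i.i.d.\ Bernoulli$(q)$ by construction rather than via a history-dependent relabeling of the $(U_t)$. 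Second, the contrapositive of the implication you actually prove is ``$|\mathcal C|=\infty\Rightarrow T=\infty$'', and you still need ``$T=\infty\Rightarrow |A_\infty|=\infty$'' to conclude that $X$ percolates; this does hold (if $A_\infty$ were finite, all vertices ever revealed after $A_t$ stabilizes would belong to the finite outer boundary of $A_\infty$, forcing termination), but it should be stated rather than absorbed into the word ``contrapositive.''
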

We can deduce that with positive probability, there exists an infinite connected component in $\cO(\eta_{\infty})$. Note that there exists a constant $\kappa_d$ depending only on the dimension $d$ such that $\eta_\infty$ is stochastically dominated by a Poisson point process of intensity $(\lambda+\kappa_d\beta \xi)dz\otimes \mu$. Hence,
\[\lambda\left(1+ \frac{3\kappa_dc'_d\beta}{-\log \ep}\right)=\lambda+\kappa_d\beta \xi\ge \lambda_c(\mu).\]
The result follows.

\section{Supercritical sharpness: proof of Theorem \ref{thm:main2}}\label{sec:main2}
In this section, we prove Theorem \ref{thm:main2} by adapting the seedless Grimmett--Marstrand strategy in \cite{DCKT} to the Boolean context. We refer to \cite{DCKT} as most of the arguments will only be sketched.
In what follows, we fix $\mu$ a distribution on the radius that has a finite $d$-moment.
Let $\eta$ be a Poisson point process of intensity $\lambda dz\otimes \mu$.
Let $k\le K$. Denote by $A_2(k,K)$ the event where there exists at least two disjoint connected components in $\cO(\eta \setminus \cB(\Lambda_K))\cap \Lambda_K$ that intersect both $\Lambda_k$ and $\partial \Lambda_K$ where
\[\cB(\Lambda_K):=\left\{(x,r)\in(\sR^d\times\sR_+)\setminus (\Lambda_{2K}\times [0,K]): \mathrm B_r^x\cap \Lambda_K\ne\emptyset\right\}.\]
The set $\cB(\Lambda_K)$ may be thought as a set of bad balls that are typically not present when $K$ is large. We will need the following claim that follows from similar computations as in the inequality \eqref{eq:interbigball}.
\begin{claim}\label{claim:badballs}For all $\lambda>0$. We have 
\[\lim_{K\rightarrow\infty}\Prb_{\lambda,\mu}(\eta\cap\cB(\Lambda_K)\ne\emptyset)=0.\]
\end{claim}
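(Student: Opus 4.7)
My plan is to bound the probability by the expected number of bad balls and then estimate the Lebesgue$\otimes\mu$ measure of $\cB(\Lambda_K)$ directly, splitting according to whether the radius exceeds $K$ or not.

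First, since $|\eta\cap\cB(\Lambda_K)|$ is a Poisson random variable with parameter $\lambda\,(dz\otimes\mu)(\cB(\Lambda_K))$, Markov's inequality gives
\begin{equation*}
    \Prb_{\lambda,\mu}(\eta\cap\cB(\Lambda_K)\ne\emptyset)\le \lambda\,(dz\otimes\mu)(\cB(\Lambda_K)),
\end{equation*}
so it suffices to show that the right-hand side tends to $0$.

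Next I would split $\cB(\Lambda_K)$ according to the value of the radius. If $(x,r)\in\cB(\Lambda_K)$ with $r\le K$, then by the definition of $\cB(\Lambda_K)$ we must have $x\notin\Lambda_{2K}$, which forces $\|x\|_\infty>2K$; on the other hand $\mathrm B_r^x\cap\Lambda_K\neq\emptyset$ and $r\le K$ imply that $x$ lies within $\ell^2$-distance $K$ from $\Lambda_K$, hence $\|x\|_\infty\le 2K$, a contradiction. So the contribution of radii $r\le K$ is empty. For radii $r>K$, the constraint $\mathrm B_r^x\cap\Lambda_K\ne\emptyset$ alone bounds the volume of admissible centers by $C_d(K+r)^d$ for some dimensional constant $C_d$. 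Hence
\begin{equation*}
    (dz\otimes\mu)(\cB(\Lambda_K))\le \int_{(K,+\infty)} C_d(K+r)^d\,d\mu(r)\le C_d'\int_{(K,+\infty)}r^d\,d\mu(r),
\end{equation*}
where we used $K<r$ to absorb $K$ into $r$ up to a constant.

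The main (and only) ingredient is then the moment assumption \eqref{cond:mu}: since $\int_{\sR_+}r^d\,d\mu(r)<\infty$, dominated convergence (or monotone convergence applied to the tail) gives $\int_{(K,+\infty)}r^d\,d\mu(r)\to 0$ as $K\to\infty$, which concludes the argument. I do not anticipate any real obstacle here; this is a direct repeat of the tail estimate already performed in~\eqref{eq:interbigball}, only slightly rearranged to match the definition of $\cB(\Lambda_K)$.
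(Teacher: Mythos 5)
Your proof is correct and matches the route the paper gestures at (the paper simply says the claim ``follows from similar computations as in the inequality \eqref{eq:interbigball}'', without spelling them out). Your observation that the $r\le K$ contribution is empty is exactly what the geometry of $\cB(\Lambda_K)$ is designed to ensure, and bounding the remaining intensity by $C_d'\int_{(K,\infty)}r^d\,d\mu(r)$ and letting the tail of the finite $d$-moment go to zero is precisely the computation the paper has in mind.
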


\begin{prop}\label{prop:connectionset}There exists some $C>0$ and $\ep_0>0$ (depending on $d$ only) such that the following holds.
Let $\ep\in(0,\ep_0)$ and let 
$1\le k\le K\le n\le N$ such that $K\le \ep ^2 n $ and
\begin{enumerate}[(a)]
\item\label{cond:a} $\Prb_{\lambda,\mu}\left( 0\stackrel{\cO(\eta \setminus (\cB(\Lambda_K)\cup\cB(\Lambda_N)))}{\longleftrightarrow}\partial \Lambda_ N\right)\ge \ep$ and $\Prb_{\lambda,\mu}\left( 0\stackrel{\cO(\eta \setminus \cB(\Lambda_K))}{\longleftrightarrow}\partial \Lambda_ K\right)\ge \ep$,
\item\label{cond:b} $\Prb_{\lambda,\mu}\left( \Lambda_k\stackrel{\cO(\eta \setminus (\cB(\Lambda_K)\cup\cB(\Lambda_N)))}{\longleftrightarrow}\partial \Lambda_ N\right)\ge 1-\exp\left(-\frac{1}{\ep}\right)$,
\item\label{cond:c} $\Prb_{\lambda,\mu}(A_2(k,K))\le \exp\left(-\frac{1}{\ep}\right)$ and $\Prb_{\lambda,\mu}(A_2(n,N))\le \exp\left(-\frac{1}{\ep}\right)$.
\end{enumerate}
Then, we have
\[\lambda+C\ep\ge \lambda_c^{S_{4dN}}(\mu|_{[0,N]}).\]
\end{prop}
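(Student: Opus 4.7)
The plan is to adapt the seedless Grimmett--Marstrand renormalization of \cite{DCKT} to Boolean percolation, running a dynamic exploration inside the slab $S_{4dN}=\sR^2\times[-4dN,4dN]^{d-2}$ with radii truncated to $[0,N]$. Because balls have radius at most $N$ and the $d-2$ thin directions of the slab have width $8dN$, the enlarged boxes we introduce below, once centered on $\sZ^2\times\{0\}$, stay inside the slab. For each $x\in\sZ^2$, identified with $(x_1,x_2,0,\dots,0)\in\sZ^d$, consider the nested boxes $\Lambda_x^N:=2Nx+\Lambda_N$, $\widetilde\Lambda_x^N:=2Nx+\Lambda_{2N}$ (and their scaled-down analogues at scales $k,K,n$). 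Adjacent enlarged boxes $\widetilde\Lambda_x^N$ and $\widetilde\Lambda_y^N$ overlap in a region of macroscopic size $N$, which is the mechanism by which clusters get patched across the renormalized lattice.

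I would run a dynamic exploration $X_t=(A_t,B_t)$ on $\sZ^2$, starting from $A_0=\{0\}$, $B_0=\emptyset$, in the spirit of Section \ref{sec:GM}. At step $t$, an edge connecting $A_t$ to $(A_t\cup B_t)^c$ picks a new site $x_t$ with parent $x_{t_0}\in A_t$, and an independent sprinkling $\eta^{x_t}$ of intensity $C\varepsilon\,dz\otimes\mu|_{[0,N]}$ supported on $\widetilde\Lambda_{x_t}^N$ is revealed on top of the current configuration $\eta_t$. The site $x_t$ is added to $A_{t+1}$ precisely when the currently traced cluster $\sC_{t_0}$, which by induction contains a macroscopic piece of $\Lambda_{x_{t_0}}^N$ reaching $\partial\widetilde\Lambda_{x_{t_0}}^N$, is connected inside $\cO(\eta_t\cup\eta^{x_t})\cap\widetilde\Lambda_{x_t}^N$ to a cluster of $\cO(\eta_t\cup\eta^{x_t})$ that crosses from $\Lambda_{x_t}^N$ to $\partial\widetilde\Lambda_{x_t}^N$. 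Otherwise $x_t\in B_{t+1}$. Condition (a) provides the initial seed at $x=0$ with probability at least $\varepsilon$, after which the dynamic proceeds.

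The crux is to prove a bound of the form
\[\Prb(x_t\in A_{t+1}\mid X_0,\ldots,X_t)\ge q,\qquad q>p_c^{\mathrm{site}},\]
uniformly in $t$. After conditioning on the revealed part of the configuration, translating to $x_t$ and applying condition~(b) inside $\widetilde\Lambda_{x_t}^N$, an independent copy of the process produces, with probability at least $1-\exp(-1/\varepsilon)$, a cluster $\sC'$ that crosses from $\Lambda_{x_t}^N$ to $\partial\widetilde\Lambda_{x_t}^N$. The main obstacle is the \emph{matching} step: a priori $\sC'$ may be disjoint from the extension of $\sC_{t_0}$ into $\widetilde\Lambda_{x_t}^N$. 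This is where condition~(c) enters. The two-arm bound at scale $(n,N)$ ensures, up to an event of probability $\exp(-1/\varepsilon)$, that there is a unique macroscopic cluster crossing the overlap $\widetilde\Lambda_{x_{t_0}}^N\cap\widetilde\Lambda_{x_t}^N$, forcing $\sC'$ to coincide with the continuation of $\sC_{t_0}$; the smaller scale $(k,K)$ plays the analogous role when the exploration is initialized from the seed produced by condition~(a). The residual gap between $\sC_{t_0}$ and $\sC'$ is closed by applying the sprinkling Lemma \ref{lem:sprinkling} with $\xi=C\varepsilon$ and $\beta$ large, yielding an extra multiplicative cost $1-\exp(-\beta)-\exp(-\lambda/\xi)$. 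The bad-ball sets $\cB(\Lambda_K),\cB(\Lambda_N)$ entering the hypotheses are harmless: by Claim \ref{claim:badballs} and a union bound over explored sites, they contribute $o(1)$ to the error, which can be absorbed in $\exp(-1/\varepsilon)$ for $\varepsilon$ small enough.

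Combining these estimates gives $q\ge 1-C'(\exp(-1/\varepsilon)+\exp(-\beta))$, which exceeds $p_c^{\mathrm{site}}$ for $\varepsilon\in(0,\varepsilon_0)$ and $\beta$ large. Lemma \ref{lem:explor} then implies that the renormalized exploration percolates on $\sZ^2$ with positive probability, producing an unbounded connected component in $\cO(\eta\cup\bigcup_{x\in\sZ^2}\eta^x)\cap S_{4dN}$. Since this enlarged process is stochastically dominated by a Poisson point process of intensity $(\lambda+C\varepsilon)\,dz\otimes\mu|_{[0,N]}$ on the slab, we conclude $\lambda+C\varepsilon\ge\lambda_c^{S_{4dN}}(\mu|_{[0,N]})$, as required. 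The main technical difficulty throughout is the uniqueness argument combining the two-arm estimate with sprinkling in a continuum setting; apart from this and some bookkeeping about bad balls, every other ingredient is a direct translation of \cite{DCKT} into the Boolean framework.
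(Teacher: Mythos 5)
Your high-level architecture—a dynamic exploration on $\sZ^2$ inside the slab, a sprinkling step via Lemma~\ref{lem:sprinkling}, and Lemma~\ref{lem:explor} to conclude percolation—matches the paper's. But the proposal misses the central intermediate result that the paper introduces precisely to make the step-propagation estimate work, namely Lemma~\ref{lem:connectionset}: for \emph{any} connected set $S\ni 0$ of diameter $\ge n$, one has $\Prb_{\lambda,\mu|_{[0,N]}}(S\longleftrightarrow F(N))\ge 1-2\exp(-c/\ep)$ for a fixed face $F(N)$ of $\partial\Lambda_N$. The proof of that lemma is where conditions~(a), (b), (c) and the scale separation $K\le\ep^2 n$ are actually combined: one chooses $\ge c_1/\ep^2$ points $x_1,\dots,x_l$ on $S$ with the boxes $x_i+\Lambda_{2K}$ pairwise disjoint (this is exactly why $K\le\ep^2 n$ is needed), uses~(a) to get independent local arms $x_i\leftrightarrow x_i+\partial\Lambda_K$ with probability $\ge\ep$ each, uses~(b) to connect $x_i+\Lambda_k$ to $\partial\Lambda_N$, and uses~(c) at scale $(k,K)$ to glue these two arms inside $x_i+\Lambda_K$. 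Independence of the local seeds over $i$ then beats $\exp(-c/\ep)$, and FKG plus the square-root trick localizes the exit to a fixed face. Your proposal never invokes $K\le\ep^2 n$ at all, which is a strong sign that the mechanism is different from what is actually required.

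Concretely, the weak point in your outline is the ``matching'' step. You propose to translate condition~(b) to $\widetilde\Lambda_{x_t}^N$ to produce a crossing cluster $\sC'$ and then invoke the two-arm bound $A_2(n,N)$ to force $\sC'$ to coincide with the continuation of the explored cluster $\sC_{t_0}$. This does not go through as stated: condition~(b) asserts $\Lambda_k\leftrightarrow\partial\Lambda_N$ (a one-arm event from a \emph{small} central box, not a box-to-boundary crossing of the enlarged box), and the event $A_2(n,N)$ controls two disjoint arms from $\Lambda_n$ to $\partial\Lambda_N$ \emph{in a fixed box}, not uniqueness of macroscopic crossings in the overlap of two translates. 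Moreover, a priori the explored cluster $\sC_{t_0}$ may touch the new box only in a tiny region, so there is no macroscopic crossing of the overlap to be unique with; the paper circumvents this by working with the \emph{entire} explored set $S$ (which has diameter $\ge N\ge n$ by construction) and extracting $\sim 1/\ep^2$ seeds from it, rather than trying to match one crossing with another. You would need to replace your matching argument with something equivalent to Lemma~\ref{lem:connectionset}, and in particular explain where condition~(a) enters at every step $t>0$ and where the constraint $K\le\ep^2 n$ is used; as written, those two ingredients play no role in your proof, which is a genuine gap.
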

Let us first explain how we can deduce Theorem \ref{thm:main2} from Proposition \ref{prop:connectionset}.
We will need the following lemma which is a consequence of the work of Meester and Roy \cite{MeesterRoy94Uniqueness} who proved uniqueness of the infinite cluster in Boolean percolation.
\begin{lem}\label{lem:twoarm} Let $\lambda>\lambda_c(\mu)$.
We have
\[\lim_{n\rightarrow\infty }\lim_{N\rightarrow\infty }\Prb_{\lambda,\mu}(A_2(n,N))=0.\]
\end{lem}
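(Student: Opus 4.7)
The plan is to combine the Meester--Roy uniqueness of the infinite cluster with the vanishing-probability of bad balls provided by Claim \ref{claim:badballs}, then split according to whether the two disjoint components witnessing $A_2(n,N)$ belong to finite or infinite clusters of the full occupied set $\cO(\eta)$.

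First, by Claim \ref{claim:badballs}, $\Prb_{\lambda,\mu}(\eta \cap \cB(\Lambda_N) \ne \emptyset) \to 0$ as $N \to \infty$, so it suffices to bound the probability that there exist two disjoint components $C_1, C_2$ of $\cO(\eta) \cap \Lambda_N$, each intersecting $\Lambda_n$ and $\partial \Lambda_N$. Denote by $\hat C_i$ the (possibly larger) component of the unrestricted $\cO(\eta)$ containing $C_i$. A useful first observation is that, almost surely, only finitely many clusters of $\cO(\eta)$ meet $\Lambda_n$: the intensity of balls $(x,r) \in \eta$ with $\mathrm B_r^x \cap \Lambda_n \ne \emptyset$ is bounded by $c\lambda \int (n+r)^d \, d\mu(r)$, which is finite by \eqref{cond:mu}.

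I then distinguish two cases. If at least one $\hat C_i$ is a finite cluster of $\cO(\eta)$, it touches $\Lambda_n$ and has diameter at least $N-n$; since only finitely many clusters touch $\Lambda_n$ and each finite one has a.s. finite diameter, the maximum diameter $R_n^{\text{fin}}$ of such a finite cluster is a.s. finite, so this scenario has probability tending to $0$ as $N \to \infty$. Otherwise both $\hat C_1, \hat C_2$ are infinite, and Meester--Roy uniqueness forces $\hat C_1 = \hat C_2 = \mathcal I$, the a.s.-unique infinite cluster. Then $C_1, C_2$ are two distinct components of $\mathcal I \cap \Lambda_N$ both meeting $\Lambda_n$. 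Let $K_N$ be the number of components of $\mathcal I \cap \Lambda_N$ touching $\Lambda_n$: adding balls as $N$ grows can only merge components, so $K_N$ is a.s. non-increasing; and the finitely many balls of $\mathcal I$ meeting $\Lambda_n$ are pairwise connected by continuous paths within $\mathcal I$, each of which is compact and hence contained in some $\Lambda_{N_0}$. Therefore $\{K_N \ge 2\}$ is a decreasing sequence of events with a.s.-empty intersection, so $\Prb(K_N \ge 2) \to 0$ as $N \to \infty$.

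Combining the three vanishing contributions yields $\lim_{N \to \infty} \Prb_{\lambda,\mu}(A_2(n,N)) = 0$ for every fixed $n$, which already implies the iterated-limit statement. The main subtlety is simply to invoke Meester--Roy's uniqueness theorem in our setting and to properly justify the path-connectivity argument inside the infinite cluster; both points go through because $\cO(\eta)$ is a countable union of closed balls and the finite-energy / ergodic ingredients behind uniqueness are available under the moment assumption \eqref{cond:mu}.
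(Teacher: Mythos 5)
Your proof is correct and follows essentially the same route as the paper: both first control the bad-ball contribution via Claim~\ref{claim:badballs} and then invoke Meester--Roy uniqueness. The paper compresses the remaining argument into the set identity $\cF = \bigcup_{n}\bigcap_{N}\cE_{n,N}$ (with $\cF$ the two-infinite-clusters event), while you unpack exactly the same content through the finite/infinite case split, the a.s.\ finiteness of the set of clusters meeting $\Lambda_n$, and the compactness argument merging the pieces of the unique infinite cluster inside a bounded box.
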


\begin{proof}[Proof of Theorem \ref{thm:main2}]
Let $\lambda>\lambda_c(\mu)$. Let $\ep_0$ and $C$ be as in the statement of Proposition \ref{prop:connectionset}. There exists $\ep\in(0,\ep_0)$ such that
\[\inf_{n\ge 1}\Prb_{\lambda,\mu}(0\longleftrightarrow\partial \Lambda_n)\ge 2\ep.\]
Moreover, thanks to Claim \ref{claim:badballs}, Lemma \ref{lem:twoarm} and the existence of an infinite cluster in the supercritical regime, there exists $n_0\ge 1$ large enough such that
\[\forall n\ge n_0\qquad \Prb_{\lambda,\mu}(\eta\cap\cB(\Lambda_n)\ne\emptyset)\le \frac{1}{4}\exp\left(-\frac{1}{\ep}\right),\]
\[\forall n\ge n_0 \qquad \inf_{N\ge n_0}\Prb_{\lambda,\mu}(A_2(n,N))\le \frac{1}{2}\exp\left(-\frac{1}{\ep}\right)\]
and
\[\forall n\ge n_0\qquad \inf_{N\ge n }\Prb_{\lambda,\mu}(\Lambda_{n}\longleftrightarrow\partial \Lambda_N)\ge 1-\frac{1}{2}\exp\left(-\frac{1}{\ep}\right).\]
Let $k=n_0$, $K$ be the first integer such that 
\[\Prb_{\lambda,\mu}(A_2(k,K))\le \exp\left(-\frac{1}{\ep}\right),\]
let $n$ be such that $n\ge2 K/\ep^2$ and $N$ be the first integer such that
\[\Prb_{\lambda,\mu}(A_2(n,N))\le \exp\left(-\frac{1}{\ep}\right).\]
It follows that the conditions \ref{cond:a}, \ref{cond:b} and \ref{cond:c} hold and by Proposition \ref{prop:connectionset}, there exists $C$ depending only on $d$ such that
\[\lambda+C\ep\ge \lambda_c^{S_{4dN}}(\mu|_{[0,N]}).\]
It follows that 
\[\lambda+C\ep\ge \inf_{N\ge 1}\lambda_c^{S_{N}}(\mu|_{[0,N]})\ge \lambda_c(\mu).\]
The result follows by letting $\ep$ go to $0$ and $\lambda$ go to $\lambda_c(\mu)$.
\end{proof}

Let us now sketch the proof of Proposition \ref{prop:connectionset}.
The following lemma is proved in the context of percolation in \cite{DCKT} it is a key part to prove Proposition \ref{prop:connectionset}. 
\begin{lem}\label{lem:connectionset}Assume that the conditions \ref{cond:a}, \ref{cond:b} and \ref{cond:c} hold. Then there exists some $c>0$ (depending on $d$ only) such that for every connected set $S\ni 0$ with a diameter larger than
$n$,
\[\Prb_{\lambda,\mu|_{[0,N]}}(S\longleftrightarrow F(N))\ge 1-2 \exp\left(-\frac{c}{\ep}\right)\]
where $F(N) := \{(x_1,\dots,x_d)\in\partial \Lambda_N : x_1 = N, x_2\ge0,\dots,x_d\ge 0\}$.
\end{lem}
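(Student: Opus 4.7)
My plan is to adapt the seedless Grimmett--Marstrand scheme of \cite{DCKT} to the Boolean setting by combining assumptions (\ref{cond:a}), (\ref{cond:b}), (\ref{cond:c}) in three stages. In brief, (\ref{cond:b}) together with symmetry gives a crossing from $\Lambda_k$ to the specific orthant face $F(N)$; (\ref{cond:c}) at scale $(n,N)$ turns this crossing into uniqueness of a macroscopic cluster $C^{\star}$; and (\ref{cond:a}) used along $S$ produces many local seeds which (\ref{cond:c}) at the smaller scale $(k,K)$ forces to merge with $C^{\star}$.

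The event $\{\Lambda_k \longleftrightarrow \partial \Lambda_N\}$ is invariant under the $2^d$ coordinate reflections of $\Lambda_N$, and $\partial \Lambda_N$ decomposes into $2^d$ images of the orthant face $F(N)$ under these reflections. Iterating the square-root trick $d$ times would upgrade (\ref{cond:b}) into
\[\Prb_{\lambda,\mu}\bigl(\Lambda_k \stackrel{\cO(\eta \setminus \cB(\Lambda_N))}{\longleftrightarrow} F(N)\bigr) \ge 1 - \exp(-c_d/\ep)\]
for some dimensional constant $c_d > 0$. Combining this with the two-arm bound from (\ref{cond:c}) at scale $(n,N)$ produces an event $\Omega^{\star}$ of probability at least $1 - 2\exp(-c/\ep)$ on which there is a unique cluster $C^{\star} \subset \cO(\eta) \cap \Lambda_N$ meeting both $\Lambda_n$ and $\partial \Lambda_N$, and moreover $C^{\star}$ reaches $F(N)$. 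On $\Omega^{\star}$ it suffices to show $S \cap C^{\star} \neq \emptyset$ with probability at least $1 - \exp(-c/\ep)$.

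Since $S$ is connected, contains $0$, and has diameter at least $n$, I would extract a chain $z_0 = 0, z_1, \dots, z_m$ of points of $S$ with pairwise disjoint boxes $\Lambda_K(z_i)$ and $m \asymp n/K \ge 1/\ep^2$ (using $K \le \ep^2 n$). By translation invariance and condition (\ref{cond:a}) at scale $K$, each seeding event $G_i := \{z_i \longleftrightarrow \partial \Lambda_K(z_i)\}$ has probability at least $\ep$ and depends, up to a negligible correction from balls of large radius (controlled by the finite $d$-moment and the truncation $\mu|_{[0,N]}$), only on the configuration inside $\Lambda_K(z_i)$. Approximate independence of the $G_i$ then yields $\Prb(\bigcup_i G_i) \ge 1 - (1-\ep)^m \ge 1 - \exp(-c/\ep)$. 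On each $G_i$, the point $z_i$ lies in a local cluster of diameter at least $K$; the translated version of (\ref{cond:c}) at scale $(k,K)$ rules out a second such cluster in the same region, so this local cluster must coincide with the restriction of the global macroscopic cluster $C^{\star}$ to a neighborhood of $\Lambda_K(z_i)$, giving $z_i \in S \cap C^{\star}$.

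\textbf{Main obstacle.} The delicate step is the last one: the set $S$ is arbitrary and possibly very thin, so one cannot apply (\ref{cond:b}) directly with $S$ in place of $\Lambda_k$. The two-scale structure of the hypotheses is designed exactly for this: the larger scale $(n,N)$ produces a unique global macroscopic cluster, while the smaller scale $(k,K)$---together with $K \le \ep^2 n$---provides at least $1/\ep^2$ independent chances for a local seed along $S$ to latch onto that global cluster. The Boolean-specific bookkeeping concerns balls of radius of order $K$ straddling the boxes $\Lambda_K(z_i)$ and the role of the bad-ball sets $\cB(\Lambda_K(z_i))$ in the translated assumptions; the truncation $\mu|_{[0,N]}$ together with the $d$-moment hypothesis keep these contributions small enough not to affect the final bound.
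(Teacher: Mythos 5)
Your overall strategy matches the paper's: extract $\asymp 1/\ep^2$ points along $S$ with disjoint $K$-boxes, use (\ref{cond:a}) to seed a local cluster of diameter $\ge K$ at each one, and use (\ref{cond:c}) at scale $(k,K)$ to glue. But the gluing step as you wrote it has a genuine gap. You argue that on $G_i$ the local cluster at $z_i$ crosses from $\Lambda_k(z_i)$ to $\partial\Lambda_K(z_i)$, and that $A_2(k,K)$ translated to $z_i$ forbids a \emph{second} such crossing, hence the local cluster must be $C^\star$. The two-arm bound only rules out two \emph{disjoint} crossings; to conclude the local cluster equals $C^\star$, you must first know that $C^\star$ itself produces a crossing of the annulus $\Lambda_K(z_i)\setminus\Lambda_k(z_i)$. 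Nothing in your setup forces $C^\star$ anywhere near $z_i$: uniqueness of the $\Lambda_n$-to-$\partial\Lambda_N$ crossing cluster says nothing about whether it passes through an arbitrary $k$-box along $S$.

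The paper supplies exactly the missing ingredient via a \emph{translated} use of (\ref{cond:b}): it introduces $B_i:=\{x_i+\Lambda_k\longleftrightarrow\partial\Lambda_N\}^{c}$, which has probability at most $\exp(-1/\ep)$ by translation invariance. On $E_i\setminus B_i$ one therefore has \emph{both} a local seed from $x_i$ to $x_i+\partial\Lambda_K$ \emph{and} a long-range connection from $x_i+\Lambda_k$ to $\partial\Lambda_N$ (both avoiding $x_i+\cB(\Lambda_K)$), and since both cross the $(k,K)$-annulus at $x_i$, the no-two-arm part of $E_i$ forces them to intersect inside $x_i+\Lambda_K$, yielding $x_i\longleftrightarrow\partial\Lambda_N$. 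Only after that, (\ref{cond:c}) at scale $(n,N)$ together with the square-root-trick upgrade of (\ref{cond:b}) to $\Lambda_k\longleftrightarrow F(N)$ is used to replace $\partial\Lambda_N$ by $F(N)$. In your write-up, (\ref{cond:b}) is used exactly once, at the origin, to build $C^\star$; you need to also invoke it at each $z_i$ (absorbing the $l\cdot\exp(-1/\ep)$ union-bound cost, which is fine because $l\asymp 1/\ep^2$) before the small-scale two-arm argument can close the gap.

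One smaller remark: your seed events $G_i=\{z_i\longleftrightarrow\partial\Lambda_K(z_i)\}$ are only approximately independent, which you handwave. The paper avoids this by baking the bad-ball removal $\eta\setminus(x_i+\cB(\Lambda_K))$ directly into $E_i$, which makes the $E_i$ measurable with respect to disjoint regions and hence genuinely independent; you should adopt the same device rather than relying on a ``negligible correction'' that you do not quantify.
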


\begin{proof}[Sketch of the proof of Lemma \ref{lem:connectionset}] We build $x_1,\dots,x_l\in S$ with $l\ge c_1/\ep^2$ in the same way as in the proof of Lemma 9 in \cite{DCKT} with the additional condition that the boxes $x_i+\Lambda_{2K}$ are disjoint. For an event $E$, $x+E$ denote the event translated by $x$.
Set 
\[E_i:=\left\{x_i\stackrel{\cO(\eta  \setminus (x_i+\cB(\Lambda_K)))}{\longleftrightarrow}x_i+\partial\Lambda_ K\right\}\cap(x_i+A_2(k,K))^ c\]
and
\[B_i:=\left\{x_i+\Lambda_k\stackrel{\cO(\eta \setminus (x_i+\cB(\Lambda_K)\cup\cB(\Lambda_N)))}{\longleftrightarrow}\partial \Lambda_ N\right\}^c.\]
The events $E_i$ are independent since the event $E_i$ only depends on the Poisson point process inside $x_i+\Lambda_{2K}$. 
If the event $E_i\setminus B_i$ occurs for some $i$,
then there exists a path from $x_i$ to $x_i+\partial \Lambda_K$ and a path from $x_i+\Lambda_k$ to $\partial \Lambda_N$ and these two paths do not use balls in $x_i+\cB(\Lambda_K)$. On the event $(x_i+A_2(k,K))^ c$, they must intersect inside $ x_i+\Lambda_K$. This creates a path from $x_i$ to $\partial \Lambda_N$.
Note that the square root-trick holds for Boolean percolation because we have FKG and the required symmetry.
We conclude similarly.
\end{proof}

\begin{proof}[Proof of Proposition \ref{prop:connectionset}]

The proof uses a renormalization and a sprinkling argument (similar to Section \ref{sec:GM}, at the exception that here we do not use a big ball as a seed but the path itself).

Let $q$ be large enough such that $c(q)\ge 1/2$ where $c(q)$ is defined in Lemma \ref{lem:explor}.
Fix $\beta>0$ large enough such that
\[\exp(-\beta)\le \frac{1-q}{3}.\]
Fix  $\ep_0$ small enough depending on $d$ and $q$ such that
\[2\exp\left(-\frac{c}{3\ep_0}\right)\le \frac{1-q}{3}\]
where $c$ depends only on $d$ is as in the statement of Lemma \ref{lem:connectionset}.
Thanks to this choice, we have
\begin{equation}\label{eq:condsprink}
    1-\exp(-\beta)-2\exp\left(-\frac{c}{3\ep_0}\right)\ge q.
\end{equation}
Let $\ep\in(0,\ep_0)$. Let 
$1\le k\le K\le n\le N$ such that $K\le \ep ^2 n $ and the conditions \ref{cond:a}, \ref{cond:b} and \ref{cond:c} hold.

For every $x\in\sZ^2$, set \[\Lambda_x =2Nx+\Lambda_N\subset\sR^ d\qquad\text{and}\qquad\widetilde B_x =2Nx+\mathrm B_{4\sqrt{d} N}\subset\sR^ d\]
where we identify $x=(x_1,x_2)$ with $(x_1,x_2,0,\dots,0)\in\sZ^d$.

Let $\eta$ be a Poisson point process of intensity $\lambda \mathbf{1}_{S_{4dN}}dz\otimes\mu$.  
Let $(\eta^ x, x\in \sZ^2)$ be a family of independent Poisson point process of intensity $\beta \ep dz \textbf{1}_{\widetilde B_x}\otimes \mu$.
We aim to prove that the box $\Lambda_0$ intersects an infinite component of balls with positive probability in $\cO(\eta\cup_{x\in\sZ^2}\eta^x)$.
For a configuration $\eta$ we denote by $\sC(\eta)$ the cluster of $0$ in $\eta$, that is,
\[\sC(\eta):=\bigcup \left\{\mathrm B_r^x: (x,r)\in \eta, \,0 \stackrel{\cO(\eta)}{\longleftrightarrow}\mathrm B_r^x\right\}.\]
Set $X_0:=(A_0,B_0)=(\{0\},0)$ and $\eta_0:=\eta\cap (S_{4dN}\times[0,N])$. Let $t\ge 0$. We build $\eta_{t+1}$ and $X_{t+1}$ from $\eta_t$ and $X_t$ as follows. 
If there is no edge in $(\sZ^2,\E^2)$ connecting $A_t$ to $(A_t\cup B_t) ^c$, then set $X_{t+1}:=X_t$ and $\eta_{t+1}:=\eta_t$.
Otherwise, let us denote by $x_{t}$ the extremity in $(A_t\cup B_t)^c$ of this edge (if there are several such edges, we choose one using a deterministic rule).
Define
\[\eta_{t+1}:=\eta_t \cup \eta^{x_t}\]
and
\[(A_{t+1},B_{t+1}):=\left\{\begin{array}{ll} (A_t\cup\{x_t\},B_t)&\mbox {if $ 0 \stackrel{\cO(\eta_{t+1})}{\longleftrightarrow}\Lambda_{x_t}$} \\
(A_t,B_t\cup\{x_t\})&\mbox{otherwise.}
\end{array}\right.\]
Note that \[\eta_\infty:=\bigcup_{t\ge 0}\eta_t \subset \left(\eta_0\cup \bigcup_{x\in\sZ^2}\eta^x \right)\] and if the exploration never ends then $0$ is connected to infinity in $\cO(\eta_\infty\cap S_{4dN})$.
We aim to prove that 
\begin{equation}\label{eq:goalperco}
    \Prb\left(X\text { percolates }|\,0\stackrel{\cO(\eta_{0})}{\longleftrightarrow}\partial\Lambda_{0}\right)\ge \frac{1}{2}.
\end{equation}
The conditioning ensures that $\sC(\eta_0)$ has a diameter at least $N$. Note that there exists a constant $C$ depending only on the dimension $d$ such that $\eta_\infty$ is stochastically dominated by a Poisson point process of intensity $(\lambda+C\ep)dz\otimes \mu$. If inequality \eqref{eq:goalperco} is true then $\lambda+C\ep\ge \lambda_c^ {S_{4dN}}(\mu|_{[0,N]})$ and the result follows. Let us now prove that inequality \eqref{eq:goalperco} holds.
Let us condition on $\sC(\eta_0),\dots,\sC(\eta_t)$.
We have
\begin{equation*}
\begin{split}
     \Prb&\left(\Lambda_0\stackrel{\cO(\eta_{t+1})}{\longleftrightarrow} \Lambda_{x_t}|\,\sC(\eta_0),\dots,\sC(\eta_t), \partial \Lambda_0\cap \sC(\eta_0)\ne\emptyset \right)\\
    &\hspace{5cm}=\Prb\left(\Lambda_0\stackrel{\cO(\eta_{t+1})}{\longleftrightarrow} \Lambda_{x_t}|\,\sC(\eta_t),  \partial \Lambda_0\cap \sC(\eta_t)\ne\emptyset\right) .
\end{split}
\end{equation*}
On the conditioning by $\{ \partial \Lambda_0\cap \sC(\eta_t)\ne\emptyset\}$, the cluster $\sC(\eta_t)$ has a diameter at least $N$. Using Lemma \ref{lem:connectionset}, we have
\[\Prb(\sC(\eta_t)\longleftrightarrow\Lambda_{x_t}\text{ occurs for $\eta'$} |\,\sC(\eta_t),  \partial \Lambda_0\cap \sC(\eta_t)\ne\emptyset)\ge 1-2 \exp\left(-\frac{c}{\ep}\right)\,.\]
where $\eta'$ is an independent Poisson point process of intensity $\lambda dz\otimes \mu$.
We conclude that inequality \eqref{eq:goalperco} holds using the sprinkling Lemma \ref{lem:sprinkling}, Lemma \ref{lem:explor} and inequality \eqref{eq:condsprink} (see the proof of Proposition \ref{prop:GM} in Section \ref{sec:GM}).
\end{proof}

\begin{proof}[Proof of Lemma \ref{lem:twoarm}]
Define $\cE_{n,N}$ to be the event that there exists at least two disjoint connected components in $\cO(\eta )\cap \Lambda_N$ that intersects both $\Lambda_n$ and $\partial \Lambda_N$.
We have
\begin{equation}\label{eq:a2}
    \Prb_{\lambda,\mu}(A_2(n,N))\le \Prb_{\lambda,\mu}(\cE_{n,N})+\Prb_{\lambda,\mu}(\eta\cap\cB(\Lambda_N)\ne\emptyset).
\end{equation}
Denote by $\cF$ the event that there exists at least two infinite clusters. By \cite{MeesterRoy94Uniqueness}, this event has probability $0$.
We have
\begin{equation*}
   \cF= \bigcup_{n\ge 1}\bigcap_{N\ge n}\cE_{n,N}
\end{equation*}
It follows that
\[\lim_{n\rightarrow\infty }\lim_{N\rightarrow\infty }\Prb_{\lambda,\mu}(\cE_{n,N})=0\,.\]
By Claim \ref{claim:badballs} and inequality \eqref{eq:a2}, it follows that
\[\lim_{n\rightarrow\infty }\lim_{N\rightarrow\infty }\Prb_{\lambda,\mu}(A_2(n,N))=0\,.\]
This yields the result.
\end{proof}

\appendix
\section{Inhomogeneous Talagrand formula}
In this section, we prove the inhomogeneous Talagrand formula (Proposition \ref{lem:inhomogeneous Talagrand}),
We will use the original Talagrand formula.
\begin{thm}[Talagrand formula \cite{Talagrand}]\label{thm: Talagrandformula}There exists a universal constant $C>0$ such that for any $p\in(0,1)$ for any function $f:\{0,1\}^N \rightarrow\{0,1\}$.
We have 
\[\sum_{i=1}^N \Inf_i(f)\ge \frac{C}{\log (\frac{2}{p(1-p)})}\Var(f)\log \left(\frac{1}{\max _{1\le i \le N}\Inf_i(f)}\right)\,\]
where
\[\Inf_i(f):=\mathrm P_{p}^{\otimes n}(f\circ\tau_{i}\ne f).\]
\end{thm}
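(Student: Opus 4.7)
The plan is to follow Talagrand's original Fourier-analytic proof: expand $f$ in the $p$-biased Walsh basis, control the low-degree Fourier mass via a hypercontractive inequality, and truncate the high-degree part by a trivial bound. Set $\chi_S^{(p)}(x) := \prod_{i \in S}(x_i - p)/\sqrt{p(1-p)}$, so that $\{\chi_S^{(p)}\}_{S \subset [N]}$ is an orthonormal basis of $L^2(\mathrm P_p^{\otimes N})$. Writing $f = \sum_S \hat f(S)\,\chi_S^{(p)}$, one has $\Var(f) = \sum_{S \neq \emptyset}\hat f(S)^2$; and the discrete derivative $d_i f := f_{i\to 1} - f_{i\to 0}$, a $\{-1,0,1\}$-valued function of $x_{-i}$, satisfies
\[ \Inf_i(f) \;=\; \mathbb E\bigl[(d_i f)^2\bigr] \;=\; \frac{1}{p(1-p)}\sum_{S \ni i}\hat f(S)^2, \qquad \sum_{i=1}^N \Inf_i(f) \;=\; \frac{1}{p(1-p)}\sum_S |S|\,\hat f(S)^2. \]

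Next, for a degree threshold $k$ to be optimized, the high-degree piece of the variance is bounded elementarily:
\[ \sum_{|S|>k}\hat f(S)^2 \;\leq\; \frac{1}{k+1}\sum_S |S|\,\hat f(S)^2 \;=\; \frac{p(1-p)}{k+1}\sum_i \Inf_i(f). \]
For the low-degree piece, the key input is the $p$-biased Bonami--Beckner hypercontractive inequality: there exists an increasing function $c(p) \in (0,1]$ satisfying $c(p) \asymp 1/\log\bigl(2/(p(1-p))\bigr)$ such that the noise operator $T_\rho$ (diagonal in the $\chi_S^{(p)}$-basis with eigenvalue $\rho^{|S|}$) satisfies $\|T_\rho g\|_2 \leq \|g\|_{1+\rho^2/c(p)}$ for every $\rho \in (0,1)$. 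Since $d_i f$ is $\{-1,0,1\}$-valued, $\|d_i f\|_q^q = \Inf_i(f)$ for every $q \geq 1$, so $\|T_\rho d_i f\|_2^2 \leq \Inf_i(f)^{2/(1+\rho^2/c(p))}$. Summing in $i$, factoring $\Inf_i(f)^{2/(1+\rho^2/c(p))} \leq \Inf_i(f)\cdot M^{(1-\rho^2/c(p))/(1+\rho^2/c(p))}$ with $M := \max_j \Inf_j(f)$, and recognizing the Fourier expression of the left-hand side gives
\[ \frac{1}{p(1-p)}\sum_S |S|\,\rho^{2(|S|-1)}\,\hat f(S)^2 \;=\; \sum_i \|T_\rho d_i f\|_2^2 \;\leq\; M^{(1-\rho^2/c(p))/(1+\rho^2/c(p))} \sum_i \Inf_i(f), \]
whose left-hand side dominates $\frac{\rho^{2(k-1)}}{p(1-p)}\sum_{1\leq |S|\leq k}\hat f(S)^2$, thereby controlling the low-degree Fourier mass.

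Combining the two pieces produces an estimate of the shape $\Var(f) \lesssim p(1-p)\bigl[\rho^{-2(k-1)}M^{(1-\rho^2/c(p))/(1+\rho^2/c(p))} + \frac{1}{k+1}\bigr]\sum_i \Inf_i(f)$, and the conclusion follows by optimizing: pick $\rho^2/c(p)$ equal to a small absolute constant (so the exponent of $M$ stays bounded away from $0$ and $\log(1/\rho) \asymp \log\bigl(2/(p(1-p))\bigr)$), then choose $k$ of order $\log(1/M)/\log(1/\rho^2)$ to balance the two error terms. Rearranging yields $\sum_i \Inf_i(f) \gtrsim \frac{1}{\log(2/(p(1-p)))}\,\Var(f)\log(1/M)$, which is the claimed inequality. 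The main obstacle is to secure the correct $p$-biased hypercontractive input: the two-point space $\{0,1\}$ with $p$-biased measure has log-Sobolev constant $\alpha_{\mathrm{LS}}(p) \asymp 1/\log\bigl(1/\min(p,1-p)\bigr)$ (Latała--Oleszkiewicz), and one must integrate this along the Markov semigroup and tensor over the $N$ coordinates to obtain the quantitative form of $\|T_\rho g\|_2 \leq \|g\|_{1+\rho^2/c(p)}$ used above. The degradation of $c(p)$ as $p$ approaches $0$ or $1$ is precisely what produces the $\log\bigl(2/(p(1-p))\bigr)$ factor in the statement, and tracking it carefully through the Fourier truncation is the delicate part of the argument.
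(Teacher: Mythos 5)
The paper does not prove this statement; it is imported directly from Talagrand's paper \cite{Talagrand}. Your blind proof follows exactly the strategy of Talagrand's original argument — $p$-biased Fourier expansion, a trivial bound on the high-degree mass, hypercontractivity on the low-degree mass applied to the discrete derivatives $d_if$, then optimization over the truncation level — so the overall structure is right. However, the hypercontractive input as you have stated it is false, and the error propagates to an internal inconsistency in your final optimization.

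You claim $\|T_\rho g\|_2 \le \|g\|_{1+\rho^2/c(p)}$ with $c(p)\asymp 1/\log(2/(p(1-p)))$. Test this on the single-coordinate indicator $g=\ind_{x=1}$: one has $\|T_\rho g\|_2^2=p^2+\rho^2 p(1-p)$ while $\|g\|_q^2=p^{2/q}$, so the claimed inequality with $q=1+\rho^2/c(p)$ fixed at, say, $11/10$ forces $\rho^2\lesssim p^{1-2/q}=p^{2/11}$, which is exponentially smaller than your $\rho^2\asymp c(p)\asymp 1/\log(1/p)$. The claimed inequality therefore fails for small $p$. The correct shape of $p$-biased hypercontractivity is $\|T_\rho g\|_2\le\|g\|_q$ with $q-1 = \rho^{\,2\alpha(p)}$, where $\alpha(p)\asymp 1/\log(2/(p(1-p)))$ is the log-Sobolev constant (Gross's theorem integrated along the semigroup; sharp constants go back to Mueller--Weissler / Latała--Oleszkiewicz). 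The crucial difference is that $\alpha(p)$ enters as an \emph{exponent} of $\rho$, not as a multiplier of $\rho^2$. Equivalently, for a \emph{fixed} $q=4/3$ the admissible $\rho$ decays like a fixed power of $p(1-p)$, so that $\log(1/\rho)\asymp\log\bigl(2/(p(1-p))\bigr)$. Notice that your own claim $\log(1/\rho)\asymp\log\bigl(2/(p(1-p))\bigr)$ does \emph{not} follow from the inequality you wrote: with $\rho^2\asymp c(p)$ one gets $\log(1/\rho)\asymp\log\log\bigl(2/(p(1-p))\bigr)$, which is the wrong order and would produce a false, over-strong conclusion. Once the hypercontractive lemma is stated correctly, your choice $q=4/3$ gives $\sum_i\|T_\rho d_if\|_2^2\le M^{1/2}\sum_i\Inf_i(f)$, and setting $k\asymp\log(1/M)/\log(1/\rho^2)$ to balance the two error terms yields $\Var(f)\lesssim p(1-p)\,\frac{\log(2/(p(1-p)))}{\log(1/M)}\sum_i\Inf_i(f)$, which rearranges to the stated bound. (One also needs the standard caveat: when $M\gtrsim\rho$ the truncation level is $0$ and one falls back to the Poincaré bound $\Var(f)\le p(1-p)\sum_i\Inf_i(f)$, which already implies the statement.) So the right key lemma is the corrected form of biased hypercontractivity; the rest of your outline then goes through.
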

\begin{proof}[Proof of Proposition \ref{lem:inhomogeneous Talagrand}] Fix $N\ge 1$. Fix $f:\{0,1\}^N\rightarrow \{0,1\}$ a boolean function. It is easy to check that the function $\Inf_i(f)$ is a polynomial function of the family $(p_1,\dots,p_N)$. As a result, the map 
$(p_1,\dots,p_N)\mapsto (\Inf_1 ^{p_1,\dots,p_N}(f),\dots,\Inf_N ^{p_1,\dots,p_N}(f))$ is continuous.
Hence, it is enough to check that the inequality holds only for $p_1,\dots,p_N$ that belong to the set $$E:=\{m2^{-\ell}: \ell\ge 1, \,m\le 2^\ell-1\}$$ which is dense in $[0,1]$.

From now on, we will assume that $p_1,\dots,p_N\in E$ and write $p_i=m_i 2^{-\ell_i}$ for all $1\le i \le N$.
Let $1\le i \le N$. We set 
\[\mathrm P:=\otimes_{i=1}^N\mathrm P_{1/2}^{\otimes \ell_i}.\] Let $(X_{i,1},\dots,X_{i,\ell_i})$ be $\ell_i$ independent Bernoulli of parameter $1/2$.
Let us define 
\[\pi(X_{i,1},\dots,X_{i,\ell_i}):=\sum_{k=1}^{\ell_i}\frac{X_{i,k}}{2^{k}}.\]
It is easy to check that $\pi(X_{i,1},\dots,X_{i,\ell_i})$ is uniform on $\{k2^{-\ell_i}: 0\le k\le 2^{\ell_i}-1\}$ and 
\[\mathrm P(\pi(X_{i,1},\dots,X_{i,\ell_i})\ge 1- m_i2^{-\ell_i})=m_i2^{-\ell_i}.\]
Set $Y_i:=\mathbf{1}_{\pi(X_{i,1},\dots,X_{i,\ell_i})\ge 1- p_i}$.
It follows that 
$(Y_i)_{1\le i \le N}$ has the distribution of $\otimes_{i=1}^N \mathrm {Ber}(p_i)$.
Set \[\widetilde f (X_{1,1},\dots, X_{1,\ell_1},\dots,X_{N,1},\dots,X_{N,\ell_N}):= f(Y_1,\dots,Y_N).\]
It is easy to check that $\Var(f)=\Var(\widetilde f)$.
Let us now compute the influence of $X_{i,j}$ on $Y_i$ for $1\le j \le \ell_i$.
We define $\sigma_j ^1$ (respectively $\sigma_j^0$ the function that for $x=(x_1,\dots,x_{\ell_i})\in\{0,1\}^{\ell_i}$ changes $x_j$ into $1$ (respectively into $0$).
Set \[\Delta_j \pi=\pi\circ \sigma_j ^1-\pi\circ \sigma_j ^0.\]
We have
\[\Inf_{i,j}(\widetilde f)= \mathrm P(\Delta_j\pi(X_{i,1},\dots,X_{i,\ell_i})\ne 0) \Inf_i(f).\]
Let us now compute $ \mathrm P(\Delta_j\pi(X_{i,1},\dots,X_{i,\ell_i})\ne 0)$.
We have 
\[\left\{\Delta_j\pi(X_{i,1},\dots,X_{i,\ell_i})\ne 0\right\}= \left\{\sum_{k=1, k\ne j}^{\ell_i}\frac{X_{i,k}}{2^{k}}\le 1- p_i, \, \sum_{k=1, k\ne j}^{\ell_i}\frac{X_{i,k}}{2^{k}}+\frac{1}{2^j}\ge 1-p_i\right\}.\]
Let $j_i\ge 1$ be the largest integer such that \[2^{-j_i}\ge p_i.\]
We distinguish two cases.
Let us first assume that $j\ge j_i$.
\[\{\Delta_j\pi(X_{i,1},\dots,X_{i,\ell_i})\ne 0\}\subset  \left\{\sum_{k=1}^{j-1}\frac{X_{i,k}}{2^{k}}\le 1- p_i, \, \sum_{k=1}^{j-1}\frac{X_{i,k}}{2^{k}}\ge 1-p_i- \frac{1}{2^{j-1}}\right\}\]
It follows that 
\[\mathrm P(\Delta_j\pi(X_{i,1},\dots,X_{i,\ell_i})\ne 0)\le \frac{1}{2^{j-1}}.\]
This rough upper-bound also holds for $ j<j_i$, but we aim here at obtaining a better upper-bound in that case.
Let us now assume that $j< j_i$. We have
\begin{equation}\label{eq:inclusion}
\{\Delta_j\pi(X_{i,1},\dots,X_{i,\ell_i})\ne 0\}=  \left\{\sum_{k=1}^{j-1}\frac{X_{i,k}}{2^{k}}\le 1- p_i, \, \sum_{k=1}^{j-1}\frac{X_{i,k}}{2^{k}}\ge 1-p_i- \frac{1}{2^{j}}-\frac{1}{2^j}\sum_{k=1}^{\ell_i-j}\frac{X_{i,k+j}}{2^k}\right\}.
\end{equation}
Note that we have almost surely
\[\sum_{k=1}^{j-1}\frac{X_{i,k}}{2^{k}}\le 1-\frac{1}{2^{j-1}}.\]
Hence, for the event on the right hand side of \eqref{eq:inclusion} to be non-empty, we need that 
\[1-p_i- \frac{1}{2^{j}}-\frac{1}{2^j}\sum_{k=1}^{\ell_i-j}\frac{X_{i,k+j}}{2^k}\le  1-\frac{1}{2^{j-1}}.\]
That is
\[\sum_{k=1}^{\ell_i-j}\frac{X_{i,k+j}}{2^k}\ge 1- 2^j p_i.\]
It follows that
\begin{equation*}
\begin{split}
\mathrm P&(\Delta_j\pi(X_{i,1},\dots,X_{i,\ell_i})\ne 0)\\
&\qquad\le \mathrm P\left(1-p_i-\frac{1}{2^{j-1}}\le\sum_{k=1}^{j-1}\frac{X_{i,k}}{2^{k}}\le 1- p_i\right) \mathrm P\left(\sum_{k=1}^{\ell_i-j}\frac{X_{i,k+j}}{2^k}\ge 1- 2^j p_i\right)\\
&\qquad\le\frac{1}{2^{j-1}}2^j p_i\le 2 p_i.
\end{split}
\end{equation*}
Finally, we have
\begin{equation*}
\sum_{j=1}^{\ell_i}\mathrm P(\Delta_j\pi(X_{i,1},\dots,X_{i,\ell_i})\ne 0)\le \sum_{j=1}^{j_i-1}2 p_i +\sum_{j=j_i}^{\ell_i}\frac{1}{2^{j-1}}\le 4 p_i |\log p_i|
\end{equation*}
and
\[\sum_{j=1}^{\ell_i}\Inf_{i,j}(\widetilde f)\le 4 p_i |\log p_i|\Inf_i(f).\]
Let us now apply Talagrand formula (Theorem \ref{thm: Talagrandformula})  to $\widetilde f$, we obtain
\[\sum_{i=1}^N\sum_{j=1}^{\ell_i}\Inf_{i,j}(\widetilde f)\ge C\Var(\widetilde f)\log \left(\frac{1}{\max\{ \Inf_{i,j}(\widetilde f): 1\le i \le N , 1\le j \le \ell_i\}}\right)\]
and
\[\sum_{i=1}^Np_i|\log p_i|\Inf_{i}(f)\ge C\Var(f)\log \left(\frac{1}{\max\{ p_i\Inf_{i}(f): 1\le i \le N \}}\right).\]
The result follows.

\end{proof}
\bibliographystyle{plain}
\bibliography{biblio}
\end{document}